\documentclass[11pt]{article}

\usepackage[T1]{fontenc}
\usepackage{lmodern}
\usepackage{microtype}
\usepackage[margin=1in]{geometry}
\usepackage{amsmath,amssymb,amsthm,mathtools}
\usepackage[hidelinks]{hyperref}
\usepackage{bm}

\numberwithin{equation}{section}
\allowdisplaybreaks

\newtheorem{theorem}{Theorem}[section]
\newtheorem{proposition}[theorem]{Proposition}
\newtheorem{lemma}[theorem]{Lemma}
\newtheorem{corollary}[theorem]{Corollary}
\newtheorem{conjecture}[theorem]{Conjecture}
\theoremstyle{remark}
\newtheorem{remark}[theorem]{Remark}

\DeclareMathOperator{\Var}{Var}
\DeclareMathOperator{\Gap}{Gap}
\DeclareMathOperator{\supp}{supp}
\newcommand{\E}{\mathbb{E}}
\newcommand{\Pp}{\mathbb{P}}
\newcommand{\1}{\mathbf{1}}
\newcommand{\cE}{\mathcal{E}}
\newcommand{\Om}{\Omega}
\newcommand{\mun}{\mu_n}
\newcommand{\tmun}{\widetilde{\mu}_n}
\newcommand{\tPn}{\widetilde{P}_n}
\newcommand{\ip}[2]{\langle #1,#2\rangle}

\newcommand{\N}{\mathbb N}
\newcommand{\Pbb}{\mathbb P}
\newcommand{\tP}{\widetilde P}

\title{Conditionally Resampled Sliding-Window Count Kernels: Spectral-Gap Bounds and Poincar\'e Inequalities}
\author{
Yanjin Xiang\textsuperscript{a,$\dagger$},
Yuchen Xin\textsuperscript{a,$\dagger$},
Zhihua Zhang\textsuperscript{a,$\dagger$,*}\\[1ex]
\textsuperscript{a}
School of Mathematical Sciences, Peking University,\\ 5 Yiheyuan Road, Haidian District, Beijing 100871, China
}
\date{}

\begin{document}
\maketitle

\begingroup
\renewcommand{\thefootnote}{\fnsymbol{footnote}}
\footnotetext[2]{The authors are listed in alphabetical order.\\
\texttt{2401110086@stu.pku.edu.cn} (Y. Xiang); \\ \texttt{2301110087@pku.edu.cn} (Y. Xin); \\ \texttt{zhzhang@math.pku.edu.cn} (Z. Zhang).
}
\footnotetext[1]{Corresponding author.}
\endgroup

\begin{abstract}%
We study the conditionally resampled sliding-window count kernel associated with the empirical counts of length-$n$ windows from a stationary finite-state reversible Markov chain. Although the resulting count process is generally not Markov, its stationary one-step conditional law defines a genuine Markov kernel. For every fixed strictly positive reversible kernel \(P\) on a finite state space, we present a Poincaré inequality for the induced
count kernel $\tP_n$ of length $n$. In other words, we derive the lower bound of the spectral gap $\Gap(\tP_n)$ of $\tP_n$ as 
\[
\Gap(\tP_n)\ge \frac{c(P)}{n},
\]
where \(c(P)>0\) depends only on \(P\).  The proof combines a martingale oscillation inequality for the stationary path law with a direct comparison of coordinate oscillations to the Dirichlet form of the count kernel. A linear statistic of the count vector gives the matching \(O(1/n)\) upper bound, so for every fixed strictly positive reversible \(P\) one has \(\Gap(\tP_n)=\Theta_P(1/n)\). The resulting count-space Poincaré inequality yields a local-to-global variance bound for finite-window count statistics and, together with a general matrix-concentration principle, operator-norm concentration for matrix-valued empirical averages.

\end{abstract}

\noindent\textbf{Keywords.}
Reversible Markov chains; spectral gap; Poincar\'e inequality; occupation measures; empirical counts; projected Markov kernels; matrix concentration.

\noindent\textbf{MSC 2020.}
60J10, 60E15, 60J35.

\newpage
\tableofcontents
\newpage

\section{Introduction}

Reversible Markov chains are fundamental stochastic processes that are widely used in statistics, machine learning, data science, and theoretical computer science. The convergence rate of Markov chains is important in related problems. Spectral gap is a key quantity that measures the rate at which the chain converges to its stationary distribution~\cite{levin2026markov,roch2024Book}. 
In this work, we  introduce a reversible Markov kernel on the empirical count space associated with length-$n$ stationary paths. We would study the spectral gap of this induced Markov chain. For this purpose, we develop Poincar\'e inequalities with respect to this induced Markov chain.
The meaning of the problem is that the spectral gap of the induced Markov chain may reflect some properties of the length-$n$ path walk of a Markov chain.

Given a finite state space $\Omega$, let  \(P=(p(x,y))_{x,y\in\Om}\) be a reversible Markov chain on $\Omega$.  For a length-$n$ stationary path
\[
(X_1,\ldots,X_n),
\]
we may project the path to its empirical count vector $N= (N_a)_{a\in\Omega}$. We  shift the path window from
\[
(X_1,\ldots,X_n)
\quad\text{to}\quad
(X_2,\ldots,X_{n+1})
\]
and then project them to counts, giving rise to a stationary process
\[
Y_t :=N(X_{t+1},\ldots,X_{t+n})
\]
on the finite simplex of count vectors. This projected process is generally not Markov, because the count vector does not retain enough information about the order of the hidden path. Functions of Markov chains are not Markov in general; classical conditions ensuring Markovianity are formulated in terms of lumpability or Markov functions; see \cite{KemenySnell,rogers1981markov,burke1958markovian}. Throughout this paper, we study instead the stationary one-step count kernel
\[
\tP_n(\eta,\xi) :=\Pbb(Y_1=\xi\mid Y_0=\eta),
\]
on the set of attainable count vectors, with stationary distribution given by the law of $Y_0$.

The question is to estimate the spectral gap of this one-step count kernel with respect to the window length \(n\).  The problem is nontrivial because the projection loses the order information in the path.  When \(|\Om|=2\), the count space is one-dimensional and the induced kernel is a birth-death chain.  When \(|\Om|\ge3\), the count space is a higher-dimensional simplex and the birth-death structure disappears.

The main contribution of this work is to prove a \(P\)-dependent lower bound for an arbitrary finite state space $\Omega$, under the clean positivity assumption
\[
p_*:=\min_{x,y\in\Om}p(x,y)>0.
\]
The positivity assumption is not needed to define the count kernel or to establish its reversibility and irreducibility on the support of its stationary law. It is used only in the proof of the quantitative spectral-gap lower bound, where it provides the uniform contraction and path-comparison estimates needed in the argument.


\subsection*{Background, main statement, and proof strategy}

The main result gives an explicit, although non-optimal, positive constant.  Let \(m=|\Om|\), write \(\pi\) for the stationary distribution of \(P\), and 
let $\mu_n(u_1, \ldots, u_n)=\pi(u_1) \prod_{i=1}^{n-1}p(u_i, u_{i+1})$ for $(u_1, \ldots, u_n) \in \Omega^{n}$. The Dobrushin contraction coefficient of the transition kernel
is defined as
\[
\delta(P)=
\sup_{x,y\in\Om}
\frac12\sum_{z\in\Om}|p(x,z)-p(y,z)|.
\]
This is the conventional Dobrushin, or total-variation contraction, coefficient of the transition kernel; see Dobrushin~\cite{Dobrushin1970} and the coupling formulation in Levin--Peres--Wilmer~\cite[Chapter~4]{levin2026markov}.  Equivalently, \(\delta(P)\) is the worst one-step total-variation distance between two rows of \(P\), and it controls the disagreement probability in the standard maximal-coupling construction used below.
Positivity $p_*>0$ implies \(\delta(P)<1\).  With
the notation, we derive the following bound.

\begin{theorem}[Spectral-gap lower bound for positive finite chains]
\label{thm:main}
Let \(P\) be a reversible Markov kernel on a finite state space \(\Om\) with \(m=|\Om|\ge2\), and assume \(p_*=\min_{x,y}p(x,y)>0\).  Then, for every \(n\ge2\), we have
\[
\Gap(\tP_n)
\ge
\frac{p_*^4(1-\sqrt{\delta(P)})^2}{4}\,\frac1n .
\]
\end{theorem}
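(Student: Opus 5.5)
The plan is to prove a Poincaré inequality $\Var_{\tmun}(f)\le \frac{4n}{p_*^4(1-\sqrt{\delta(P)})^2}\,\cE_{\tP_n}(f,f)$ for every $f$ on count space, where $\tmun$ is the law of $Y_0=N(X_1,\dots,X_n)$ under $\mun$. We lift $f$ to path space by setting $F:=f\circ N$ on $\Om^n$, so that $\Var_{\tmun}(f)=\Var_{\mun}(F)$. There are two steps. First, bound $\Var_{\mun}(F)$ by a sum of single-coordinate oscillations. Second, bound each such oscillation by the Dirichlet form of $\tP_n$.

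\emph{Step 1: martingale oscillation inequality.} Let $\mathcal F_k=\sigma(X_1,\dots,X_k)$ and $D_k=\E[F\mid\mathcal F_k]-\E[F\mid\mathcal F_{k-1}]$, so that $\Var(F)=\sum_k\E D_k^2$. Write $\operatorname{osc}_j F$ for the largest change of $F$ when only coordinate $j$ is altered. Couple two copies of the chain started from $X_k=x$ and $X_k=x'$ by iterated maximal coupling. Then the copies disagree at time $k+\ell$ with probability at most $\delta(P)^\ell$, and this gives
\[
|D_k|\le \sum_{\ell\ge0}\delta(P)^\ell\,\overline{\operatorname{osc}}_{k+\ell}F .
\]
Here $\overline{\operatorname{osc}}$ is a conditional (averaged) oscillation rather than a sup, so that we stay in $L^2$. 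By Cauchy--Schwarz with weights $\delta^{\ell/2}$,
\[
(\textstyle\sum_\ell \delta^\ell a_\ell)^2\le(\sum_\ell\delta^{\ell/2})(\sum_\ell\delta^{3\ell/2}a_\ell^2),
\]
and summing over $k$ gives $\Var(F)\le(1-\sqrt\delta)^{-2}\sum_j\E[\text{local oscillation}_j^2]$. The factor $(1-\sqrt\delta)^{-2}$ arises exactly this way.

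To keep everything in $L^2(\mun)$, we will measure the local oscillation at $j$ as $\sum_{y}\big(F(x^{j\to y})-F(x)\big)^2$ weighted appropriately. Changing the disagreement event's conditional law uses density ratios of $\mun$ under single-site resampling, and these ratios are bounded by $p_*^{-2}$ (one factor from each neighbouring transition). The result of Step 1 is
\[
\Var_{\mun}(F)\le \frac{C_1}{p_*^2(1-\sqrt\delta)^2}\sum_{j=1}^n \E_{\mun}\Big[\sum_y \big(F(X^{j\to y})-F(X)\big)^2 p(\cdot)\Big].
\]

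\emph{Step 2: comparison with $\cE_{\tP_n}$.} Replacing $X_j=a$ by $y$ changes the count by $N\mapsto N-e_a+e_y$. We must realize this move through the count kernel, whose moves are exactly $\eta\mapsto\eta-e_{X_1}+e_{X_{n+1}}$ with conditional weights. For $\eta$ fixed, $\tmun(\eta)\tP_n(\eta,\eta-e_a+e_y)=\Pbb(N(X_{1:n})=\eta,X_1=a,X_{n+1}=y)$. We will compare this with the sum over $j$ of $\Pbb(N(X)=\eta,\ X_j=a)\cdot p(\text{resample to }y)$.

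The comparison uses an exchangeability-type path surgery. Take the path, cut out the site $j$ carrying $a$, and rotate so that $a$ sits at the front and $y$ is appended at the end. This map $\Om^n\to\Om^{n+1}$ is at most $n$-to-one (choice of $j$). It alters at most two transitions, so by positivity it changes path weights by a factor of at most $p_*^{-2}$. Hence
\[
\sum_j \E\big[(F(X^{j\to y})-F(X))^2\,\1\{X_j=a\}\big]\le \frac{n}{p_*^2}\sum_\eta \tmun(\eta)\tP_n(\eta,\eta-e_a+e_y)\big(f(\eta-e_a+e_y)-f(\eta)\big)^2 .
\]
Summing over $a,y$ gives $\le \frac{2n}{p_*^2}\cE_{\tP_n}(f,f)$. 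Combining with Step 1 yields the stated $\Gap\ge p_*^4(1-\sqrt\delta)^2/(4n)$, after tracking constants to $C_1=2$.

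\emph{Main obstacle.} The hard part is Step 2's surgery. Removing $X_j$ joins $X_{j-1}$ to $X_{j+1}$, and appending $y$ adds a transition. We must verify that the multiplicity and weight distortion are uniformly bounded by $n\,p_*^{-2}$ independently of $\eta$, including the boundary cases $j=1,n$ and the stationary initial factor $\pi$, which is bounded below by $p_*$. We also need Step 1 to be phrased with $L^2$ oscillations rather than sup-oscillations, since a sup would not compare to a Dirichlet form. I would first try defining the oscillation via the Gibbs resampling at site $j$ and using the coupling only to control how far a single-site difference propagates through later conditional expectations.
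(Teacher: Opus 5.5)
Your architecture is the paper's. You lift to $F=f\circ N$, bound $\Var_{\mun}(F)$ by coordinate oscillations through the Doob martingale and iterated maximal coupling, then compare coordinate changes with the endpoint Dirichlet form by a rotation and sum over the $n$ positions. Both of your intermediate inequalities are in fact true with your constants: $2/p_*^2$ in Step 1 (with weight $p(X_{j-1},y)$, $X_0:=X_n$) and $2n/p_*^2$ in Step 2. However, the mechanisms you give for them are not the ones that work, so the plan does not yet prove the stated constant.

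In Step 1, a bound $\sum_\ell\delta^\ell\,\overline{\operatorname{osc}}_{k+\ell}F$ holds only if $\overline{\operatorname{osc}}$ is an average conditional on the disagreement event, and its second moment is not controlled by $\E_{\mun}[(\operatorname{osc})^2]$. The weight $\delta^\ell$ survives only for worst-case global oscillations, which is the correct form of your worry about sups. What works is
\[
\E\bigl[\1_{\{U^x_j\ne U^{x'}_j\}}O_j\bigr]\le \Pp\bigl(U^x_j\ne U^{x'}_j\bigr)^{1/2}\|O_j\|_2\le \delta^{(j-k)/2}\|O_j\|_2 .
\]
So each step contributes only $\alpha=\sqrt\delta$, and Cauchy--Schwarz in $j$ followed by summation over $k$ gives $(1-\alpha)^{-2}$.

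The missing idea is the law under which $\|O_j\|_2$ is taken. The telescoping $F(U^x)-F(U^{x'})=\sum_{j\ge k}[F(V^{(j-1)})-F(V^{(j)})]$ passes through hybrid paths: the $x'$-copy up to time $j-1$, then the $x$-copy from time $j$ on, with $O_j=\operatorname{osc}_jF(V^{(j-1)})$. These hybrid paths do not have a conditional $\mun$-law. Their law must be dominated pointwise by $p_*^{-1}\mun(\cdot\mid X_{1:k-1}=w,\,X_k=x')$, because exactly one transition factor $p(v_{j-1},v_j)$ is absent. This costs $p_*^{-1}$, not a single-site resampling ratio $p_*^{-2}$.

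Local one-coordinate sup-oscillations cause no trouble, and Gibbs resampling is not needed. Its weights $\propto p(u_{j-1},z)p(z,u_{j+1})$ do not match the endpoint kernel and would cost extra powers of $p_*$. Keep $\operatorname{osc}_j$ in Step 1 and convert pointwise. Let $h(z)=f(N(u)-e_{u_j}+e_z)$. The maximizer and minimizer of $h$ each have mass at least $p_*$ under $p(u_{j-1},\cdot)$, and $h(u_j)=f(N(u))$. Hence $(\operatorname{osc}_jF(u))^2\le \frac{2}{p_*}\sum_z p(u_{j-1},z)\bigl(h(z)-h(u_j)\bigr)^2$. This is where $C_1=2$ and the second $p_*^{-1}$ actually come from.

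In Step 2, do not cut out $X_j$. The pure cyclic rotation $\theta_ju=(u_j,\dots,u_n,u_1,\dots,u_{j-1})$ preserves $N$ and puts $u_j$ first and $u_{j-1}$ last, so $p(u_{j-1},\cdot)$ is exactly the endpoint row. Also $\mun(u)/\mun(\theta_ju)=\pi(u_1)p(u_{j-1},u_j)/\bigl(\pi(u_j)p(u_n,u_1)\bigr)\le p_*^{-2}$ since $\pi\ge p_*$. With this rotation your ``main obstacle'' disappears.

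Removing $X_j$ and joining $X_{j-1}$ to $X_{j+1}$ instead puts $\pi(a)$, $p(a,X_1)$ and $p(X_{j-1},X_{j+1})$ in the denominator, a loss of $p_*^{-3}$. It also appends $y$ from $p(X_n,\cdot)$ rather than $p(X_{j-1},\cdot)$. Likewise, the unweighted left side of your displayed Step 2 inequality would need one more factor $p_*^{-1}$.
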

Together with the linear-statistic upper bound in Section~\ref{sec:upper-bounds}, Theorem~\ref{thm:main} determines the $n$-dependence of the count-space spectral gap for every fixed strictly positive reversible kernel $P$:
\[
\Gap(\tP_n)=\Theta_P(n^{-1}).
\]
The explicit lower-bound constant is not claimed to have optimal dependence on $P$.


\paragraph{Proof strategy.}
Our proof has two main steps. We first regard a function of the count
vector as a function of the underlying ordered path. By revealing the
path from left to right and coupling two possible continuations, we
bound its variance under the stationary path law by the sum of the
squared changes caused by modifying one coordinate at a time. The
Dobrushin coefficient controls how the effect of such a modification
propagates along the path.

We then compare each one-coordinate change with the endpoint update
that defines the Dirichlet form of $\widetilde P_n$. A cyclic rotation
moves the chosen coordinate to the left endpoint, so that the
corresponding replacement can be compared directly with the endpoint
update appearing in the Dirichlet form. Summing over the $n$ possible
coordinates produces the factor $n$ in the Poincaré inequality.
Strict positivity provides the uniform bounds needed in these
comparisons.


The variance estimate for the stationary path law is related to
Dobrushin-type covariance and Poincaré estimates for Gibbs measures;
see, for example, Föllmer~\cite{follmer1982covariance} and
Wu~\cite{Wu2006}. The comparison of coordinate changes with the
endpoint Dirichlet form is in the spirit of comparison methods for
reversible Markov chains~\cite{diaconis1993comparison}. In our setting, however,
these results are not applied directly: we prove the path-space
variance estimate and then compare the coordinate changes directly
with the endpoint Dirichlet form of $\widetilde P_n$.


Beyond the proof itself, the count-space viewpoint connects the result
with occupation measures and finite-window statistics. The count
vector records the occupation counts of a stationary length-$n$
window, and its normalization by $n$ is the empirical measure of that
window. Empirical measures of Markov chains are classical objects in
large-deviation theory; see, for example, Donsker and
Varadhan~\cite{donsker1975asymptotic} and
Ellis~\cite{ellis1988large}. These works mainly concern
distributional or asymptotic properties of the counts, whereas our
object is the Poincaré spectral gap of the stationary one-step
resampling kernel on the count space.

The spectral-gap bound also yields a local-to-global variance
inequality for finite-window count statistics. It controls the
variance of a statistic of a stationary length-$n$ count vector by
its mean squared change under one actual shift of the window. Under
strict positivity, Theorem~\ref{thm:main} shows that the
prefactor in this variance bound is of order $n$, up to a constant
depending on $P$. From this functional-inequality viewpoint, the
result is analogous in spirit to estimates for Glauber, Kawasaki,
and conservative particle dynamics
\cite{lu1993spectral,quastel1992diffusion}. The dynamics considered
here is nevertheless different: the hidden path ordering is averaged
out, and the count update is inherited from a sliding-window endpoint
replacement rather than from a local spatial update.

Section~\ref{sec:matrix-concentration} applies the count-space Poincaré inequality to
matrix-valued empirical averages. Combining this scalar inequality
with the nonlinear matrix-concentration principle of Huang and
Tropp~\cite{huang2021poincare} yields operator-norm concentration and
an expectation bound on the $n^{-1/2}$ scale, up to logarithmic
dependence on the matrix dimension. Related matrix Poincaré methods
were developed by Aoun, Banna, and Youssef
\cite{aoun2020matrix}. This route is complementary to direct
concentration inequalities for matrix-valued sums along Markov
trajectories
\cite{qiu2020matrix,neeman2024concentration}.

\subsection*{The Paper Organization}

The remainder of this paper is organized as follows. Section~\ref{sec:preliminaries} gives preliminaries that are used in the paper. In Section~\ref{sec:count-kernel}, we define the resampled count kernel, establish its basic properties, and state the main Poincaré inequality. Section~\ref{sec:proof-lower-bound} gives the proof details of our main result on the lower bound (Theorem~\ref{thm:main-gap-lower-bound}).  Section~\ref{sec:upper-bounds}
gives a matching-order upper bound of interest. We discuss a special i.i.d.\ case in which the stationary chain is a multivariate Ehrenfest chain in Section~\ref{sec:iid-ehrenfest}, and consider a two-state case in Section~\ref{sec:two-state}.
Section~\ref{sec:matrix-concentration} illustrates applications of the count-space Poincaré inequality in matrix-valued empirical averages.
Further discussions are proposed in Section~\ref{sec:comparison-problem}.

\section{Notation and spectral-gap convention}
\label{sec:preliminaries}

All state spaces in the paper are finite.  If $\rho$ is a probability measure on a finite set $S$ and $f,g:S\to\mathbb R$, write
\[
\rho(f):=\sum_{x\in S}\rho(x)f(x),
\qquad
\ip{f}{g}_{\rho}:=\sum_{x\in S}\rho(x)f(x)g(x),
\]
and
\[
\Var_{\rho}(f)
:=\sum_{x\in S}\rho(x)\bigl(f(x)-\rho(f)\bigr)^2.
\]

Let $K$ be a Markov kernel that is reversible with respect to $\rho$.  Its Dirichlet form is
\begin{align}
\cE_K(f,g)
&:=\ip{f}{(I-K)g}_{\rho} \notag\\
&=\frac12\sum_{x,y\in S}\rho(x)K(x,y)
\bigl(f(y)-f(x)\bigr)\bigl(g(y)-g(x)\bigr).
\label{eq:prelim-dirichlet-form}
\end{align}
We use the \emph{right}, or Poincar\'e, spectral gap
\begin{equation}
\Gap(K)
:=
\inf_{\Var_{\rho}(f)>0}
\frac{\cE_K(f,f)}{\Var_{\rho}(f)}.
\label{eq:prelim-gap}
\end{equation}
For an irreducible reversible kernel, this equals $1-\lambda_2(K)$, where the eigenvalues are ordered as
\[
1=\lambda_1(K)>\lambda_2(K)\ge\cdots\ge -1.
\]
Throughout the paper, $\Gap(K)$ never denotes the absolute spectral gap.

If $(Z_0,Z_1)$ is a stationary one-step pair for $K$, then
\begin{equation}
\cE_K(f,f)
=
\frac12\E\bigl[(f(Z_1)-f(Z_0))^2\bigr].
\label{eq:prelim-stationary-pair}
\end{equation}
We shall also use the elementary path-reversal consequence of reversibility: if $(Z_t)_{t\ge 0}$ is stationary and reversible, then for every $r\ge 1$,
\begin{equation}
(Z_0,Z_1,\ldots,Z_r)
\overset{d}{=}
(Z_r,Z_{r-1},\ldots,Z_0).
\label{eq:prelim-path-reversal}
\end{equation}

\section{The conditionally resampled sliding-window count kernel}
\label{sec:count-kernel}

This section gives a self-contained definition of the count-space kernel
studied in the paper.  The construction has three ingredients: a stationary
law on ordered path segments, projection to empirical counts, and conditional
resampling of the hidden ordering before each sliding-window update.  We also
distinguish the resulting Markov chain from the original sliding-window count
process, which is generally not Markov.

\subsection{Stationary path law and empirical counts}
\label{subsec:path-law-counts}

Let $\Om$ be a finite state space with $m:=|\Om|\ge 2$, and let $P=(p(x,y))_{x,y\in\Om}$ be an irreducible Markov kernel that is reversible with respect to its
stationary distribution $\pi$. Let $X_1,X_2,\ldots$ be a stationary Markov chain with transition
kernel $P$.

Fix a window length $n\ge 2$.  The length-$n$ stationary path law on
$\Om^n$ is
\begin{equation}
\mun(u_1,\ldots,u_n)
:=
\Pp\bigl((X_1,\ldots,X_n)=(u_1,\ldots,u_n)\bigr)
=
\pi(u_1)\prod_{i=1}^{n-1}p(u_i,u_{i+1}).
\label{eq:stationary-path-law}
\end{equation}
Thus $\mun$ is a probability measure on ordered path segments.  In
particular, paths that contain the same states with different orderings need
not have the same $\mun$-probability.

For $u=(u_1,\ldots,u_n)\in\Om^n$, define its count vector by
\begin{equation}
N(u):=(N_x(u))_{x\in\Om},
\qquad
N_x(u):=\sum_{i=1}^n \1_{\{u_i=x\}},
\label{eq:empirical-count-map}
\end{equation}
and let
\begin{equation}
\Om_n^{\#}
:=
\left\{
\eta\in\mathbb N^{\Om}:
\sum_{x\in\Om}\eta_x=n
\right\}
\label{eq:count-simplex}
\end{equation}
denote the corresponding count space.  The map $N:\Om^n\to\Om_n^{\#}$ records how many
times each state appears and discards the order of appearance. We write \(u\sim\eta\) if \(N(u)=\eta\).

The push-forward of $\mun$ under $N$ is denoted by $\tmun$:
\begin{equation}
\tmun(\eta)
:=
\sum_{\substack{u\in\Om^n\\N(u)=\eta}}
\mun(u),
\qquad \eta\in\Om_n^{\#}.
\label{eq:stationary-count-law}
\end{equation}
If $P$ has zero transition probabilities, some count vectors may have zero
$\tmun$-mass.  We therefore define the effective count state space by
\begin{equation}
\Om_{n,+}^{\#}
:=
\supp(\tmun)
=
\{\eta\in\Om_n^{\#}:\tmun(\eta)>0\}.
\label{eq:count-support}
\end{equation}
No notational identification between $\Om_{n,+}^{\#}$ and the full simplex
$\Om_n^{\#}$ will be made until strict positivity of $P$ is imposed.

\subsection{Conditional resampling and the count transition}
\label{subsec:conditional-resampling-kernel}

Fix $\eta\in\Om_{n,+}^{\#}$.  A single transition is defined as follows.
\begin{enumerate}
\item Sample an ordered path $U=(U_1,\ldots,U_n)$ from the conditional
stationary path law
\begin{equation}
\Pp(U=u\mid N(U)=\eta)
=
\frac{\mun(u)}{\tmun(\eta)}\,
\1_{\{N(u)=\eta\}},
\qquad u\in\Om^n.
\label{eq:conditional-path-law}
\end{equation}

\item Conditional on $U$, sample a new state $Z$ according to
$P(U_n,\cdot)$.

\item Replace the count of the left endpoint by the count of the appended
state:
\begin{equation}
\eta' = \eta-e_{U_1}+e_Z,
\label{eq:count-update-rule}
\end{equation}
\end{enumerate}
where $e_x$ denotes the unit vector at coordinate $x$. 

The corresponding transition kernel on $\Om_{n,+}^{\#}$ is
\begin{equation}
\tPn(\eta,\xi)
=
\frac{1}{\tmun(\eta)}
\sum_{\substack{u\in\Om^n\\N(u)=\eta}}
\mun(u)
\sum_{z\in\Om}p(u_n,z)
\1_{\{\eta-e_{u_1}+e_z=\xi\}},
\qquad
\eta,\xi\in\Om_{n,+}^{\#}.
\label{eq:resampled-count-kernel}
\end{equation}
Whenever $\mun(u)p(u_n,z)>0$, the shifted path
$(u_2,\ldots,u_n,z)$ also has positive $\mun$-mass.  Hence the count vector
in \eqref{eq:count-update-rule} belongs to $\Om_{n,+}^{\#}$, and
\eqref{eq:resampled-count-kernel} indeed defines a Markov kernel on that
space.

We call $\tPn$ the \emph{conditionally resampled sliding-window count
kernel}, or simply the \emph{resampled count kernel}.  The word
``resampled'' is essential: conditional on the current count vector, the
hidden ordering is drawn from the generally nonuniform law in
\eqref{eq:conditional-path-law}.  Thus the construction is not, in general,
a uniform random reshuffling of the multiset represented by $\eta$.

To relate this kernel to the underlying sliding-window process, define the
ordered window and its count projection by
\begin{equation}
W_t:=(X_{t+1},\ldots,X_{t+n}),
\qquad
Y_t:=N(W_t),
\qquad t\ge 0.
\label{eq:sliding-window-processes}
\end{equation}
The process $(W_t)$ is Markov: it removes the first coordinate, appends a
state sampled from the transition row at the last coordinate, and keeps the
intervening coordinates in their existing order.  Its count projection
$(Y_t)$ is generally not Markov, because $N(W_t)$ does not determine the
ordered window and, in particular, does not determine its two endpoints.

Conditioning first on $W_0=(X_1,\ldots,X_n)$ gives
\begin{equation}
\tPn(\eta,\xi)
=
\Pp(Y_1=\xi\mid Y_0=\eta),
\qquad
\eta,\xi\in\Om_{n,+}^{\#}.
\label{eq:one-step-identification}
\end{equation}
Thus $\tPn$ is also the stationary one-step count kernel associated with
$(Y_t)$.

The one-step identification \eqref{eq:one-step-identification} does not
generally extend to multiple steps. Each transition of the $\tPn$-chain
begins by resampling an ordered path from the conditional law in
\eqref{eq:conditional-path-law}.  In the original sliding-window process,
by contrast, the ordered window produced by one shift is used directly in
the next shift. Consequently, for $k\ge 2$, $\tPn^k(\eta,\cdot)$ need not coincide with $\Pp(Y_k\in\cdot\mid Y_0=\eta)$.

\subsection{Reversibility, irreducibility, and the endpoint Dirichlet form}
\label{subsec:count-kernel-structure}

\begin{proposition}[Basic structure of the resampled count kernel]
\label{prop:count-kernel-structure}
The kernel $\tPn$ is reversible with respect to $\tmun$ and irreducible on
$\Om_{n,+}^{\#}$.  More precisely,
\begin{equation}
\tmun(\eta)\tPn(\eta,\xi)
=
\tmun(\xi)\tPn(\xi,\eta),
\qquad
\eta,\xi\in\Om_{n,+}^{\#}.
\label{eq:count-kernel-detailed-balance}
\end{equation}
Consequently, $\tmun$ is the unique stationary distribution of $\tPn$ on
$\Om_{n,+}^{\#}$.
\end{proposition}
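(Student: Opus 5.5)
The plan is to write the joint law $\tmun(\eta)\tPn(\eta,\xi)$ as the law of a pair of counts along a stationary path of length $n+1$, and then use path reversal \eqref{eq:prelim-path-reversal} for the reversible chain $P$. From \eqref{eq:resampled-count-kernel},
\[
\tmun(\eta)\tPn(\eta,\xi)
=\sum_{u\in\Om^n,\,z\in\Om}\mun(u)p(u_n,z)\1_{\{N(u)=\eta,\ N(u_2,\ldots,u_n,z)=\xi\}}
=\Pp\bigl(N(X_1,\ldots,X_n)=\eta,\ N(X_2,\ldots,X_{n+1})=\xi\bigr).
\]
Here we used $\mun(u)p(u_n,z)=\Pp((X_1,\ldots,X_{n+1})=(u,z))$ and the fact that $\eta-e_{u_1}+e_z=N(u_2,\ldots,u_n,z)$ whenever $N(u)=\eta$. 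In words, this is $\Pp(Y_0=\eta,Y_1=\xi)$, which is just \eqref{eq:one-step-identification} multiplied through by $\tmun(\eta)$.

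Next I apply \eqref{eq:prelim-path-reversal} with $r=n$ to the stationary reversible chain $(X_1,\ldots,X_{n+1})$. This gives $(X_1,\ldots,X_{n+1})\overset{d}{=}(X_{n+1},\ldots,X_1)$. Counts are invariant under reordering, so under the reversed path the first window $(X_{n+1},\ldots,X_2)$ has the same count as $(X_2,\ldots,X_{n+1})$. Likewise the second window $(X_n,\ldots,X_1)$ has the same count as $(X_1,\ldots,X_n)$. Hence $\Pp(Y_0=\eta,Y_1=\xi)=\Pp(Y_0=\xi,Y_1=\eta)$, which is \eqref{eq:count-kernel-detailed-balance}. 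Summing detailed balance over $\eta$ shows that $\tmun$ is stationary.

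For irreducibility on $\Om_{n,+}^{\#}$, take $\eta,\xi$ in the support and pick paths $u\sim\eta$ and $v\sim\xi$ with $\mun(u),\mun(v)>0$. Irreducibility of $P$ gives a connecting path $u_n=w_0,w_1,\ldots,w_k=v_1$ with positive transitions. Consider the concatenated path $u\,w_1\cdots w_{k-1}\,v$, which has positive probability under the stationary chain. Its sliding length-$n$ windows have counts $\eta=\zeta_0,\zeta_1,\ldots,\zeta_L=\xi$. Each consecutive pair satisfies $\Pp(Y_0=\zeta_j,Y_1=\zeta_{j+1})>0$, since the corresponding $(n+1)$-window has positive probability by stationarity. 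By the first display, this means $\tPn(\zeta_j,\zeta_{j+1})>0$, with $\tmun(\zeta_j)>0$ automatically. So $\xi$ is reachable from $\eta$.

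Uniqueness of the stationary distribution then follows from irreducibility on the finite set $\Om_{n,+}^{\#}$. The only step needing care is the bookkeeping that identifies the kernel-weighted sum with the two-window joint law, in particular checking that the update rule matches the count of the shifted window. The rest is routine.
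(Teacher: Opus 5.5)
Your proposal is correct and follows essentially the same route as the paper. You identify $\tmun(\eta)\tPn(\eta,\xi)$ with $\Pp(Y_0=\eta,Y_1=\xi)$, checking directly from \eqref{eq:resampled-count-kernel} what the paper takes from \eqref{eq:one-step-identification}; you get detailed balance from reversal of $(X_1,\ldots,X_{n+1})$ together with permutation invariance of counts; and you prove irreducibility by sliding a window along a concatenated positive-probability path from $u$ to $v$, reading positivity of each step off the joint law instead of the explicit bound \eqref{eq:positive-projected-transition}. One small repair: if $u_n=v_1$ the concatenation $u\,w_1\cdots w_{k-1}\,v$ is not defined for $k=0$, so choose the connecting path with $k\ge 1$, which irreducibility of $P$ always allows.
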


\begin{proof}
By \eqref{eq:one-step-identification} and the fact that $Y_0$ has law
$\tmun$,
\begin{equation}
\tmun(\eta)\tPn(\eta,\xi)
=
\Pp(Y_0=\eta,Y_1=\xi).
\label{eq:joint-law-count-pair}
\end{equation}
Reversibility of $P$ implies that
\[
(X_1,\ldots,X_{n+1})
\overset{d}{=}
(X_{n+1},\ldots,X_1);
\]
see \eqref{eq:prelim-path-reversal}.  The forward path determines the pair
$(Y_0,Y_1)$, whereas the reversed path determines $(Y_1,Y_0)$, since
reversing the order inside a window does not change its count vector.
Therefore the joint law of $(Y_0,Y_1)$ is symmetric, and
\eqref{eq:count-kernel-detailed-balance} follows from
\eqref{eq:joint-law-count-pair}.

It remains to prove irreducibility.  Let
$\eta,\xi\in\Om_{n,+}^{\#}$.  Choose $u,v\in\Om^n$ such that
\[
N(u)=\eta,
\qquad
N(v)=\xi,
\qquad
\mun(u)>0,
\qquad
\mun(v)>0.
\]
Since $P$ is irreducible, there are states
\[
u_n=z_0,z_1,\ldots,z_r=v_1
\]
such that $p(z_{j-1},z_j)>0$ for $1\le j\le r$.  Starting from the ordered
window $u$, append successively
\[
z_1,\ldots,z_r,v_2,\ldots,v_n.
\]
This produces a finite sequence of admissible ordered windows
\[
w^{(0)}=u,w^{(1)},\ldots,w^{(L)}=v,
\]
where each shift appends a state with positive transition probability.  If
$a_j$ is the state appended in the transition from $w^{(j)}$ to
$w^{(j+1)}$, then \eqref{eq:resampled-count-kernel} gives
\begin{equation}
\tPn\bigl(N(w^{(j)}),N(w^{(j+1)})\bigr)
\ge
\frac{\mun(w^{(j)})}{\tmun(N(w^{(j)}))}
\,p\bigl(w^{(j)}_n,a_j\bigr)
>0.
\label{eq:positive-projected-transition}
\end{equation}
The projected sequence therefore connects $\eta$ to $\xi$ through
positive-probability transitions of $\tPn$.  This proves irreducibility.
Detailed balance gives stationarity of $\tmun$, and irreducibility gives
uniqueness.
\end{proof}

The next identity is the form of the Dirichlet energy used in the
spectral-gap argument.

\begin{proposition}[Endpoint representation of the Dirichlet form]
\label{prop:endpoint-dirichlet-form}
For every $F:\Om_{n,+}^{\#}\to\mathbb R$,
\begin{equation}
\begin{aligned}
\cE_{\tPn}(F,F)
={}
\frac12
\sum_{\substack{u\in\Om^n\\\mun(u)>0}}
\mun(u)
\sum_{\substack{z\in\Om\\p(u_n,z)>0}}
p(u_n,z)
\left[
F\bigl(N(u)-e_{u_1}+e_z\bigr)
-F\bigl(N(u)\bigr)
\right]^2.
\end{aligned}
\label{eq:endpoint-dirichlet-form}
\end{equation}
\end{proposition}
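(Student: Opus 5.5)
The plan is to obtain \eqref{eq:endpoint-dirichlet-form} directly from the stationary-pair representation \eqref{eq:prelim-stationary-pair} of the Dirichlet form. By Proposition~\ref{prop:count-kernel-structure}, $\tPn$ is reversible with respect to $\tmun$, so \eqref{eq:prelim-dirichlet-form} and \eqref{eq:prelim-stationary-pair} apply. The first step is to identify a stationary one-step pair for $\tPn$. By \eqref{eq:joint-law-count-pair}, the pair $(Y_0,Y_1)=(N(W_0),N(W_1))$ built from the stationary chain $X_1,\ldots,X_{n+1}$ has joint law $\tmun(\eta)\tPn(\eta,\xi)$. Hence
\[
\cE_{\tPn}(F,F)=\tfrac12\,\E\bigl[(F(Y_1)-F(Y_0))^2\bigr].
\]

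The second step is to compute this expectation by conditioning on the ordered window $W_0=(X_1,\ldots,X_n)$ and the appended state $X_{n+1}$. The joint law of $(W_0,X_{n+1})$ is $\mun(u)\,p(u_n,z)$. On this event $Y_0=N(u)$, and $Y_1=N(u_2,\ldots,u_n,z)=N(u)-e_{u_1}+e_z$. Summing over $(u,z)$ therefore gives exactly the right-hand side of \eqref{eq:endpoint-dirichlet-form}. The restrictions $\mun(u)>0$ and $p(u_n,z)>0$ only remove zero terms.

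These restrictions also make every term well defined. If $\mun(u)>0$, then $N(u)\in\Om_{n,+}^{\#}$. If in addition $p(u_n,z)>0$, then the shifted path has positive mass, as noted after \eqref{eq:resampled-count-kernel}, so $N(u)-e_{u_1}+e_z\in\Om_{n,+}^{\#}$. Thus $F$ is only ever evaluated on its domain.

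As a cross-check, one can instead expand \eqref{eq:prelim-dirichlet-form} using the explicit kernel \eqref{eq:resampled-count-kernel}. The factor $\tmun(\eta)$ cancels against the normalisation $1/\tmun(\eta)$. Summing over $\eta$ and then over $u\sim\eta$ amounts to summing over all $u$ with $\mun(u)>0$. Finally, summing over $\xi$ absorbs the indicator, which gives the same formula.

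No step here is a real obstacle. The only point needing care is the bookkeeping of supports, ensuring that every evaluation of $F$ lies in $\Om_{n,+}^{\#}$, and this is handled by the positivity remark above.
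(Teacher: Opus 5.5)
Your proposal is correct and essentially matches the paper: your cross-check (substituting \eqref{eq:resampled-count-kernel}, cancelling $\tmun(\eta)$, merging the sums over $\eta$ and $u\sim\eta$, and absorbing the indicator by summing over $\xi$) is exactly the paper's proof, and your support remark is the same one the paper makes. Your main route, through the stationary pair $(Y_0,Y_1)$ and \eqref{eq:joint-law-count-pair}, is a probabilistic repackaging of that same computation, since \eqref{eq:one-step-identification} is itself obtained by expanding over $(u,z)$ with weight $\mun(u)p(u_n,z)$.
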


\begin{proof}
Whenever $\mun(u)p(u_n,z)>0$, the shifted path
$(u_2,\ldots,u_n,z)$ has positive $\mun$-mass.  Hence
\[
N(u)-e_{u_1}+e_z\in\Om_{n,+}^{\#},
\]
so every nonzero term below is well defined.  Substituting
\eqref{eq:resampled-count-kernel} into the definition of the Dirichlet
form gives
\begin{align*}
\cE_{\tPn}(F,F)
&=
\frac12
\sum_{\eta\in\Om_{n,+}^{\#}}
\tmun(\eta)
\sum_{\xi\in\Om_{n,+}^{\#}}
\tPn(\eta,\xi)
\bigl(F(\xi)-F(\eta)\bigr)^2
\\
&=
\frac12
\sum_{\eta\in\Om_{n,+}^{\#}}
\sum_{\substack{u\in\Om^n\\N(u)=\eta\\\mun(u)>0}}
\mun(u)
\sum_{\substack{z\in\Om\\p(u_n,z)>0}}
p(u_n,z)
\sum_{\xi\in\Om_{n,+}^{\#}}
\1_{\{\eta-e_{u_1}+e_z=\xi\}}
\bigl(F(\xi)-F(\eta)\bigr)^2
\\
&=
\frac12
\sum_{\substack{u\in\Om^n\\\mun(u)>0}}
\mun(u)
\sum_{\substack{z\in\Om\\p(u_n,z)>0}}
p(u_n,z)
\left[
F\bigl(N(u)-e_{u_1}+e_z\bigr)
-
F\bigl(N(u)\bigr)
\right]^2.
\end{align*}
This is \eqref{eq:endpoint-dirichlet-form}.

\end{proof}

Formula \eqref{eq:endpoint-dirichlet-form} shows that the count-space
Dirichlet form is generated by an endpoint replacement: the state at the
left endpoint of the hidden path is removed, and the appended state is
sampled from the transition row at the right endpoint.  The interior of the
path affects the energy through the stationary path weights and through the
conditional distribution of the hidden ordering.

\subsection{The Poincar\'e inequality and the main lower bound}
\label{subsec:main-lower-bound}

Having constructed the resampled count kernel $\tPn$, we now turn to
the main problem of the paper: understanding its Poincar\'e spectral
gap as the window length $n$ varies.  More precisely, we seek to
determine the dependence of $\Gap(\tPn)$ on $n$ for a fixed underlying
kernel $P$, while recording how the constants in the resulting bounds
depend on $P$.  Under strict positivity, the theorem below gives a
lower bound of order $n^{-1}$; a later linear-statistic argument gives
a matching upper bound of the same order.

For the quantitative lower bound,
we impose the stronger assumption
\begin{equation}
p_*
:=
\min_{x,y\in\Om}p(x,y)
>0.
\label{eq:strict-positivity}
\end{equation}
Under \eqref{eq:strict-positivity}, every ordered path has positive
$\mun$-mass and every count vector can be realized. Therefore, it is direct that
\begin{equation}
\Om_{n,+}^{\#}=\Om_n^{\#}.
\label{eq:full-count-support}
\end{equation}

Define the Dobrushin contraction coefficient of $P$ by
\begin{equation}
\delta(P)
:=
\max_{x,y\in\Om}
\frac12\sum_{z\in\Om}|p(x,z)-p(y,z)|.
\label{eq:dobrushin-coefficient-P}
\end{equation}
Since
\[
\frac12\sum_{z\in\Om}|p(x,z)-p(y,z)|
=
1-\sum_{z\in\Om}\min\{p(x,z),p(y,z)\},
\]
strict positivity gives
\begin{equation}
\delta(P)
\le
1-mp_*
<1.
\label{eq:positive-dobrushin-bound}
\end{equation}

\begin{theorem}[Poincar\'e inequality]
\label{thm:main-gap-lower-bound}
Let $P$ be a reversible Markov kernel on the finite state space $\Om$, and
assume \eqref{eq:strict-positivity}.  Then, for every $n\ge 2$ and every
function $F:\Om_n^{\#}\to\mathbb R$,
\begin{equation}
\Var_{\tmun}(F)
\le
\frac{4n}
{p_*^4\bigl(1-\sqrt{\delta(P)}\bigr)^2}
\,\cE_{\tPn}(F,F).
\label{eq:main-poincare-bound}
\end{equation}
Consequently,  we have the spectral-gap lower bound as
\begin{equation}
\Gap(\tPn)
\ge
\frac{p_*^4\bigl(1-\sqrt{\delta(P)}\bigr)^2}{4n}.
\label{eq:main-gap-bound}
\end{equation}
\end{theorem}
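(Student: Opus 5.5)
We follow the two-step route announced in the proof strategy. Set $f:=F\circ N$ on $\Om^n$, so that $\Var_{\tmun}(F)=\Var_{\mun}(f)$.

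\emph{Step 1: a path-space variance bound.} We prove a martingale oscillation inequality for $\mun$:
\[
\Var_{\mun}(f)\le \frac{C_1}{(1-\sqrt{\delta(P)})^2}\sum_{i=1}^n \E_{\mun}\bigl[\operatorname{osc}_i(f)^2\bigr],
\]
where $\operatorname{osc}_i(f)(u)$ is a suitable average or maximum of $|f(u)-f(u^{(i,a)})|$ over replacements $u_i\mapsto a$.

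To get it, write $f-\mun(f)=\sum_{k=1}^n D_k$ with the Doob martingale differences $D_k=\E[f\mid \mathcal F_k]-\E[f\mid\mathcal F_{k-1}]$ and $\mathcal F_k=\sigma(X_1,\dots,X_k)$. Bound $|D_k|$ by the difference of $\E[f\mid X_{1..k}]$ under two values $x,y$ of $X_k$. Couple the two conditional continuations $(X_{k+1},\dots)$ by iterated maximal couplings. After the disagreement at $k$, the chains disagree at time $k+j$ with probability at most $\delta(P)^j$, since $\delta(P)$ is the one-step TV bound. By a telescoping of single-site changes, this gives
\[
|D_k|\le \sum_{j\ge0}\delta^{j}\,\E\bigl[\operatorname{osc}_{k+j}f\mid\cdots\bigr].
\]
Squaring with Cauchy–Schwarz, weighted by $\delta^{j/2}\cdot\delta^{j/2}$, gives
\[
|D_k|^2\le (1-\sqrt\delta)^{-1}\sum_j \delta^{j/2}(\operatorname{osc}_{k+j})^2.
\]
Summing over $k$ yields the factor $(1-\sqrt\delta)^{-2}$. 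The transfer between conditional and unconditional expectations of $\operatorname{osc}_{k+j}^2$ costs powers of $p_*$, because positivity makes all conditional single-site marginals comparable to within $p_*$-factors.

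\emph{Step 2: comparing single-site changes with the endpoint form.} Fix $i$ and a replacement $u_i\mapsto a$. Since $f$ depends only on counts, $f(u^{(i,a)})-f(u)=F(N(u)-e_{u_i}+e_a)-F(N(u))$. Rotate cyclically, $v=(u_i,u_{i+1},\dots,u_n,u_1,\dots,u_{i-1})$. This has the same counts, and its left endpoint is $u_i$. The endpoint term in Proposition~\ref{prop:endpoint-dirichlet-form} with path $v$ and appended $z=a$ produces exactly the count change $N(u)-e_{u_i}+e_a$.

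The weights must be compared. Rotation changes $\mun$ only through $\pi(\cdot)$ at the start and two transition factors at the seam. Each such factor lies in $[p_*,1]$, and $\pi\ge p_*$, because $\pi(y)=\sum_x\pi(x)p(x,y)\ge p_*$. Hence $\mun(u)\le p_*^{-2}\mun(v)$ or so. The appended weight satisfies $p(v_n,a)\ge p_*$. Collecting these gives
\[
\E_{\mun}\bigl[\operatorname{osc}_i(f)^2\bigr]\le C_2\,p_*^{-c}\cdot 2\,\cE_{\tPn}(F,F)
\]
uniformly in $i$. Summing over the $n$ sites produces the factor $n$. We then tune which form of $\operatorname{osc}$ is used (an average weighted by $p(\cdot,\cdot)$ rather than a max) so that the total $p_*$ loss is $p_*^{-4}$ and the constants give $4$. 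The gap bound \eqref{eq:main-gap-bound} then follows from \eqref{eq:prelim-gap}.

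\emph{Main obstacle.} The hard part is Step 1: getting the martingale coupling bound with only $(1-\sqrt\delta)^{-2}$ and a clean $p_*$ power. Specifically, the oscillations at later sites are evaluated under the coupled conditional laws rather than under $\mun$, and these must be converted back to $\mun$-expectations. I would first try to handle this by defining $\operatorname{osc}_j$ as an average over replacements weighted by the conditional single-site law, which positivity bounds below by $p_*^2$. The re-weighting of the conditional law of the future given a modified past also costs a bounded factor, since only the single link $p(u_{k},u_{k+1})$ changes.
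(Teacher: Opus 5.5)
Your architecture matches the paper's: Doob martingale, iterated maximal coupling, single-site telescoping, then the rotation $\theta_i$ against the endpoint form of Proposition~\ref{prop:endpoint-dirichlet-form}. Your Step~2 is essentially right. Step~1, however, is not actually carried out, and one of its displayed inequalities does not follow from the coupling.

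In the telescoping of $f(U^a)-f(U^b)$, the term at site $\ell\ge k$ is bounded by $\1\{U^a_\ell\ne U^b_\ell\}\,O_\ell$, where $O_\ell$ is $\operatorname{osc}_\ell f$ evaluated at the hybrid path $V^{(\ell)}=(w,U^b_k,\dots,U^b_{\ell-1},U^a_\ell,\dots,U^a_n)$. The indicator and $O_\ell$ are correlated, so $|D_k|\le\sum_j\delta^j\,\E[\operatorname{osc}_{k+j}f\mid\cdots]$ is unjustified. What holds is $\E[\1\{U^a_\ell\ne U^b_\ell\}\,O_\ell]\le\delta^{(\ell-k)/2}(\E O_\ell^2)^{1/2}$ by Cauchy--Schwarz, and this is the real source of $\sqrt{\delta}$. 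Your next display is consistent with this version, not with the one you wrote.

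The step you call the main obstacle is the heart of the argument. $V^{(\ell)}$ has neither the $a$- nor the $b$-conditional law. Redefining $\operatorname{osc}$ as an average does not help, since the problem is the law of the path at which the oscillation is evaluated. The missing idea is the pointwise domination (with $v_k=b$)
\[
\Pp\bigl(V^{(\ell)}=v\bigr)\le p_*^{-1}\,\mun\bigl(v\mid X_{<k}=w,\ X_k=b\bigr),\qquad \ell>k .
\]
It holds because the hybrid's probability is at most the Markov product $\prod_{r=k}^{n-1}p(v_r,v_{r+1})$ with the single seam factor $p(v_{\ell-1},v_\ell)\ge p_*$ deleted. To see this, bound $\Pp(U^a_\ell=v_\ell\mid b\text{-prefix})\le 1$, and note that after time $\ell$ the $a$-trajectory moves by $P$ given the joint history. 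For $\ell=k$ the hybrid is $U^a$ itself. The seam moves with $\ell$; it is not the link following the modified site $k$.

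Second, the constant in \eqref{eq:main-poincare-bound} is asserted (``tune \dots so that the constants give 4'') rather than derived, and the allocation you sketch overshoots. To get exactly $4p_*^{-4}$, use the identity $\E[D_k^2\mid X_{<k}=w]=\tfrac12\E[(G_w(A)-G_w(B))^2]$. Here $G_w(c)=\E[f\mid X_{<k}=w,X_k=c]$, and $A,B$ are independent draws from the conditional law of $X_k$ given $w$. The domination gives $\E O_\ell^2\le p_*^{-1}\bigl(L_\ell(w,a)+L_\ell(w,b)\bigr)$, where $L_\ell(w,c)$ is the conditional mean of $(\operatorname{osc}_\ell f)^2$ given $X_{<k}=w$, $X_k=c$. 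Averaging over $(A,B)$ returns $\E_{\mun}[(\operatorname{osc}_\ell f)^2\mid X_{<k}=w]$ with no further loss. So Step~1 costs exactly $p_*^{-1}(1-\sqrt\delta)^{-2}$ with the max-oscillation. Bounding $|D_k|$ by the worst pair $x,y$, as you propose, costs an extra $p_*^{-1}$.

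In Step~2, keep the maximum and use
\[
\bigl(\max_z h(z)-\min_z h(z)\bigr)^2\le\frac{2}{p_*}\sum_z p(u_{i-1},z)\bigl(h(z)-h(u_i)\bigr)^2,\qquad h(z)=F\bigl(N(u)-e_{u_i}+e_z\bigr).
\]
This holds because both extremizers appear in the sum with weight at least $p_*$; passing through $h(u_i)$ twice would give $4/p_*$. Combine this with the rotation bound $\mun(\theta_i^{-1}v)\le p_*^{-2}\mun(v)$ and the factor $2$ in $\cE_{\tPn}(F,F)=\frac12\E_{\mun}[\cdots]$. The total is $p_*^{-1}\cdot 2p_*^{-1}\cdot p_*^{-2}\cdot 2=4p_*^{-4}$. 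Spending $p_*^{-2}$ already in Step~1, as you suggest, on top of the $p_*^{-3}$ used in Step~2, would give $p_*^{-5}$.
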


The proof of Theorem~\ref{thm:main-gap-lower-bound} is given in the next
section.  The displayed constant depends on the fixed kernel $P$ but not on
the window length $n$; no claim of optimality is made for this constant.  A
later linear-statistic argument gives the matching upper bound
$\Gap(\tPn)\le C(P)/n$, and hence identifies $n^{-1}$ as the correct order
for every fixed nontrivial strictly positive reversible kernel.

\subsection{A local-to-global inequality for finite-window count statistics}

The spectral gap of the resampled count kernel provides a
local-to-global inequality for statistics of a stationary window.
Indeed, by the Poincar\'e inequality, the one-step identification
in~\eqref{eq:one-step-identification}, and the endpoint
representation in Proposition~\ref{prop:endpoint-dirichlet-form},
every function
\[
F:\Omega^{\#}_{n,+}\longrightarrow\mathbb R
\]
satisfies
\begin{equation}
\operatorname{Var}_{\widetilde\mu_n}(F)
\le
\frac{1}{2\operatorname{Gap}(\widetilde P_n)}
\mathbb E\!\left[(F(Y_1)-F(Y_0))^2\right].
\label{eq:adjacent-window-poincare}
\end{equation}
Equivalently, the increment on the right-hand side is
\[
F\bigl(N(X_2,\ldots,X_{n+1})\bigr)
-
F\bigl(N(X_1,\ldots,X_n)\bigr).
\]
Thus the global fluctuation of a count statistic in a stationary
length-$n$ window is controlled by its mean squared change under
one actual shift of that window.  Estimating
$\operatorname{Gap}(\widetilde P_n)$ may therefore be viewed as
quantifying the cost of converting adjacent-window stability into
global fluctuation control.

General concentration inequalities for functions of a Markov path
are often formulated in terms of coordinatewise sensitivities,
martingale differences, and coupling or mixing coefficients
\cite{kontorovich2008concentration,Paulin2015}.  For additive
functionals, spectral methods for the underlying kernel provide
variance and concentration bounds with constants governed by its
spectral gap \cite{Paulin2015}.  Local-energy Poincar\'e
inequalities associated with conditional single-site updates are
also classical in the Gibbs-measure setting \cite{Wu2006}.

The viewpoint in \eqref{eq:adjacent-window-poincare} is
complementary.  It is formulated after the ordered path has been
projected to its count vector, and its Dirichlet energy is exactly
the averaged squared change under an actual shift of the
stationary window.  This form may provide a useful finite-sample
starting point for nonlinear or nonsmooth count statistics when
their averaged adjacent-window increment can be estimated more
sharply than a worst-case coordinatewise sensitivity.  We do not
pursue such function-specific estimates here.

A comparison of the form
\[
\operatorname{Gap}(\widetilde P_n)
\ge
c\,\frac{\operatorname{Gap}(P)}{n}
\]
would strengthen \eqref{eq:adjacent-window-poincare} to
\[
\operatorname{Var}_{\widetilde\mu_n}(F)
\le
\frac{n}{2c\,\operatorname{Gap}(P)}
\mathbb E\!\left[(F(Y_1)-F(Y_0))^2\right].
\]
In particular, a mean squared adjacent-window increment of order
$n^{-2}$ would then yield the natural variance scale
$(n\operatorname{Gap}(P))^{-1}$.  We formulate the corresponding
gap-comparison conjecture after the upper bounds in
Section~\ref{sec:upper-bounds}.

Finally, this interpretation is strictly one-step.  It uses the
joint law of $(Y_0,Y_1)$, but does not identify the multi-step
dynamics of the $\widetilde P_n$-chain with the original count
process $(Y_t)$.

\section{Proof of the lower bound}
\label{sec:proof-lower-bound}

The proof lifts a function on the count space to the stationary path
space. A martingale coupling controls the resulting path-space variance
by coordinate oscillations, which are then compared with the endpoint
Dirichlet form of the resampled count kernel \(\widetilde P_n\). We work
throughout under the positivity assumption \(p_\ast>0\), so
\(\delta(P)<1\).

\subsection{Two estimates and completion of the proof}
\label{subsec:two-estimates}

For a function \(H:\Omega^n\to\mathbb{R}\), a path
\(u=(u_1,\ldots,u_n)\in\Omega^n\), and \(1\le i\le n\), define the
\emph{coordinate oscillation}
\begin{align}
\operatorname{osc}_i H(u)
:=
\max_{a,b\in\Omega}
\Bigl|
&H(u_1,\ldots,u_{i-1},a,u_{i+1},\ldots,u_n)
-
H(u_1,\ldots,u_{i-1},b,u_{i+1},\ldots,u_n)
\Bigr|.
\label{eq:coordinate-oscillation}
\end{align}
Thus \(\operatorname{osc}_i H(u)\) is the largest possible change in
\(H\) when only the \(i\)-th coordinate is altered. The sum of the
squared coordinate oscillations will serve as the intermediate quantity
between the variance on path space and the Dirichlet form on count
space.

The first estimate controls the variance of an arbitrary function on
the stationary path space by this intermediate quantity.

\begin{proposition}[Variance bound by coordinate oscillations]
\label{prop:path-variance}
For every \(H:\Omega^n\to\mathbb{R}\),
\begin{equation}
\operatorname{Var}_{\mu_n}(H)
\le
\frac{1}
{p_\ast\bigl(1-\sqrt{\delta(P)}\bigr)^2}
\sum_{i=1}^n
\mathbb{E}_{\mu_n}
\!\left[
  \bigl(\operatorname{osc}_i H\bigr)^2
\right].
\label{eq:path-variance-bound}
\end{equation}
\end{proposition}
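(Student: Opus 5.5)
The plan is to use a Doob martingale (left-to-right revealing) decomposition of \(H\) under \(\mu_n\), bound each martingale increment by a coupling of two continuations, and then sum the resulting geometric series.

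\emph{Step 1: martingale decomposition.} Let \(U\sim\mu_n\) and \(\mathcal F_k:=\sigma(U_1,\dots,U_k)\), with \(\mathcal F_0\) trivial. Put \(M_k:=\E[H(U)\mid\mathcal F_k]\) and \(D_k:=M_k-M_{k-1}\). Orthogonality of increments gives
\[
\Var_{\mu_n}(H)=\sum_{k=1}^n\E[D_k^2].
\]
By the Markov property, \(M_k=g_k(U_1,\dots,U_k)\), where \(g_k\) is the expectation of \(H\) over a \(P\)-chain started at \(U_k\) at time \(k\). Hence
\[
D_k=\sum_{a\in\Om}q(a)\bigl[g_k(U_{<k},U_k)-g_k(U_{<k},a)\bigr],
\]
with \(q=p(U_{k-1},\cdot)\), or \(q=\pi\) when \(k=1\). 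So it suffices to control \(|g_k(U_{<k},x)-g_k(U_{<k},a)|\) for two values \(x,a\) of the \(k\)-th coordinate.

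\emph{Step 2: coupling.} Run two \(P\)-chains from \(x\) and from \(a\) at time \(k\). At each step use the maximal coupling of the transition rows, and once the chains agree, keep them glued. The probability that they still disagree at time \(k+j\) is at most \(\delta^j\), where \(\delta=\delta(P)\). Call the coupled continuations \(V\) and \(V'\). Pass from \((U_{<k},x,V)\) to \((U_{<k},a,V')\) by changing one coordinate at a time, left to right. The change at coordinate \(k+j\) is nonzero only on the disagreement event \(A_j\), and there it is bounded by \(\operatorname{osc}_{k+j}H\) evaluated at a hybrid path \(w^{(j)}\), whose first part follows the second chain and whose tail follows the first. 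This gives
\[
|g_k(\cdot,x)-g_k(\cdot,a)|\le \E\Bigl[\sum_{j\ge0}\1_{A_j}\operatorname{osc}_{k+j}H(w^{(j)})\Bigr].
\]
Next apply Cauchy--Schwarz with weights \(\delta^{j/2}\):
\[
\Bigl(\sum_j \delta^{j/2}\cdot\delta^{-j/2}\1_{A_j}o_j\Bigr)^2\le\Bigl(\sum_j\delta^{j/2}\Bigr)\sum_j\delta^{-j/2}\1_{A_j}o_j^2 .
\]
Taking expectations and using \(\Pp(A_j)\le\delta^j\) leaves effective weights of order \(\delta^{j/2}\). After Jensen in \(a\) and \(x\), this yields
\[
\E[D_k^2]\lesssim(1-\sqrt\delta)^{-1}\sum_{j\ge0}\delta^{j/2}\,\E[\operatorname{osc}_{k+j}H(w^{(j)})^2].
\]
Summing over \(k\) and exchanging the order of summation contributes a second factor \((1-\sqrt\delta)^{-1}\), which is the origin of \((1-\sqrt{\delta})^{-2}\) in the statement.

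\emph{Step 3: change of measure (main obstacle).} The hybrid paths \(w^{(j)}\) are not \(\mu_n\)-distributed, so \(\E[\operatorname{osc}_{k+j}H(w^{(j)})^2]\) must be compared with \(\E_{\mu_n}[(\operatorname{osc}_{k+j}H)^2]\). Since \(\operatorname{osc}_{k+j}H\) does not depend on coordinate \(k+j\), only the law of the remaining coordinates matters. Conditioned on the event that the two chains have not yet met, the hybrid's coordinates to the left of \(k+j\) form a \(P\)-path, and so do the coordinates to the right. The only defect is the splice across the freely varying coordinate \(k+j\) together with the free choice of \(x\) and \(a\) at time \(k\).

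I would first try to show that the joint law of \((w^{(j)}_i)_{i\ne k+j}\), restricted to the relevant disagreement event, has density at most \(1/p_*\) with respect to the corresponding marginal of \(\mu_n\). The idea is to insert an intermediate state at coordinate \(k+j\): every transition probability is at least \(p_*\), so reconnecting the two halves costs at most one factor \(p_*^{-1}\). The delicate points are that conditioning on disagreement distorts the law, and that the maximal-coupling kernels are not \(P\) itself. If the clean density bound fails, the fallback is to bound the laws of the coupled chains on the disagreement event by \(\delta^{j}\) times a sub-probability whose path weights are dominated by \(P\)-weights up to \(p_*^{-1}\). Since the statement carries only one factor \(1/p_*\), this domination must be arranged once per splice, not once per step.
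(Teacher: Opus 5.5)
Your skeleton is the paper's: Doob decomposition, Markovian maximal coupling, left-to-right telescoping through hybrid paths, and one factor $p_*^{-1}$ per splice. However, the step ``taking expectations and using $\Pp(A_j)\le\delta^j$'' is a genuine gap.

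Your pointwise Cauchy--Schwarz leaves $\sum_j\delta^{-j/2}\E[\1_{A_j}o_j^2]$. Turning this into weights $\delta^{j/2}$ requires $\E[\1_{A_j}o_j^2]\lesssim\delta^j\E[o_j^2]$, i.e.\ that the disagreement event decouples from the oscillation at the hybrid path. That is false, because $\delta$ bounds the one-step disagreement probability only on average over the next state, not conditionally on the trajectory. Concretely, take $\Om=\{0,1\}$ with $p(0,1)=s$, $p(1,0)=t$, $s,t\in(0,1)$ and $s+t<1$, so $\delta=1-s-t$. Start the chains at $(x,a)=(0,1)$. The maximal coupling never produces the configuration $(1,0)$, so on $A_j$ the second chain sits at $1$ throughout. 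Now take $H(u)=\1\{u_k=\cdots=u_{k+j}=1\}$. Then $o_j=\1\{V'_{k+1}=\cdots=V'_{k+j-1}=1\}$, so $\E[\1_{A_j}o_j^2]=\delta^j$ while $\E[o_j^2]=(1-t)^{j-1}$. The ratio $(1-t)^{-(j-1)}$ is unbounded in $j$. Hence your fallback, a density of order $\delta^j p_*^{-1}$ for the hybrid law restricted to $A_j$, is not available. Your first option, density at most $p_*^{-1}$ on $A_j$, is true but leaves weights $\delta^{-j/2}$ that grow exponentially in $j$.

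The repair, which is what the paper does, is to apply Cauchy--Schwarz in the probability space term by term before summing over $j$:
$\E[\1_{A_j}o_j]\le\Pp(A_j)^{1/2}(\E[o_j^2])^{1/2}\le\delta^{j/2}(\E[o_j^2])^{1/2}$.
Only then apply Cauchy--Schwarz over $j$ with weights $\delta^{j/2}$, which gives $|g_k(\cdot,x)-g_k(\cdot,a)|^2\le(1-\sqrt\delta)^{-1}\sum_j\delta^{j/2}\E[o_j^2]$. This, not your choice of weights, is where $\sqrt\delta$ comes from.

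It also means you need only the \emph{unrestricted} law of $w^{(j)}$, where the change of measure is clean and settles both of your delicate points:
\begin{itemize}
\item The second chain's prefix has probability $\prod_{r=k}^{k+j-2}p(v_r,v_{r+1})$.
\item The maximal couplings have the rows of $P$ as exact marginals, so given the joint history, each step of the first chain after time $k+j$ is drawn from $P$.
\item Dropping the single constraint that the first chain sits at $v_{k+j}$ at time $k+j$ removes exactly the factor $p(v_{k+j-1},v_{k+j})\ge p_*$.
\end{itemize}
Hence $\Pp(w^{(j)}=v)\le p_*^{-1}\mu_n(v\mid U_{<k},U_k=a)$ for $j\ge1$, while $w^{(0)}$ has exactly the law $\mu_n(\cdot\mid U_{<k},U_k=x)$. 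Averaging over independent $x,a\sim q$ gives $\E[D_k^2\mid\mathcal F_{k-1}]\le\frac{1}{p_*(1-\sqrt\delta)}\sum_{j\ge0}\delta^{j/2}\E_{\mu_n}[(\operatorname{osc}_{k+j}H)^2\mid\mathcal F_{k-1}]$. Summing over $k$ then yields the stated constant.

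Your Jensen step costs a factor $2$ relative to the identity $\E[D_k^2\mid\mathcal F_{k-1}]=\frac12\E[(g_k(\cdot,A)-g_k(\cdot,B))^2]$ that the paper uses. You recover it because each $j$ is compared with only one of the two starting points. The paper instead combines the exact identity with a cruder symmetric bound involving both starting points.
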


Related martingale- and coupling-based estimates for general functions of
Markov paths appear in
\cite{kontorovich2008concentration,redig2009concentration}.
Proposition~\ref{prop:path-variance} is tailored to the present comparison
argument: it is formulated in terms of the averaged local coordinate
oscillations
\[
\mathbb{E}_{\mu_n}\bigl[(\operatorname{osc}_i H)^2\bigr]
\]
and gives an explicit constant in terms of $p_*$ and $\delta(P)$.
We therefore include a direct proof.

The second estimate applies to functions that depend on the path only
through its count vector. It compares the same intermediate quantity
with the endpoint Dirichlet form of the resampled count kernel.

\begin{proposition}[Coordinate oscillations controlled by the count-chain Dirichlet form]
\label{prop:coordinate-oscillation-dirichlet-bound}
For every \(F:\Omega_n^{\#}\to\mathbb{R}\), let
\[
H:=F\circ N.
\]
Then
\begin{equation}
\sum_{i=1}^n
\mathbb{E}_{\mu_n}
\!\left[
  \bigl(\operatorname{osc}_i H\bigr)^2
\right]
\le
\frac{4n}{p_\ast^3}\,
\mathcal{E}_{\widetilde{P}_n}(F,F).
\label{eq:coordinate-oscillation-dirichlet-bound}
\end{equation}
\end{proposition}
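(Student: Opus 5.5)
The plan is to prove the bound coordinate by coordinate: for each fixed $i$,
\[
\mathbb{E}_{\mu_n}\bigl[(\operatorname{osc}_i H)^2\bigr]\le \frac{4}{p_*^3}\,\mathcal{E}_{\widetilde P_n}(F,F),
\]
and then sum over $1\le i\le n$. Three ingredients will be used: an elementary pointwise bound on the oscillation of $H=F\circ N$, a cyclic rotation of the path that moves coordinate $i$ to the left endpoint, and the endpoint representation of Proposition~\ref{prop:endpoint-dirichlet-form}. Under \eqref{eq:strict-positivity} every path has positive mass, so all sums range over all of $\Omega^n$ and $\Omega$.

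\emph{Step 1 (pointwise bound).} Fix $u$ and set $\eta=N(u)$. Changing coordinate $i$ to $a$ produces a path with count vector $\eta-e_{u_i}+e_a$. Write $D_a(u):=F(\eta-e_{u_i}+e_a)-F(\eta)$; note that $D_{u_i}(u)=0$. For $a\ne b$,
\[
|D_a-D_b|^2\le 2D_a^2+2D_b^2\le 2\sum_{c\in\Omega}D_c^2 ,
\]
and the case $a=b$ contributes zero. Hence
\[
(\operatorname{osc}_iH(u))^2\le 2\sum_{a\in\Omega}\bigl[F(N(u)-e_{u_i}+e_a)-F(N(u))\bigr]^2 .
\]
Using the sum over distinct $a,b$ here, rather than $4\max_a D_a^2$, is what gives the constant $4$ rather than $8$ at the end.

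\emph{Step 2 (rotation and comparison of path weights).} Let $R_i$ be the cyclic rotation
\[
R_iu=v:=(u_i,u_{i+1},\dots,u_n,u_1,\dots,u_{i-1}).
\]
It is a bijection of $\Omega^n$, it preserves the count vector, and it places $u_i$ at $v_1$. For $i=1$ it is the identity, and the comparison below is trivial. For $i\ge2$, comparing \eqref{eq:stationary-path-law} for $u$ and $v$, the two products share all transitions except that $u$ contains $p(u_{i-1},u_i)$ while $v$ contains $p(u_n,u_1)$; the initial weights are $\pi(u_1)$ and $\pi(u_i)$ respectively. Therefore
\[
\mu_n(u)=\mu_n(v)\,\frac{\pi(u_1)\,p(u_{i-1},u_i)}{\pi(u_i)\,p(u_n,u_1)}\le \frac{\mu_n(v)}{p_*^2}.
\]
This uses $p(\cdot,\cdot)\le1$, $p(u_n,u_1)\ge p_*$, and $\pi(u_i)=\sum_y\pi(y)p(y,u_i)\ge p_*$ together with $\pi(u_1)\le1$.

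\emph{Step 3 (matching the endpoint form).} Since $v_1=u_i$, we have $N(u)-e_{u_i}+e_a=N(v)-e_{v_1}+e_a$. Because $p(v_n,a)\ge p_*$, each squared difference satisfies
\[
[\,\cdot\,]^2\le p_*^{-1}\,p(v_n,a)[\,\cdot\,]^2 .
\]
Combining Steps 1 and 2 and changing variables $u\mapsto v=R_iu$ gives
\[
\mathbb{E}_{\mu_n}[(\operatorname{osc}_iH)^2]\le \frac{2}{p_*^3}\sum_{v}\mu_n(v)\sum_{z}p(v_n,z)\bigl[F(N(v)-e_{v_1}+e_z)-F(N(v))\bigr]^2 .
\]
By \eqref{eq:endpoint-dirichlet-form}, the right-hand side equals $\frac{4}{p_*^3}\mathcal{E}_{\widetilde P_n}(F,F)$. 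Summing over $i$ yields \eqref{eq:coordinate-oscillation-dirichlet-bound}.

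The main point requiring care is Step 2: verifying that rotation changes only two transition factors and the initial weight, and tracking the resulting ratio with exactly the powers of $p_*$ claimed. The remaining steps are bookkeeping.
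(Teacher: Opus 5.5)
Your proof is correct and follows essentially the same route as the paper: the pointwise bound $(\operatorname{osc}_iH)^2\le \frac{2}{p_*}\sum_{z}p(u_{i-1},z)\bigl[F(N(u)-e_{u_i}+e_z)-F(N(u))\bigr]^2$ (which the paper derives from a reference-state oscillation lemma, whereas you derive it from your unweighted Step~1 followed by inserting the weight), then the cyclic rotation with density ratio bounded by $p_*^{-2}$, then the endpoint representation of the Dirichlet form. The constants match exactly.
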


The two propositions isolate the two different mechanisms in the
argument. Proposition~\ref{prop:path-variance} is entirely a statement
about the stationary path law \(\mu_n\). Its proof reveals the path from
left to right and couples two possible conditional futures; the
contraction coefficient \(\delta(P)\) makes their disagreement
probabilities decay geometrically.

Proposition~\ref{prop:coordinate-oscillation-dirichlet-bound} is where the count map and
the particular update of \(\widetilde{P}_n\) enter. Replacing \(u_i\) by
a state \(z\) changes the count vector from \(N(u)\) to
\[
N(u)-e_{u_i}+e_z.
\]
A cyclic rotation that places \(u_i\) at the left endpoint turns this
replacement into the endpoint update appearing in the Dirichlet-form
representation from Section~\ref{sec:count-kernel}. In particular, the factor \(n\) in the
final bound enters only when the \(n\) possible replacement positions
are summed in Proposition~\ref{prop:coordinate-oscillation-dirichlet-bound}.

Before proving Propositions~\ref{prop:path-variance} and \ref{prop:coordinate-oscillation-dirichlet-bound} in Sections~\ref{subsec:path-variance} and \ref{subsec:coordinate-oscillations-to-dirichlet}, we show how they imply Theorem~\ref{thm:main-gap-lower-bound}.

\begin{proof}[Proof of Theorem~\ref{thm:main-gap-lower-bound}]
Let \(F:\Omega_n^{\#}\to\mathbb{R}\), and set
\[
H:=F\circ N.
\]
Since \(\widetilde{\mu}_n\) is the push-forward of \(\mu_n\) under
\(N\),
\begin{equation}
\operatorname{Var}_{\widetilde{\mu}_n}(F)
=
\operatorname{Var}_{\mu_n}(H).
\label{eq:variance-pushforward}
\end{equation}
Applying Propositions~\ref{prop:path-variance} and
\ref{prop:coordinate-oscillation-dirichlet-bound}, we obtain
\begin{align*}
\operatorname{Var}_{\widetilde{\mu}_n}(F)
&\le
\frac{1}
{p_\ast\bigl(1-\sqrt{\delta(P)}\bigr)^2}
\sum_{i=1}^n
\mathbb{E}_{\mu_n}
\!\left[
  \bigl(\operatorname{osc}_i H\bigr)^2
\right]
\\
&\le
\frac{4n}
{p_\ast^4\bigl(1-\sqrt{\delta(P)}\bigr)^2}
\mathcal{E}_{\widetilde{P}_n}(F,F).
\end{align*}
This proves the asserted Poincar\'e inequality. Taking the infimum in
the variational definition of
\(\operatorname{Gap}(\widetilde P_n)\) gives the claimed spectral-gap
bound.

\end{proof}

It remains to prove Propositions~\ref{prop:path-variance} and
\ref{prop:coordinate-oscillation-dirichlet-bound}. We begin with the variance estimate on
the stationary path space.

\subsection{A variance estimate for the stationary path law}
\label{subsec:path-variance}

We now prove Proposition~\ref{prop:path-variance}. The proof reveals the
path from left to right and uses a maximal coupling to compare two possible
conditional futures. We then pass from one coupled path to the other by
changing one coordinate at a time.

The intermediate paths arising from this coordinate-by-coordinate
comparison need not themselves have a conditional Markov path law. The
only additional point in the argument is therefore to compare their
probabilities with those of paths drawn from the original conditional law.
At the point where two pieces of the coupled trajectories are joined, at
most one transition factor is missing. The lower bound \(p_\ast\) controls
the cost of restoring this factor.

\begin{proof}[Proof of Proposition~\ref{prop:path-variance}]
Let
\[
U=(U_1,\ldots,U_n)
\]
have law \(\mu_n\), and set
\[
\alpha:=\sqrt{\delta(P)}\in[0,1).
\]
For \(0\le i\le n\), let
\[
\mathcal F_i:=\sigma(U_1,\ldots,U_i),
\]
where \(\mathcal F_0\) is the trivial \(\sigma\)-field, and define the
martingale differences
\[
\Delta_i
:=
\mathbb E_{\mu_n}\!\left[H(U)\mid\mathcal F_i\right]
-
\mathbb E_{\mu_n}\!\left[H(U)\mid\mathcal F_{i-1}\right],
\qquad 1\le i\le n.
\]
The orthogonality of martingale differences gives
\begin{equation}
\operatorname{Var}_{\mu_n}(H)
=
\sum_{i=1}^n
\mathbb E_{\mu_n}[\Delta_i^2].
\label{eq:doob-variance-decomposition}
\end{equation}

We shall prove that, for every \(1\le i\le n\),
\begin{equation}
\mathbb E_{\mu_n}[\Delta_i^2]
\le
\frac{1}{p_\ast(1-\alpha)}
\sum_{j=i}^n
\alpha^{j-i}
\mathbb E_{\mu_n}
\!\left[
  \bigl(\operatorname{osc}_j H\bigr)^2
\right].
\label{eq:martingale-increment-bound}
\end{equation}
The proposition will then follow by summing this estimate over \(i\).

\medskip
\noindent\textbf{Step 1. Conditioning on the revealed past.}

Fix \(i\), and fix a value
\[
w=(w_1,\ldots,w_{i-1})
\]
of \((U_1,\ldots,U_{i-1})\) having positive probability. When \(i=1\),
the past \(w\) is empty. Let \(\rho_w\) denote the conditional law of
\(U_i\) given this past, and define
\[
G_w(c)
:=
\mathbb E_{\mu_n}
\!\left[
  H(U)
  \,\middle|\,
  U_1=w_1,\ldots,U_{i-1}=w_{i-1},\ U_i=c
\right],
\qquad c\in\Omega.
\]
Strict positivity ensures that every conditional expectation above is
well defined.

If \(A\) and \(B\) are independent random variables with law \(\rho_w\),
then
\begin{equation}
\begin{aligned}
\mathbb E_{\mu_n}
\!\left[
  \Delta_i^2
  \,\middle|\,
  U_1=w_1,\ldots,U_{i-1}=w_{i-1}
\right]
=
\operatorname{Var}_{\rho_w}(G_w)
=
\frac12\,
\mathbb E\!\left[
  \bigl(G_w(A)-G_w(B)\bigr)^2
\right].
\end{aligned}
\label{eq:conditional-martingale-variance}
\end{equation}
It therefore remains to control \(G_w(a)-G_w(b)\) for fixed
\(a,b\in\Omega\).

\medskip
\noindent\textbf{Step 2. Coupling two conditional futures.}

To estimate \(G_w(a)-G_w(b)\), we place the two conditional
futures on a common probability space. The coupling is chosen so that
each marginal trajectory has the required conditional Markov path law and, once
the two trajectories meet, they remain together.

Fix \(a,b\in\Omega\). For \(c\in\{a,b\}\), the Markov property of
the stationary path law gives
\[
\begin{aligned}
&\mu_n
\!\left(
  U_{i+1}=x_{i+1},\ldots,U_n=x_n
  \,\middle|\,
  U_1=w_1,\ldots,U_{i-1}=w_{i-1},\ U_i=c
\right)
\\
&\qquad=
\prod_{r=i}^{n-1}p(x_r,x_{r+1}),
\qquad x_i:=c.
\end{aligned}
\]
Thus the conditional law entering \(G_w(c)\) is obtained by fixing
the past \(w\), placing the chain at \(c\) at time \(i\), and then
evolving with transition kernel \(P\).

We now construct two random paths
\[
\begin{aligned}
U^a
&=
(w_1,\ldots,w_{i-1},a,U_{i+1}^a,\ldots,U_n^a),
\\
U^b
&=
(w_1,\ldots,w_{i-1},b,U_{i+1}^b,\ldots,U_n^b),
\end{aligned}
\]
on the same probability space. Set $U_i^a=a,
U_i^b=b$.

We use the standard Markovian construction in which the two one-step
transition laws are maximally coupled at each time; see, for example,
\cite[Proposition~4.7 and Section~5.1]{levin2026markov}.
For every \(x,y\in\Omega\), choose a maximal coupling \(Q_{x,y}\)
of \(P(x,\cdot)\) and \(P(y,\cdot)\). Thus
\[
\sum_{y'\in\Omega}Q_{x,y}(x',y')
=
p(x,x'),
\qquad
\sum_{x'\in\Omega}Q_{x,y}(x',y')
=
p(y,y'),
\]
and
\[
Q_{x,y}
\bigl\{
  (x',y')\in\Omega^2:x'\ne y'
\bigr\}
=
\left\|P(x,\cdot)-P(y,\cdot)\right\|_{\mathrm{TV}}
\le
\delta(P).
\]

For \(t\ge i\), let
\[
\mathcal G_t
:=
\sigma
\bigl(
  U_i^a,U_i^b,\ldots,U_t^a,U_t^b
\bigr)
\]
be the history of the coupled trajectories up to time \(t\). We define
the future coordinates recursively by requiring that, for
\(t=i,\ldots,n-1\) and \(x',y'\in\Omega\),
\[
\mathbb P
\!\left(
  U_{t+1}^a=x',\ U_{t+1}^b=y'
  \,\middle|\,
  \mathcal G_t
\right)
=
Q_{U_t^a,U_t^b}(x',y').
\]
Taking the two marginals in this identity gives
\[
\mathbb P
\!\left(
  U_{t+1}^a=x'
  \,\middle|\,
  \mathcal G_t
\right)
=
p(U_t^a,x')
\]
and
\[
\mathbb P
\!\left(
  U_{t+1}^b=y'
  \,\middle|\,
  \mathcal G_t
\right)
=
p(U_t^b,y').
\]
Iterating these transition identities shows that, for
\(c\in\{a,b\}\),
\[
\mathbb P
\!\left(
  U_{i+1}^c=x_{i+1},\ldots,U_n^c=x_n
\right)
=
\prod_{r=i}^{n-1}p(x_r,x_{r+1}),
\qquad x_i:=c.
\]
Consequently,
\[
U^a
\sim
\mu_n
\!\left(
  \,\cdot\,
  \middle|\,
  U_1=w_1,\ldots,U_{i-1}=w_{i-1},\ U_i=a
\right),
\]
and \(U^b\) has the analogous conditional law with \(b\) in place of
\(a\). In particular,
\begin{equation}
G_w(a)=\mathbb E[H(U^a)],
\qquad
G_w(b)=\mathbb E[H(U^b)].
\label{eq:conditional-expectations-under-coupling}
\end{equation}

If \(U_t^a=U_t^b=x\), then the two one-step distributions being
coupled are both \(P(x,\cdot)\). Their total-variation distance is zero,
so maximality implies $U_{t+1}^a=U_{t+1}^b$ almost surely.
Hence, once the two trajectories meet, they remain equal at all later
times. More generally, the maximal-coupling property gives
\[
\begin{aligned}
\mathbb P
\!\left(
  U_{t+1}^a\ne U_{t+1}^b
  \,\middle|\,
  \mathcal G_t
\right)
=
\left\|
  P(U_t^a,\cdot)-P(U_t^b,\cdot)
\right\|_{\mathrm{TV}}
\le
\delta(P)\,
\mathbf 1_{\{U_t^a\ne U_t^b\}}.
\end{aligned}
\]
Taking expectations yields
\[
\mathbb P(U_{t+1}^a\ne U_{t+1}^b)
\le
\delta(P)\,
\mathbb P(U_t^a\ne U_t^b).
\]
Since
\(\mathbb P(U_i^a\ne U_i^b)\le1\), induction gives
\begin{equation}
\mathbb P(U_j^a\ne U_j^b)
\le
\delta(P)^{j-i},
\qquad j\ge i.
\label{eq:coupled-disagreement}
\end{equation}

All probabilities and expectations in the next two steps refer to this
coupling, unless a subscript is displayed explicitly.

\medskip
\noindent\textbf{Step 3. Changing one coordinate at a time.}

For \(i-1\le r\le n\), define the intermediate path
\[
V^{(r)}
:=
(w_1,\ldots,w_{i-1},
 U_i^b,\ldots,U_r^b,
 U_{r+1}^a,\ldots,U_n^a),
\]
with the convention that either displayed block is omitted when it is
empty. Thus
\[
V^{(i-1)}=U^a,
\qquad
V^{(n)}=U^b.
\]
Telescoping gives
\[
H(U^a)-H(U^b)
=
\sum_{j=i}^n
\left(
  H(V^{(j-1)})-H(V^{(j)})
\right).
\]
The two paths in the \(j\)-th summand differ only at coordinate \(j\).
Hence, defining
\[
O_j
:=
\bigl(\operatorname{osc}_jH\bigr)(V^{(j-1)}),
\]
we have
\[
\left|
  H(V^{(j-1)})-H(V^{(j)})
\right|
\le
\mathbf 1_{\{U_j^a\ne U_j^b\}}\,O_j.
\]

By \eqref{eq:conditional-expectations-under-coupling},
\[
G_w(a)-G_w(b)
=
\mathbb E\!\left[H(U^a)-H(U^b)\right].
\]
It follows from Cauchy--Schwarz and
\eqref{eq:coupled-disagreement} that
\begin{align*}
|G_w(a)-G_w(b)|
&\le
\sum_{j=i}^n
\mathbb E
\!\left[
  \mathbf 1_{\{U_j^a\ne U_j^b\}}\,O_j
\right]
\\
&\le
\sum_{j=i}^n
\mathbb P(U_j^a\ne U_j^b)^{1/2}
\bigl(\mathbb E[O_j^2]\bigr)^{1/2}
\\
&\le
\sum_{j=i}^n
\alpha^{j-i}
\bigl(\mathbb E[O_j^2]\bigr)^{1/2}.
\end{align*}
Applying Cauchy--Schwarz once more to the sum over \(j\), we obtain
\begin{align}
|G_w(a)-G_w(b)|^2
&\le
\left(
  \sum_{j=i}^n\alpha^{j-i}
\right)
\left(
  \sum_{j=i}^n
  \alpha^{j-i}\mathbb E[O_j^2]
\right)
\nonumber\\
&\le
\frac{1}{1-\alpha}
\sum_{j=i}^n
\alpha^{j-i}\mathbb E[O_j^2].
\label{eq:coupled-future-telescoping}
\end{align}

\medskip
\noindent\textbf{Step 4. Comparing the intermediate-path distribution
with the corresponding conditional law.}

For \(c\in\Omega\) and \(j\ge i\), write
\[
L_j(w,c)
:=
\mathbb E_{\mu_n}
\!\left[
  \bigl(\operatorname{osc}_jH(U)\bigr)^2
  \,\middle|\,
  U_1=w_1,\ldots,U_{i-1}=w_{i-1},\ U_i=c
\right].
\]

When \(j=i\), the intermediate path
\[
V^{(i-1)}=U^a
\]
has the original conditional path law started from \(a\). Therefore
\begin{equation}
\mathbb E[O_i^2]
=
L_i(w,a).
\label{eq:first-intermediate-path}
\end{equation}

Now suppose \(j>i\). The intermediate path \(V^{(j-1)}\) uses the
\(b\)-trajectory through coordinate \(j-1\) and the \(a\)-trajectory
from coordinate \(j\) onward. We compare its distribution with the
conditional path law given the same past \(w\) and the state \(b\) at
time \(i\).

Fix
\[
v
=
(w_1,\ldots,w_{i-1},b,v_{i+1},\ldots,v_n)
\in\Omega^n,
\]
and put \(v_i:=b\). Our goal is to prove the pointwise domination
\[
\mathbb P\bigl(V^{(j-1)}=v\bigr)
\le
\frac{1}{p_\ast}
\mu_n
\!\left(
  v
  \,\middle|\,
  U_1=w_1,\ldots,U_{i-1}=w_{i-1},\ U_i=b
\right).
\]

We first estimate the probability on the left-hand side. Let
\[
B_{j-1}(v)
:=
\{
  U_i^b=v_i,\ldots,U_{j-1}^b=v_{j-1}
\}.
\]
Because the \(U^b\)-trajectory evolves with transition kernel \(P\),
\begin{equation}
\mathbb P(B_{j-1}(v))
=
\prod_{r=i}^{j-2}p(v_r,v_{r+1}).
\label{eq:b-prefix-probability}
\end{equation}
In particular, this probability is positive under the strict
positivity assumption.

By the definition of \(V^{(j-1)}\),
\[
\{
  V^{(j-1)}=v
\}
=
B_{j-1}(v)
\cap
\{
  U_j^a=v_j,\ldots,U_n^a=v_n
\}.
\]
For \(j\le r\le n\), define
\[
C_r(v)
:=
B_{j-1}(v)
\cap
\{
  U_j^a=v_j,\ldots,U_r^a=v_r
\}.
\]
Then
\[
C_n(v)=\{V^{(j-1)}=v\},
\qquad
C_j(v)\subseteq B_{j-1}(v).
\]

For \(j\le r<n\), the event \(C_r(v)\) belongs to
\(\mathcal G_r\). Hence the conditional transition identity from
Step~2 gives
\begin{align*}
\mathbb P(C_{r+1}(v))
&=
\mathbb E
\!\left[
  \mathbf 1_{C_r(v)}
  \mathbb P
  \!\left(
    U_{r+1}^a=v_{r+1}
    \,\middle|\,
    \mathcal G_r
  \right)
\right]
\\
&=
\mathbb E
\!\left[
  \mathbf 1_{C_r(v)}
  p(U_r^a,v_{r+1})
\right]
\\
&=
p(v_r,v_{r+1})\,
\mathbb P(C_r(v)).
\end{align*}
Iterating this identity from \(r=j\) to \(r=n-1\), we obtain
\[
\mathbb P(V^{(j-1)}=v)
=
\mathbb P(C_j(v))
\prod_{r=j}^{n-1}p(v_r,v_{r+1}).
\]
Since \(C_j(v)\subseteq B_{j-1}(v)\),
\[
\mathbb P(C_j(v))
\le
\mathbb P(B_{j-1}(v))
=
\prod_{r=i}^{j-2}p(v_r,v_{r+1}).
\]
Consequently,
\begin{equation}
\mathbb P(V^{(j-1)}=v)
\le
\left(
  \prod_{r=i}^{j-2}p(v_r,v_{r+1})
\right)
\left(
  \prod_{r=j}^{n-1}p(v_r,v_{r+1})
\right).
\label{eq:intermediate-path-upper-bound}
\end{equation}

On the other hand, under the conditional path law given the same past
\(w\) and \(U_i=b\), the path \(v\) must also make the transition from
\(v_{j-1}\) to \(v_j\). Therefore
\begin{align}
&
\mu_n
\!\left(
  v
  \,\middle|\,
  U_1=w_1,\ldots,U_{i-1}=w_{i-1},\ U_i=b
\right)
\nonumber\\
&\qquad=
\left(
  \prod_{r=i}^{j-2}p(v_r,v_{r+1})
\right)
p(v_{j-1},v_j)
\left(
  \prod_{r=j}^{n-1}p(v_r,v_{r+1})
\right).
\label{eq:conditional-path-probability}
\end{align}
Empty products are understood to be one. Since
\[
p(v_{j-1},v_j)\ge p_\ast,
\]
equations \eqref{eq:intermediate-path-upper-bound} and
\eqref{eq:conditional-path-probability} imply
\begin{equation}
\mathbb P(V^{(j-1)}=v)
\le
\frac{1}{p_\ast}
\mu_n
\!\left(
  v
  \,\middle|\,
  U_1=w_1,\ldots,U_{i-1}=w_{i-1},\ U_i=b
\right).
\label{eq:intermediate-path-comparison}
\end{equation}

Integrating the nonnegative function
\(\bigl(\operatorname{osc}_jH\bigr)^2\) in
\eqref{eq:intermediate-path-comparison} gives
\begin{equation}
\mathbb E[O_j^2]
\le
\frac{1}{p_\ast}L_j(w,b),
\qquad j>i.
\label{eq:intermediate-oscillation-comparison}
\end{equation}
Combining \eqref{eq:first-intermediate-path} and
\eqref{eq:intermediate-oscillation-comparison}, and using
\(p_\ast\le1\), yields the uniform estimate
\begin{equation}
\mathbb E[O_j^2]
\le
\frac{1}{p_\ast}
\bigl(L_j(w,a)+L_j(w,b)\bigr),
\qquad j\ge i.
\label{eq:uniform-intermediate-oscillation-bound}
\end{equation}

Substituting \eqref{eq:uniform-intermediate-oscillation-bound} into
\eqref{eq:coupled-future-telescoping}, we obtain
\begin{equation}
|G_w(a)-G_w(b)|^2
\le
\frac{1}{p_\ast(1-\alpha)}
\sum_{j=i}^n
\alpha^{j-i}
\bigl(L_j(w,a)+L_j(w,b)\bigr).
\label{eq:conditional-future-difference}
\end{equation}

\medskip
\noindent\textbf{Step 5. Averaging the conditional bound and summing
the increment estimates.}

Average \eqref{eq:conditional-future-difference} over independent
\(A,B\sim\rho_w\). Using
\eqref{eq:conditional-martingale-variance}, we obtain
\begin{align*}
&
\mathbb E_{\mu_n}
\!\left[
  \Delta_i^2
  \,\middle|\,
  U_1=w_1,\ldots,U_{i-1}=w_{i-1}
\right]
\\
&\qquad\le
\frac{1}{2p_\ast(1-\alpha)}
\sum_{j=i}^n
\alpha^{j-i}
\mathbb E
\!\left[
  L_j(w,A)+L_j(w,B)
\right]
\\
&\qquad=
\frac{1}{p_\ast(1-\alpha)}
\sum_{j=i}^n
\alpha^{j-i}
\sum_{c\in\Omega}\rho_w(c)L_j(w,c).
\end{align*}
By the definition of \(L_j\) and the tower property,
\[
\sum_{c\in\Omega}\rho_w(c)L_j(w,c)
=
\mathbb E_{\mu_n}
\!\left[
  \bigl(\operatorname{osc}_jH(U)\bigr)^2
  \,\middle|\,
  U_1=w_1,\ldots,U_{i-1}=w_{i-1}
\right].
\]
Taking expectation over the revealed past proves
\eqref{eq:martingale-increment-bound}.

Finally, summing \eqref{eq:martingale-increment-bound} over \(i\) and
using \eqref{eq:doob-variance-decomposition}, we obtain
\begin{align*}
\operatorname{Var}_{\mu_n}(H)
&\le
\frac{1}{p_\ast(1-\alpha)}
\sum_{i=1}^n
\sum_{j=i}^n
\alpha^{j-i}
\mathbb E_{\mu_n}
\!\left[
  \bigl(\operatorname{osc}_jH\bigr)^2
\right]
\\
&=
\frac{1}{p_\ast(1-\alpha)}
\sum_{j=1}^n
\left(
  \sum_{i=1}^j\alpha^{j-i}
\right)
\mathbb E_{\mu_n}
\!\left[
  \bigl(\operatorname{osc}_jH\bigr)^2
\right]
\\
&\le
\frac{1}{p_\ast(1-\alpha)^2}
\sum_{j=1}^n
\mathbb E_{\mu_n}
\!\left[
  \bigl(\operatorname{osc}_jH\bigr)^2
\right].
\end{align*}
Since \(\alpha=\sqrt{\delta(P)}\), this is precisely
\eqref{eq:path-variance-bound}.
\end{proof}

\subsection{From coordinate oscillations to the count-chain Dirichlet form}
\label{subsec:coordinate-oscillations-to-dirichlet}

We now prove
Proposition~\ref{prop:coordinate-oscillation-dirichlet-bound}.
The endpoint representation
\eqref{eq:endpoint-dirichlet-form} expresses the Dirichlet form as an average of squared updates at the left endpoint, with the appended state sampled from the transition row at the right endpoint. For an arbitrary coordinate $i$, a cyclic rotation places $u_i$ at the left endpoint and $u_{i-1}$ at the right endpoint. The proof has two ingredients: a pointwise oscillation bound for the corresponding one-site replacement and a density comparison under cyclic rotation.

We first isolate the pointwise ingredient. The following elementary inequality on a finite set bounds the range of a function by its weighted squared deviations from its value at a fixed reference point.

\begin{lemma}[Oscillation bound from a reference state]
\label{lem:reference-state-oscillation-bound}
Let \(q\) be a probability measure on a finite set \(S\), and suppose
\[
    q_{\min}:=\min_{x\in S}q(x)>0.
\]
Then, for every \(h:S\to\mathbb{R}\) and every \(a_0\in S\),
\begin{equation}
    \left(
        \max_{x\in S}h(x)-\min_{x\in S}h(x)
    \right)^2
    \le
    \frac{2}{q_{\min}}
    \sum_{x\in S}
    q(x)\bigl(h(x)-h(a_0)\bigr)^2.
    \label{eq:reference-state-oscillation-bound}
\end{equation}
\end{lemma}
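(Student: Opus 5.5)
Choose maximizer \(x^+\) and minimizer \(x^-\) of \(h\) on \(S\). The triangle inequality through the reference point gives
\[
\max_S h-\min_S h
=
\bigl(h(x^+)-h(a_0)\bigr)+\bigl(h(a_0)-h(x^-)\bigr)
\le
|h(x^+)-h(a_0)|+|h(x^-)-h(a_0)|.
\]
Squaring and applying \((s+t)^2\le 2(s^2+t^2)\) yields
\[
\Bigl(\max_S h-\min_S h\Bigr)^2
\le
2\bigl(h(x^+)-h(a_0)\bigr)^2+2\bigl(h(x^-)-h(a_0)\bigr)^2 .
\]

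Next, each squared deviation is converted into a weighted one: for any \(x\in S\), \(q(x)/q_{\min}\ge1\), so
\[
\bigl(h(x)-h(a_0)\bigr)^2\le \frac{q(x)}{q_{\min}}\bigl(h(x)-h(a_0)\bigr)^2 .
\]
If \(x^+\ne x^-\), the two terms correspond to distinct points of \(S\). Their sum is therefore at most \(q_{\min}^{-1}\sum_{x\in S}q(x)(h(x)-h(a_0))^2\), because every omitted summand is nonnegative. Multiplying by \(2\) gives \eqref{eq:reference-state-oscillation-bound}. If \(x^+=x^-\), then \(h\) is constant, the left-hand side is zero, and there is nothing to prove. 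The cases \(a_0=x^\pm\) cause no trouble, since the corresponding term then simply vanishes.

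There is no real obstacle here. The only point needing care is to avoid double-counting a single summand when \(x^+=x^-\) or when one of them equals \(a_0\), and the case split above handles this. The factor \(2\) comes precisely from the elementary inequality \((s+t)^2\le 2(s^2+t^2)\).
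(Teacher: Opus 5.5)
Your proposal is correct and follows essentially the same route as the paper: pick a maximizer and a minimizer (distinct when $h$ is nonconstant), keep only those two summands, bound each weight below by $q_{\min}$, and use $(s+t)^2\le 2(s^2+t^2)$. The paper orders the steps differently, lower-bounding the weighted sum first and then applying the elementary inequality, but the argument is the same.
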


\begin{proof}
The claim is immediate if \(h\) is constant. Otherwise, choose
\(b,c\in S\) such that
\[
    h(b)=\max_{x\in S}h(x),
    \qquad
    h(c)=\min_{x\in S}h(x).
\]
Since \(b\ne c\),
\begin{align*}
    \sum_{x\in S}
    q(x)\bigl(h(x)-h(a_0)\bigr)^2
    &\ge
    q_{\min}
    \left[
        \bigl(h(b)-h(a_0)\bigr)^2
        +
        \bigl(h(c)-h(a_0)\bigr)^2
    \right]
    \\
    &\ge
    \frac{q_{\min}}{2}
    \bigl(h(b)-h(c)\bigr)^2,
\end{align*}
where the last inequality follows from
\[
    (x-y)^2\le 2x^2+2y^2.
\]
Rearranging proves
\eqref{eq:reference-state-oscillation-bound}.
\end{proof}

\begin{proof}[Proof of Proposition~\ref{prop:coordinate-oscillation-dirichlet-bound}]
Fix \(F:\Omega_n^\#\to\mathbb{R}\), and set
\[
    H:=F\circ N.
\]

For a path \(u=(u_1,\ldots,u_n)\in\Omega^n\), define the endpoint-update
cost
\begin{equation}
    D_1(u)
    :=
    \sum_{z\in\Omega}
    p(u_n,z)
    \left[
        F\bigl(N(u)-e_{u_1}+e_z\bigr)
        -
        F\bigl(N(u)\bigr)
    \right]^2.
    \label{eq:endpoint-update-cost}
\end{equation}
By the endpoint representation
\eqref{eq:endpoint-dirichlet-form},
\begin{equation}
    \mathcal{E}_{\widetilde P_n}(F,F)
    =
    \frac12\,
    \mathbb{E}_{\mu_n}[D_1].
    \label{eq:endpoint-cost-dirichlet-form}
\end{equation}

To obtain the corresponding quantity at an arbitrary coordinate,
use the cyclic convention \(u_0:=u_n\), and for \(1\le i\le n\) define
\begin{equation}
    \theta_i(u_1,\ldots,u_n)
    :=
    (u_i,u_{i+1},\ldots,u_n,u_1,\ldots,u_{i-1}).
    \label{eq:cyclic-rotation}
\end{equation}
The rotation preserves the count vector and places \(u_i\) and
\(u_{i-1}\) at the two endpoints:
\[
    N(\theta_i u)=N(u),
    \qquad
    (\theta_i u)_1=u_i,
    \qquad
    (\theta_i u)_n=u_{i-1}.
\]
We therefore define
\begin{align}
    D_i(u)
    &:=
    D_1(\theta_i u)
    \notag\\
    &=
    \sum_{z\in\Omega}
    p(u_{i-1},z)
    \left[
        F\bigl(N(u)-e_{u_i}+e_z\bigr)
        -
        F\bigl(N(u)\bigr)
    \right]^2.
    \label{eq:rotated-endpoint-cost}
\end{align}
Thus \(D_i\) is the endpoint-update quantity evaluated on the
cyclically rotated path \(\theta_i u\).

\medskip
\noindent
\textbf{Step 1: Pointwise control of a coordinate oscillation.}

We first prove that, for every \(u\in\Omega^n\) and
\(1\le i\le n\),
\begin{equation}
    \bigl(\operatorname{osc}_i H(u)\bigr)^2
    \le
    \frac{2}{p_*}\,D_i(u).
    \label{eq:pointwise-oscillation-cost-bound}
\end{equation}

Fix \(u\in\Omega^n\) and \(1\le i\le n\), and define
\[
    h(z)
    :=
    F\bigl(N(u)-e_{u_i}+e_z\bigr),
    \qquad z\in\Omega.
\]
Replacing the \(i\)-th coordinate of \(u\) by \(z\) changes its count
vector from \(N(u)\) to \(N(u)-e_{u_i}+e_z\). Therefore,
\begin{equation}
    \operatorname{osc}_i H(u)
    =
    \max_{z\in\Omega}h(z)
    -
    \min_{z\in\Omega}h(z).
    \label{eq:coordinate-oscillation-as-range}
\end{equation}
Moreover,
\[
    h(u_i)=F\bigl(N(u)\bigr).
\]

We apply
Lemma~\ref{lem:reference-state-oscillation-bound}
with
\[
    S=\Omega,
    \qquad
    q(z)=p(u_{i-1},z),
    \qquad
    a_0=u_i.
\]
Since
\[
    \min_{z\in\Omega}p(u_{i-1},z)\ge p_*,
\]
equations
\eqref{eq:reference-state-oscillation-bound},
\eqref{eq:coordinate-oscillation-as-range},
and \eqref{eq:rotated-endpoint-cost} give
\[
    \bigl(\operatorname{osc}_i H(u)\bigr)^2
    \le
    \frac{2}{p_*}
    \sum_{z\in\Omega}
    p(u_{i-1},z)
    \left[
        F\bigl(N(u)-e_{u_i}+e_z\bigr)
        -
        F\bigl(N(u)\bigr)
    \right]^2
    =
    \frac{2}{p_*}\,D_i(u).
\]
This proves
\eqref{eq:pointwise-oscillation-cost-bound}.

\medskip
\noindent
\textbf{Step 2: Density comparison under cyclic rotation.}

We next compare the \(\mu_n\)-weights of \(u\) and \(\theta_i u\).
Using
\[
    \mu_n(u)
    =
    \pi(u_1)
    \prod_{r=1}^{n-1}p(u_r,u_{r+1}),
\]
and cancelling the common transition factors, we obtain
\begin{equation}
    \frac{\mu_n(\theta_i u)}{\mu_n(u)}
    =
    \frac{\pi(u_i)}{\pi(u_1)}
    \frac{p(u_n,u_1)}{p(u_{i-1},u_i)}.
    \label{eq:rotation-density-ratio}
\end{equation}
For \(i=1\), the right-hand side equals one under the convention
\(u_0=u_n\).

Stationarity and strict positivity imply that, for every \(x\in\Omega\),
\[
    \pi(x)
    =
    \sum_{y\in\Omega}\pi(y)p(y,x)
    \ge
    p_*\sum_{y\in\Omega}\pi(y)
    =
    p_*.
\]
Together with \(\pi(x)\le1\), \(p(x,y)\le1\), and
\(p(x,y)\ge p_*\), equation
\eqref{eq:rotation-density-ratio} gives the two-sided comparison
\begin{equation}
    p_*^2
    \le
    \frac{\mu_n(\theta_i u)}{\mu_n(u)}
    \le
    \frac{1}{p_*^2}.
    \label{eq:rotation-density-comparison}
\end{equation}
In particular, since \(\theta_i\) is a bijection of \(\Omega^n\),
\begin{align}
    \mathbb{E}_{\mu_n}[D_i]
    &=
    \sum_{u\in\Omega^n}
    \mu_n(u)D_1(\theta_i u)
    \notag\\
    &=
    \sum_{v\in\Omega^n}
    \mu_n(\theta_i^{-1}v)D_1(v)
    \notag\\
    &\le
    \frac{1}{p_*^2}
    \sum_{v\in\Omega^n}
    \mu_n(v)D_1(v)
    =
    \frac{1}{p_*^2}
    \mathbb{E}_{\mu_n}[D_1].
    \label{eq:rotated-cost-expectation-bound}
\end{align}

\medskip
\noindent
\textbf{Step 3: Summation and identification of the Dirichlet form.}

Taking expectation in
\eqref{eq:pointwise-oscillation-cost-bound}, summing over
\(1\le i\le n\), and applying
\eqref{eq:rotated-cost-expectation-bound}, we obtain
\begin{align*}
    \sum_{i=1}^n
    \mathbb{E}_{\mu_n}
    \left[
        \bigl(\operatorname{osc}_i H\bigr)^2
    \right]
    &\le
    \frac{2}{p_*}
    \sum_{i=1}^n
    \mathbb{E}_{\mu_n}[D_i]
    \\
    &\le
    \frac{2n}{p_*^3}
    \mathbb{E}_{\mu_n}[D_1].
\end{align*}
Finally, using
\eqref{eq:endpoint-cost-dirichlet-form},
\[
    \sum_{i=1}^n
    \mathbb{E}_{\mu_n}
    \left[
        \bigl(\operatorname{osc}_i H\bigr)^2
    \right]
    \le
    \frac{4n}{p_*^3}
    \mathcal{E}_{\widetilde P_n}(F,F).
\]
This is exactly
\eqref{eq:coordinate-oscillation-dirichlet-bound}.
\end{proof}

\section{Upper bounds for the spectral gap}
\label{sec:upper-bounds}

The upper bound follows from the variational definition of the spectral gap.
For an eigenvalue of \(P\), we construct a function on the count space and
compute its variance and Dirichlet energy exactly.

\begin{theorem}[An explicit upper bound]
\label{thm:explicit-upper-bound}
Let \(\lambda\in(-1,1)\) be an eigenvalue of \(P\). Then, for every
\(n\ge 2\),
\begin{equation}
\operatorname{Gap}(\widetilde P_n)
\le
\frac{1-\lambda^n}
{n\frac{1+\lambda}{1-\lambda}
 -\frac{2\lambda(1-\lambda^n)}{(1-\lambda)^2}}.
\label{eq:explicit-upper-bound}
\end{equation}
\end{theorem}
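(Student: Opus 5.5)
Take a right eigenfunction \(\phi\) of \(P\) with \(P\phi=\lambda\phi\). Reversibility lets us choose \(\phi\) real with \(\pi(\phi)=0\) (it is orthogonal to constants since \(\lambda\ne1\)) and \(\pi(\phi^2)=1\). Define the linear count statistic
\[
F(\eta):=\sum_{x\in\Om}\eta_x\phi(x),
\qquad\text{so that}\qquad
F(N(u))=\sum_{k=1}^n\phi(u_k).
\]
We use \(F\) as a test function in the variational definition \eqref{eq:prelim-gap}. The bound then reads \(\Gap(\tPn)\le \cE_{\tPn}(F,F)/\Var_{\tmun}(F)\), and both quantities can be computed exactly from the stationary chain.

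\textbf{Dirichlet form.} By \eqref{eq:prelim-stationary-pair} and the one-step identification \eqref{eq:one-step-identification}, the Dirichlet form equals \(\frac12\E[(F(Y_1)-F(Y_0))^2]\). The increment is \(F(Y_1)-F(Y_0)=\phi(X_{n+1})-\phi(X_1)\). The covariance structure is
\[
\E[\phi(X_s)\phi(X_t)]=\ip{\phi}{P^{|t-s|}\phi}_\pi=\lambda^{|t-s|}.
\]
Expanding the square gives \(\E[(\phi(X_{n+1})-\phi(X_1))^2]=2-2\lambda^n\), hence
\[
\cE_{\tPn}(F,F)=1-\lambda^n .
\]

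\textbf{Variance.} Since \(\E F=0\),
\[
\Var_{\tmun}(F)=\E\Bigl[\bigl(\textstyle\sum_{k=1}^n\phi(X_k)\bigr)^2\Bigr]
=\sum_{s,t=1}^n\lambda^{|s-t|}
=n+2\sum_{d=1}^{n-1}(n-d)\lambda^d .
\]
Evaluate the arithmetic–geometric sum:
\[
\sum_{d=1}^{n-1}(n-d)\lambda^d=\frac{n\lambda}{1-\lambda}-\frac{\lambda(1-\lambda^n)}{(1-\lambda)^2}.
\]
Substituting gives
\[
\Var_{\tmun}(F)=n\,\frac{1+\lambda}{1-\lambda}-\frac{2\lambda(1-\lambda^n)}{(1-\lambda)^2},
\]
which is exactly the denominator in \eqref{eq:explicit-upper-bound}.

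\textbf{Conclusion.} We must check that the variance is positive, so that \(F\) is an admissible test function. It is \(\E[(\sum_k\phi(X_k))^2]\ge0\), and we argue it is strictly positive as follows. The matrix \((\lambda^{|s-t|})_{s,t}\) is the covariance of a stationary AR(1) sequence. For \(|\lambda|<1\) this Kac–Murdock–Szegő matrix is positive definite, so the quadratic form at the all-ones vector is positive. Dividing the Dirichlet form by the variance then yields \eqref{eq:explicit-upper-bound}.

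The main care point is sign conventions and the closed-form summation; it is routine but must match the displayed formula exactly. Note also that for \(\lambda\in(-1,1)\) the bound behaves like \(\frac{(1-\lambda)}{(1+\lambda)n}\). This gives the \(O(1/n)\) order, provided \(P\) has such an eigenvalue, which holds whenever \(m\ge2\) and \(P\) is irreducible and aperiodic.
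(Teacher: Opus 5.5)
Your proposal is correct and follows the paper's proof essentially step for step: both use the same linear count statistic built from a centered, $\pi$-normalized real eigenfunction, with the same exact computations giving energy $1-\lambda^n$ and variance $n\frac{1+\lambda}{1-\lambda}-\frac{2\lambda(1-\lambda^n)}{(1-\lambda)^2}$. The differences are cosmetic. You compute the energy through the stationary pair $(Y_0,Y_1)$ rather than the endpoint sum, and you certify positivity of the variance through positive definiteness of $(\lambda^{|s-t|})_{s,t}$, whereas the paper deduces it from $1-\lambda^n>0$.
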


\begin{proof}
Choose a nonzero eigenfunction \(f:\Omega\to\mathbb R\) satisfying $Pf=\lambda f$. 
Since \(\pi\) is stationary for \(P\), $\pi(f)=\pi(Pf)=\lambda\pi(f)$.
As \(\lambda\ne 1\), it follows that \(\pi(f)=0\). After rescaling
\(f\), we may further assume that $\pi(f^2)=1$.

Define \(F:\Omega_{n,+}^{\#}\to\mathbb R\) by
\[
F(\eta):=\sum_{x\in\Omega}\eta_x f(x).
\]
Let \(U=(U_1,\ldots,U_n)\) have law \(\mu_n\). By the definition of
the count map,
\[
F(N(U))=\sum_{i=1}^n f(U_i).
\]
Since \(\widetilde\mu_n\) is the push-forward of \(\mu_n\) under \(N\)
and each \(U_i\) has law \(\pi\),
\[
\widetilde\mu_n(F)
=
\mathbb E_{\mu_n}\left[\sum_{i=1}^n f(U_i)\right]
=
n\pi(f)
=
0.
\]

For \(1\le i<j\le n\), the Markov property, stationarity, and
\(Pf=\lambda f\) give
\begin{align*}
\mathbb E_{\mu_n}[f(U_i)f(U_j)]
=
\mathbb E_{\mu_n}
 \left[f(U_i)(P^{j-i}f)(U_i)\right]
=
\langle f,P^{j-i}f\rangle_\pi
=
\lambda^{j-i}.
\end{align*}
Moreover,
\[
\mathbb E_{\mu_n}[f(U_i)^2]=\pi(f^2)=1.
\]
Therefore
\begin{align}
\operatorname{Var}_{\widetilde\mu_n}(F)
&=
\mathbb E_{\mu_n}
\left[
\left(\sum_{i=1}^n f(U_i)\right)^2
\right]
\notag\\
&=
n+2\sum_{r=1}^{n-1}(n-r)\lambda^r
\notag\\
&=
n\frac{1+\lambda}{1-\lambda}
-\frac{2\lambda(1-\lambda^n)}{(1-\lambda)^2}.
\label{eq:exact-upper-variance}
\end{align}

We next compute the Dirichlet energy. For every \(u\in\Omega^n\)
and \(z\in\Omega\) contributing to the endpoint representation,
\[
F\bigl(N(u)-e_{u_1}+e_z\bigr)-F(N(u))
=
f(z)-f(u_1).
\]
Hence, after including the zero terms,
\begin{equation}
\mathcal E_{\widetilde P_n}(F,F)
=
\frac12
\sum_{u\in\Omega^n}\mu_n(u)
\sum_{z\in\Omega}p(u_n,z)
\bigl(f(z)-f(u_1)\bigr)^2.
\label{eq:upper-energy-sum}
\end{equation}
In the sum in \eqref{eq:upper-energy-sum}, both \(u_1\) and the
appended state \(z\) have marginal distribution \(\pi\). Moreover,
\[
\sum_{u\in\Omega^n}\mu_n(u)
\sum_{z\in\Omega}p(u_n,z)f(u_1)f(z)
=
\langle f,P^n f\rangle_\pi
=
\lambda^n.
\]
Using \(\pi(f^2)=1\) in \eqref{eq:upper-energy-sum}, we obtain
\begin{equation}
\mathcal E_{\widetilde P_n}(F,F)
=
\frac12\bigl(1+1-2\lambda^n\bigr)
=
1-\lambda^n.
\label{eq:exact-upper-energy}
\end{equation}

Since \(\lambda\in(-1,1)\), the quantity in
\eqref{eq:exact-upper-energy} is positive. Thus \(F\) is nonconstant,
and in particular its variance in \eqref{eq:exact-upper-variance} is
positive. Applying the variational definition of the spectral gap to
\(F\), and substituting \eqref{eq:exact-upper-variance} and
\eqref{eq:exact-upper-energy}, proves
\eqref{eq:explicit-upper-bound}.
\end{proof}

Theorem~\ref{thm:explicit-upper-bound} is a finite-\(n\) estimate,
valid for every \(n\ge 2\). To make its dependence on the window
length more transparent, fix \(\lambda\in(-1,1)\) and let
\(n\to\infty\). Since \(\lambda^n\to 0\), the upper bound in
\eqref{eq:explicit-upper-bound} has the asymptotic form
\[
\operatorname{Gap}(\widetilde P_n)
\le
\left(
\frac{1-\lambda}{1+\lambda}+o(1)
\right)\frac{1}{n}.
\]
This gives the leading dependence of the explicit bound on the
window length. We next derive from the same bound a simpler estimate,
expressed in terms of \(\operatorname{Gap}(P)\), that holds for every
\(n\ge 2\).

\begin{corollary}[A simpler upper bound]
\label{cor:simplified-upper-bound}
Assume that \(\operatorname{Gap}(P)<2\). Then, for every \(n\ge 2\),
\begin{equation}
\operatorname{Gap}(\widetilde P_n)
\le
\begin{cases}
\displaystyle
\frac{\operatorname{Gap}(P)}{n},
&
\operatorname{Gap}(P)\le 1,
\\[3mm]
\displaystyle
\frac{\operatorname{Gap}(P)}
{2-\operatorname{Gap}(P)}
\frac{1}{n},
&
1<\operatorname{Gap}(P)<2.
\end{cases}
\label{eq:simplified-upper-bound}
\end{equation}
\end{corollary}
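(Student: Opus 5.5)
This corollary comes from Theorem~\ref{thm:explicit-upper-bound} by choosing the eigenvalue \(\lambda:=1-\operatorname{Gap}(P)\). Here \(\operatorname{Gap}(P)=1-\lambda_2(P)\) is the right spectral gap, so \(\lambda=\lambda_2(P)\) is an eigenvalue of \(P\). Irreducibility gives \(\lambda_2(P)<1\), and the hypothesis \(\operatorname{Gap}(P)<2\) gives \(\lambda>-1\), so Theorem~\ref{thm:explicit-upper-bound} applies with \(\lambda\in(-1,1)\). Write \(g:=\operatorname{Gap}(P)=1-\lambda\). It then suffices to bound the right-hand side of \eqref{eq:explicit-upper-bound} by \(g/n\) when \(\lambda\ge0\), and by \(\frac{g}{2-g}\frac1n=\frac{1-\lambda}{1+\lambda}\frac1n\) when \(\lambda<0\).

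Since the denominator of \eqref{eq:explicit-upper-bound} is the positive variance \eqref{eq:exact-upper-variance}, I will compare \(n(1-\lambda^n)\) with a multiple of
\[
V_n:=n+2\sum_{r=1}^{n-1}(n-r)\lambda^r=n\frac{1+\lambda}{1-\lambda}-\frac{2\lambda(1-\lambda^n)}{(1-\lambda)^2}.
\]

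\emph{Case \(\lambda\ge0\) (\(g\le1\)).} The claim is \(n(1-\lambda^n)\le(1-\lambda)V_n\). A direct computation gives
\[
(1-\lambda)V_n=n(1+\lambda)-\frac{2\lambda(1-\lambda^n)}{1-\lambda}.
\]
So the inequality is equivalent to
\[
n(\lambda+\lambda^n)\ge \frac{2\lambda(1-\lambda^n)}{1-\lambda}=2\lambda\sum_{k=0}^{n-1}\lambda^k,
\]
which in turn is equivalent to \(\sum_{k=0}^{n-1}\bigl(\lambda+\lambda^n-2\lambda^{k+1}\bigr)\ge0\). Pairing the term \(k\) with \(k'=n-1-k\) reduces each pair to
\[
2\lambda+2\lambda^n-2\lambda^{k+1}-2\lambda^{n-k}=2\lambda(1-\lambda^k)(1-\lambda^{n-1-k})\ge0,
\]
which holds for \(\lambda\in[0,1)\). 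This gives the bound \((1-\lambda)/n=g/n\).

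\emph{Case \(\lambda<0\) (\(1<g<2\)).} The claim is \(n(1-\lambda^n)(1-\lambda)\le(1+\lambda)V_n\). Multiplying out, this becomes
\[
n\bigl[(1+\lambda)^2-(1-\lambda)^2(1-\lambda^n)\bigr]\ge\frac{2\lambda(1+\lambda)(1-\lambda^n)}{1-\lambda}.
\]
I will handle it directly. Put \(s=-\lambda\in(0,1)\) and split according to the parity of \(n\), since \(\lambda^n=\pm s^n\). The cleanest route may be to prove an equivalent pair-sum identity, as in the first case, with signs now alternating. A simpler sufficient route is available: for \(\lambda<0\) we have \(-2\lambda(1-\lambda^n)/(1-\lambda)^2\ge0\), so \(V_n\ge n\frac{1+\lambda}{1-\lambda}\). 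It would then suffice to have \(1-\lambda^n\le1\), but this fails for odd \(n\), where \(1-\lambda^n=1+s^n\). For odd \(n\) the missing term must instead absorb \(s^n\), namely one needs
\[
\frac{2s(1+s^n)}{(1+s)^2}\ge n s^n\,\frac{1-s}{1+s},
\]
that is, \(2s(1+s^n)\ge n s^n(1-s^2)\). This holds because \(ns^{n-1}(1-s)\le\sum_{k=0}^{n-1}s^k(1-s)=1-s^n\le1\), which gives \(n s^n(1-s^2)\le s(1+s)\le 2s(1+s^n)\) when \(s^n\ge\ldots\). This last inequality needs checking: \(s(1+s)\le2s+2s^{n+1}\) holds since \(s\le1\). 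This bookkeeping is the main obstacle. Even \(n\) is immediate, because there \(1-\lambda^n\le1\).
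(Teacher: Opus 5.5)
Your proposal is correct and follows essentially the paper's own route. You apply Theorem~\ref{thm:explicit-upper-bound} with $\lambda=\lambda_2(P)$ and handle $\lambda\ge 0$ by a symmetric pairing of powers; your factorization $2\lambda(1-\lambda^k)(1-\lambda^{n-1-k})$ is an equivalent repackaging of the paper's $(n-2r)(\lambda_2^r-\lambda_2^{n-r})$ pairing. For $\lambda<0$ you split by parity and use $ns^{n-1}(1-s)\le 1-s^n$ in the odd case, exactly as the paper does. The dangling qualifier ``when $s^n\ge\ldots$'' is unnecessary, because $s(1+s)\le 2s\le 2s(1+s^n)$ holds unconditionally for $s\in(0,1)$, so your odd case is already complete.
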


\begin{proof}
Set
\[
\lambda_2:=\lambda_2(P)=1-\operatorname{Gap}(P).
\]
The assumption \(\operatorname{Gap}(P)<2\) gives
\(\lambda_2\in(-1,1)\), so Theorem~\ref{thm:explicit-upper-bound}
applies with \(\lambda=\lambda_2\).

Suppose first that \(0\le \lambda_2<1\). Since
\[
1-\lambda_2^n
=
(1-\lambda_2)\sum_{r=0}^{n-1}\lambda_2^r,
\]
Theorem~\ref{thm:explicit-upper-bound} and
\eqref{eq:exact-upper-variance} show that it suffices to prove
\begin{equation}
n+2\sum_{r=1}^{n-1}(n-r)\lambda_2^r
\ge
n\sum_{r=0}^{n-1}\lambda_2^r.
\label{eq:nonnegative-eigenvalue-denominator-bound}
\end{equation}
To verify this inequality, pair the terms with indices \(r\) and
\(n-r\). We obtain
\begin{align*}
n+2\sum_{r=1}^{n-1}(n-r)\lambda_2^r
-n\sum_{r=0}^{n-1}\lambda_2^r
&=\sum_{r=1}^{n-1}(n-2r)\lambda_2^r
\\
&
=
\sum_{r=1}^{\lfloor (n-1)/2\rfloor}
(n-2r)
\bigl(\lambda_2^r-\lambda_2^{n-r}\bigr)
\ge 0.
\end{align*}
Indeed, for every \(1\le r<n/2\), we have
\(n-2r>0\) and
\(\lambda_2^r\ge\lambda_2^{n-r}\).
This proves \eqref{eq:nonnegative-eigenvalue-denominator-bound}.
Consequently,
\[
\operatorname{Gap}(\widetilde P_n)
\le
\frac{1-\lambda_2}{n}
=
\frac{\operatorname{Gap}(P)}{n}.
\]

Now suppose that \(-1<\lambda_2<0\). By
Theorem~\ref{thm:explicit-upper-bound} and
\eqref{eq:exact-upper-variance}, it suffices to prove
\[
n\frac{1+\lambda_2}{1-\lambda_2}
-\frac{2\lambda_2(1-\lambda_2^n)}{(1-\lambda_2)^2}
\ge
n\frac{1+\lambda_2}{1-\lambda_2}
(1-\lambda_2^n).
\]
The difference between the left- and right-hand sides is
\[
\frac{-\lambda_2}{(1-\lambda_2)^2}
\left[
2(1-\lambda_2^n)
-
n(1-\lambda_2^2)\lambda_2^{n-1}
\right].
\]
If \(n\) is even, then \(\lambda_2^{n-1}<0\), so the expression
in brackets is positive.

If \(n\) is odd, set \(a:=-\lambda_2\in(0,1)\). Since
\[
n a^{n-1}
\le
\sum_{r=0}^{n-1}a^r,
\]
we have
\[
n(1-a^2)a^{n-1}
\le
(1+a)(1-a^n)
\le
2(1+a^n).
\]
Equivalently,
\[
n(1-\lambda_2^2)\lambda_2^{n-1}
\le
2(1-\lambda_2^n).
\]
Thus the required denominator bound holds for both even and odd
\(n\). Consequently,
\begin{align*}
\operatorname{Gap}(\widetilde P_n)
&\le
\frac{1-\lambda_2^n}
{n\frac{1+\lambda_2}{1-\lambda_2}
 (1-\lambda_2^n)}
\\
&=
\frac{1-\lambda_2}{1+\lambda_2}\frac{1}{n}
\\
&=
\frac{\operatorname{Gap}(P)}
{2-\operatorname{Gap}(P)}
\frac{1}{n}.
\end{align*}

\end{proof}

\begin{remark}[A uniform form when \(m\ge 3\)]
The preceding corollary can be stated without cases when \(m\ge 3\),
at the cost of a universal constant. Since \(P\) is reversible, its
eigenvalues are real, and
\[
0
\le
\operatorname{tr}(P)
=
1+\sum_{j=2}^m\lambda_j(P)
\le
1+(m-1)\lambda_2(P).
\]
It follows that
\[
\lambda_2(P)\ge -\frac{1}{m-1},
\qquad
\operatorname{Gap}(P)
\le
\frac{m}{m-1}
\le
\frac32.
\]
In particular,
\[
2-\operatorname{Gap}(P)\ge\frac12.
\]
Combining the two cases of
Corollary~\ref{cor:simplified-upper-bound}, we obtain
\[
\operatorname{Gap}(\widetilde P_n)
\le
2\,\frac{\operatorname{Gap}(P)}{n},
\qquad n\ge 2.
\]
\end{remark}

\section{The i.i.d. case: a multivariate Ehrenfest chain}
\label{sec:iid-ehrenfest}

We now consider an exactly solvable case in which the complication
caused by the hidden path ordering disappears. In particular, suppose that
\begin{equation}
    p(x,y)=\pi(y),
    \qquad x,y\in\Omega.
    \label{eq:iid-row-constant-kernel}
\end{equation}
Then the stationary chain \(X_1,X_2,\ldots\) is an i.i.d. sequence
with common law \(\pi\), and
\[
    \mu_n=\pi^{\otimes n}.
\]

For \(u=(u_1,\ldots,u_n)\) with \(N(u)=\eta\), we have
\[
    \mu_n(u)
    =
    \prod_{i=1}^n\pi(u_i)
    =
    \prod_{x\in\Omega}\pi(x)^{\eta_x}.
\]
In contrast to the general Markov case, the probability of an ordered
path now depends only on its count vector. Indeed, whenever
\(N(u)=N(v)\), we have \(\mu_n(u)=\mu_n(v)\). Consequently,
conditional on \(N(U)=\eta\), all orderings compatible with \(\eta\)
are equally likely. By symmetry,
\[
    \mathbb P(U_1=x\mid N(U)=\eta)=\frac{\eta_x}{n}.
\]
The stationary count distribution is the multinomial law
\[
    \widetilde\mu_n(\eta)
    =
    \frac{n!}{\prod_{x\in\Omega}\eta_x!}
    \prod_{x\in\Omega}\pi(x)^{\eta_x}.
\]

The count dynamics now has a simple urn interpretation. Identify each
state \(x\in\Omega\) with an urn, so that \(\eta_x\) is the number of
balls in urn \(x\). At each step, choose one of the \(n\) balls
uniformly and redistribute it independently according to \(\pi\).
Equivalently,
\begin{equation}
    \widetilde P_n(\eta,\xi)
    =
    \sum_{x,z\in\Omega}
    \frac{\eta_x}{n}\pi(z)
    \mathbf 1_{\{\xi=\eta-e_x+e_z\}}.
    \label{eq:iid-count-kernel}
\end{equation}
When \(\pi\) is uniform, the selected ball is redistributed uniformly
among the urns. The kernel in \eqref{eq:iid-count-kernel} is precisely the \(s=1\) case of the generalized
Ehrenfest urn model studied by Khare and Zhou
\cite[Section~4.3]{Khare_2009}. They determine its full spectrum in
\cite[Theorem~4.24]{Khare_2009}; with their population-size
parameter \(N\) equal to our \(n\) and \(s=1\), the eigenvalues are
\(1-k/n\), \(0\le k\le n\). Their result therefore already implies the
proposition below. We include a short self-contained proof adapted to
the Dirichlet-form framework of this paper.

\begin{proposition}[Exact spectral gap in the i.i.d. case]
\label{prop:iid-exact-gap}
Assume \eqref{eq:iid-row-constant-kernel}, with
\(\pi(x)>0\) for every \(x\in\Omega\) and \(|\Omega|\ge 2\). Then,
for every \(n\ge 2\), we have
\[
    \operatorname{Var}_{\widetilde\mu_n}(F)
    \le
    n\,\mathcal E_{\widetilde P_n}(F,F)
    \]
    and
\begin{equation*}
    \operatorname{Gap}(\widetilde P_n)=\frac1n.
\end{equation*}
\end{proposition}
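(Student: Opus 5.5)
The plan is to lift to path space, where $\mu_n=\pi^{\otimes n}$ is a product measure, and use the Efron--Stein inequality. Set $H:=F\circ N$, so that $\Var_{\tmun}(F)=\Var_{\mu_n}(H)$. For a product measure, Efron--Stein gives
\[
\Var_{\mu_n}(H)\le\sum_{i=1}^n \E_{\mu_n}\bigl[\Var_i(H)\bigr],
\]
where $\Var_i$ denotes the variance in the $i$-th coordinate alone, with the others frozen. Write the resampled coordinate as $z\sim\pi$. Then
\[
\Var_i(H)(u)=\frac12\sum_{a,z}\pi(a)\pi(z)\bigl[H(u^{i\to a})-H(u^{i\to z})\bigr]^2 .
\]
Taking the outer expectation over $u_i\sim\pi$, this becomes
\[
\E_{\mu_n}[\Var_i H]=\frac12\,\E_{\mu_n}\sum_z\pi(z)\bigl[F(N(u)-e_{u_i}+e_z)-F(N(u))\bigr]^2 .
\]
The product structure lets us use the actual coordinate $u_i$ as the first independent copy.

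The next step is to compare each summand with the Dirichlet form. By Proposition~\ref{prop:endpoint-dirichlet-form} with $p(u_n,z)=\pi(z)$, we have
\[
\cE_{\tPn}(F,F)=\frac12\E_{\mu_n}\sum_z\pi(z)\bigl[F(N(u)-e_{u_1}+e_z)-F(N(u))\bigr]^2 .
\]
Since $\mu_n$ is exchangeable, the $i$-th term has the same value for every $i$. Summing over the $n$ coordinates therefore gives $\Var_{\tmun}(F)\le n\,\cE_{\tPn}(F,F)$, that is, $\Gap(\tPn)\ge 1/n$. The only delicate point is the identity expressing Efron--Stein's local variance in the "replace by an independent copy" form. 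Here one must use $\E[\Var_i H]=\frac12\E[(H(U)-H(U^{(i)}))^2]$, with $U^{(i)}$ obtained by resampling coordinate $i$. Matching this with the endpoint form requires no density comparison, unlike Section~\ref{subsec:coordinate-oscillations-to-dirichlet}, because the i.i.d.\ structure makes the endpoint update exactly a one-coordinate resampling.

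For the matching upper bound, I would invoke Theorem~\ref{thm:explicit-upper-bound} with $\lambda=0$. Since $p(x,y)=\pi(y)$, $P$ is a rank-one projection, and any $f$ with $\pi(f)=0$ satisfies $Pf=0$. Such a nonzero $f$ exists because $|\Om|\ge2$. The bound then reads $\Gap(\tPn)\le 1/n$. Explicitly, the linear statistic $F(\eta)=\sum_x\eta_xf(x)$ has variance $n$ and energy $1$. Combining the two bounds gives $\Gap(\tPn)=1/n$.

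I do not expect any real obstacle. The step most in need of care is correctly stating and applying the tensorized (Efron--Stein) variance bound in the resampling form, with constant exactly $1$. This is what produces the sharp constant $n$ rather than one degraded by the factors $p_*$ of the general argument.
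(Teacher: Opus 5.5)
Your proposal is correct and is essentially the paper's proof: Efron--Stein in the resampling form $\Var_{\mu_n}(H)\le\frac12\sum_i\E[(H(U^{(i)})-H(U))^2]$, identification of each resampling term with $\cE_{\tPn}(F,F)$, and the $\lambda=0$ linear statistic for the matching upper bound. The only differences are cosmetic. You identify the Dirichlet form through the endpoint representation plus exchangeability, whereas the paper uses the explicit urn kernel and $\sum_i\1_{\{u_i=x\}}=N_x(u)$. You also apply Theorem~\ref{thm:explicit-upper-bound} with $\lambda=0$ directly, rather than through the first case of Corollary~\ref{cor:simplified-upper-bound}.
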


\begin{proof}
We first prove the lower bound. Let
\[
    U=(U_1,\ldots,U_n)\sim\pi^{\otimes n},
\]
and let \(U'_1,\ldots,U'_n\) be independent random variables with law
\(\pi\), independent of \(U\). For \(1\le i\le n\), define
\[
    U^{(i)}
    :=
    (U_1,\ldots,U_{i-1},U'_i,U_{i+1},\ldots,U_n).
\]
Fix \(F:\Omega_n^\#\to\mathbb R\), set
\[
    H:=F\circ N,
\]
and write \(U_{-i}:=(U_j)_{j\ne i}\).

The Efron--Stein inequality in its independent-coordinate
resampling form
\cite[Theorem~3.1, p.~54]{BoucheronLugosiMassart2013}
gives
\begin{equation}
    \operatorname{Var}_{\pi^{\otimes n}}(H)
    \le
    \frac12\sum_{i=1}^n
    \mathbb E\left[
        \bigl(H(U^{(i)})-H(U)\bigr)^2
    \right].
    \label{eq:iid-efron-stein}
\end{equation}

We next identify the expression on the right-hand side of
\eqref{eq:iid-efron-stein} with the Dirichlet form of the count chain.
Using \eqref{eq:iid-count-kernel}, we obtain
\begin{align}
    \mathcal E_{\widetilde P_n}(F,F)
    &=
    \frac12
    \sum_{\eta\in\Omega_n^\#}
    \widetilde\mu_n(\eta)
    \sum_{x,z\in\Omega}
    \frac{\eta_x}{n}\pi(z)
    \bigl(
        F(\eta-e_x+e_z)-F(\eta)
    \bigr)^2
    \notag\\
    &=
    \frac{1}{2n}
    \sum_{u\in\Omega^n}
    \pi^{\otimes n}(u)
    \sum_{i=1}^n
    \sum_{z\in\Omega}\pi(z)
    \bigl(
        F(N(u)-e_{u_i}+e_z)-F(N(u))
    \bigr)^2
    \notag\\
    &=
    \frac{1}{2n}
    \sum_{i=1}^n
    \mathbb E\left[
        \bigl(H(U^{(i)})-H(U)\bigr)^2
    \right].
    \label{eq:iid-dirichlet-resampling}
\end{align}
For the second equality, we used
\[
    \sum_{i=1}^n\mathbf 1_{\{u_i=x\}}
    =
    N_x(u).
\]

Since \(\widetilde\mu_n\) is the push-forward of
\(\pi^{\otimes n}\) under \(N\), equations
\eqref{eq:iid-efron-stein} and
\eqref{eq:iid-dirichlet-resampling} imply
\[
    \operatorname{Var}_{\widetilde\mu_n}(F)
    =
    \operatorname{Var}_{\pi^{\otimes n}}(H)
    \le
    n\,\mathcal E_{\widetilde P_n}(F,F).
\]
Consequently,
\[
    \operatorname{Gap}(\widetilde P_n)\ge\frac1n.
\]

For the matching upper bound, since all rows of \(P\) are equal to
\(\pi\), we have
\[
    (Pf)(x)
    =
    \sum_{y\in\Omega}\pi(y)f(y)
    =
    \pi(f),
    \qquad x\in\Omega,
\]
for every \(f:\Omega\to\mathbb R\). Hence the eigenvalues of \(P\)
are
\[
    1,0,\ldots,0,
\]
and therefore
\[
    \operatorname{Gap}(P)=1.
\]
The first case of Corollary~\ref{cor:simplified-upper-bound} now gives
\[
    \operatorname{Gap}(\widetilde P_n)
    \le
    \frac{\operatorname{Gap}(P)}{n}
    =
    \frac1n.
\]
Combining the two bounds proves the result.
\end{proof}

\begin{remark}[An explicit eigenfunction]
For any $a\in\Omega$, define
\[
    F_a(\eta):=\eta_a-n\pi(a).
\]
By \eqref{eq:iid-count-kernel},
\[
    (\widetilde P_n F_a)(\eta)
    =
    \eta_a-\frac{\eta_a}{n}+\pi(a)-n\pi(a)
    =
    \left(1-\frac1n\right)F_a(\eta).
\]
Thus $F_a$ is a nonconstant eigenfunction of $\widetilde P_n$ with
eigenvalue $1-1/n$.  This gives a direct alternative proof of the upper
bound $\operatorname{Gap}(\widetilde P_n)\leq 1/n$.
\end{remark}

Finally, let
\[
    \pi_*:=\min_{x\in\Omega}\pi(x).
\]
In the row-constant case,
\[
    p_*=\pi_*
    \qquad\text{and}\qquad
    \delta(P)=0.
\]
Hence Theorem~\ref{thm:main-gap-lower-bound} gives
\[
    \operatorname{Gap}(\widetilde P_n)
    \ge
    \frac{\pi_*^4}{4n},
\]
whereas Proposition~\ref{prop:iid-exact-gap} gives the exact value \(1/n\).
Thus, the general lower bound has the correct \(n^{-1}\) scaling, but
its dependence on \(\pi_*\) is not sharp: although the displayed bound
deteriorates as \(\pi_*\downarrow0\), the exact spectral gap remains
\(1/n\) for every strictly positive \(\pi\).

\section{The two-state case}\label{sec:two-state}

We now consider another special setting. That is,
we  specialize in
\[
\Omega=\{0,1\}
\]
and assume throughout this section that
\[
p(x,y)>0,\qquad x,y\in\Omega.
\]
For a two-state kernel,
\begin{equation}\label{eq:two-state-base-gap}
\operatorname{Gap}(P)=p(0,1)+p(1,0).
\end{equation}
The stationary distribution is
\begin{equation}\label{eq:two-state-pi}
\pi(0)=\frac{p(1,0)}{\operatorname{Gap}(P)},
\qquad
\pi(1)=\frac{p(0,1)}{\operatorname{Gap}(P)}.
\end{equation}

A count vector \(\eta\in\Omega_n^{\#}\) is determined by the number of ones it contains. We therefore identify
\[
\eta=(n-k,k)
\quad\text{with}\quad
k\in\{0,1,\ldots,n\},
\]
and use the abbreviations
\[
\widetilde \mu_n(k):=\widetilde \mu_n(n-k,k),
\qquad
\widetilde P_n(k,\ell):=\widetilde P_n\bigl((n-k,k),(n-\ell,\ell)\bigr).
\]
Under this identification, \(\widetilde P_n\) is a reversible birth--death kernel: one transition changes the number of ones by at most one.

Define
\begin{equation}\label{eq:two-state-alpha}
\alpha_P
:=
\min_{x,x',y\in\Omega}\frac{p(x,y)}{p(x',y)}
\in(0,1].
\end{equation}
The two-state geometry yields a comparison that retains the spectral gap of the underlying chain.

\begin{theorem}[Two-state spectral-gap comparison]\label{thm:two-state-gap}
Let \(\Omega=\{0,1\}\), and assume that \(p(x,y)>0\) for all \(x,y\in\Omega\). Then, for every \(n\ge2\),
\begin{equation}\label{eq:two-state-gap}
\operatorname{Gap}(\widetilde P_n)
\ge
\alpha_P^4\,\frac{\operatorname{Gap}(P)}{n}.
\end{equation}
\end{theorem}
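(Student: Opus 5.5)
Our plan is to compare $\widetilde P_n$ with the i.i.d.\ Ehrenfest chain of Section~\ref{sec:iid-ehrenfest}, but we do not compare the two count chains directly. Instead we work on path space, replacing the i.i.d.\ Efron--Stein inequality by a spectral-gap bound for the stationary two-state path law $\mu_n$.

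\textbf{Step 1 (path-space Poincar\'e inequality with $\operatorname{Gap}(P)$).} The stationary path measure $\mu_n$ of a reversible chain is a nearest-neighbour Gibbs measure on $\{0,1\}^n$. For a single-site heat-bath (Glauber) dynamics, coordinate $i$ is resampled from $\mu_n(u_i\in\cdot\mid u_{-i})\propto p(u_{i-1},u_i)p(u_i,u_{i+1})$. We would establish
\[
\operatorname{Var}_{\mu_n}(H)\le \frac{C}{\operatorname{Gap}(P)}\sum_{i=1}^n \E_{\mu_n}\bigl[\operatorname{Var}_{\mu_n}(H\mid U_{-i})\bigr],
\]
with $C$ a power of $\alpha_P^{-1}$. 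For two states this is tractable by a martingale/coupling argument as in Proposition~\ref{prop:path-variance}. The key change is to replace $\delta(P)$ by $|\lambda_2(P)|=|1-\operatorname{Gap}(P)|$; for a two-state chain, $\delta(P)=|1-p(0,1)-p(1,0)|$ exactly. The maximal coupling therefore disagrees with probability $|\lambda_2|^{j-i}$. To avoid the square root and the factor $(1-\sqrt\delta)^{-2}$, we would apply Cauchy--Schwarz with weights $|\lambda_2|^{j-i}$ rather than their square roots, obtaining a factor $\sum_j|\lambda_2|^{j-i}\le 1/\operatorname{Gap}(P)$ once. Making this precise when $\operatorname{Gap}(P)>1$ needs care.

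\textbf{Step 2 (local variance to endpoint Dirichlet form).} For $H=F\circ N$, the conditional variance at site $i$ is $q_i(1-q_i)\,(F(N(u)-e_{u_i}+e_{1-u_i})-F(N(u)))^2$. Here $q_i$ is the heat-bath probability, which is a ratio of products of transition probabilities. This quantity is dominated by $D_i(u)=\sum_z p(u_{i-1},z)[\cdots]^2$, since the one nonzero term has weight $p(u_{i-1},1-u_i)$. The ratio of the two weights is controlled by ratios $p(x,y)/p(x',y)$, giving a factor $\alpha_P^{-2}$. The cyclic-rotation density comparison \eqref{eq:rotation-density-ratio} then gives $\E D_i\le c\,\E D_1$. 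In the two-state case we would bound $\pi(u_i)/\pi(u_1)$ times the transition ratio using reversibility, $\pi(x)p(x,y)=\pi(y)p(y,x)$, rather than by $p_*$. This should again yield a factor of order $\alpha_P^{-2}$. Summing over $i$ and using $\E D_1=2\mathcal E_{\widetilde P_n}(F,F)$ gives the factor $n$.

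\textbf{Main obstacle.} Step 1 is the hard part: obtaining the constant $\operatorname{Gap}(P)^{-1}$ rather than $(1-\sqrt{\delta})^{-2}$, with only $\alpha_P$-dependent losses. If the coupling route loses too much, the first fallback uses the birth--death structure. The chain $\widetilde P_n$ on $\{0,\dots,n\}$ is birth--death, so we can apply the Hardy/Muckenhoupt-type criterion (or Chen's variational formula) for birth--death Poincar\'e constants. This reduces to estimating the explicit rates $\widetilde P_n(k,k\pm1)$ against the binomial-like weights $\widetilde\mu_n(k)$. We would compare these rates, up to factors $\alpha_P^{\pm 2}$, with those of the Ehrenfest chain with parameter $\pi$, whose gap is $1/n$ by Proposition~\ref{prop:iid-exact-gap}.

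The Ehrenfest comparison alone does not capture the factor $\operatorname{Gap}(P)$. We expect it to enter through the variance scale of $\widetilde\mu_n$, which is of order $n(1+\lambda_2)/(1-\lambda_2)$ by \eqref{eq:exact-upper-variance}. The corresponding tail ratios in the Muckenhoupt constant would then need to be tracked.
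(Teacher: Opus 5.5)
Your Step~2 is essentially sound: for any probability vector $(p_0,p_1)$ one has $q p_0+(1-q)p_1\ge\min\{q,1-q\}\ge q(1-q)$, so the heat-bath variance at site $i$ is bounded by $\E[D_i\mid U_{-i}]$ with no loss, and the rotation costs $\alpha_P^{-2}$. The gap is Step~1, which carries all the $\operatorname{Gap}(P)$-dependence. It is not proved, and the mechanism you sketch for it does not work.

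In the coupling proof of Proposition~\ref{prop:path-variance}, the geometric decay of $\Pp(U^a_j\ne U^b_j)$ is used twice. It is used once to bound $|G_w(a)-G_w(b)|$ by a sum over $j\ge i$, and again when the martingale increments are summed over $i\le j$. Cauchy--Schwarz with weights $\Pp(U^a_j\ne U^b_j)$ gives $|G_w(a)-G_w(b)|^2\le(1-|\lambda_2|)^{-1}\sum_{j\ge i}\E[\1_{\{U^a_j\ne U^b_j\}}O_j^2]$. If you drop the indicator, $\sum_i\sum_{j\ge i}$ produces an extra factor of order $n$. If you keep it in the natural way, you pay a second factor $(1-|\lambda_2|)^{-1}$.

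This loss is intrinsic. Take $p(0,1)=p(1,0)=\epsilon$, $H=\sum_iU_i$ and $n\gg1/\epsilon$. Then $\Var_{\mun}(H)\approx n/(4\epsilon)$, whereas $\sum_i\E[\Var(H\mid U_{-i})]\approx n\epsilon/2$. The reason is that the heat-bath variance is $1/4$ at domain walls ($U_{i-1}\ne U_{i+1}$, probability about $2\epsilon$) and $O(\epsilon^2)$ at interior sites with $U_{i-1}=U_{i+1}$. So the constant in Step~1 is at least about $2/\operatorname{Gap}(P)^2$.

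Your form $C/\operatorname{Gap}(P)$ survives only because $\alpha_P\le\operatorname{Gap}(P)$ for two-state kernels, which lets $C$ absorb a factor $\operatorname{Gap}(P)^{-1}$. Once that happens, nothing in the plan pins the final exponent of $\alpha_P$, or the numerical constant, to the claimed $\alpha_P^4$.

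Moreover, the coupling argument produces $\operatorname{osc}_jH$ rather than conditional variances, and it loses $p_*^{-1}$ at the splice point. The ingredients you actually cite are Proposition~\ref{prop:path-variance} with $\delta(P)=|1-\operatorname{Gap}(P)|$, the bound $(\operatorname{osc}_iH)^2\le D_i/p_*$, and the rotation cost $\alpha_P^{-2}$. Together they deliver only $\operatorname{Gap}(\tPn)\ge p_*^2\alpha_P^2(1-\sqrt{\delta(P)})^2/(2n)$. In the example above this is of order $\epsilon^6/n$, against the claimed $\alpha_P^4\operatorname{Gap}(P)/n\approx2\epsilon^5/n$. The Muckenhoupt fallback is only named, not carried out.

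The missing idea is to let $\operatorname{Gap}(P)$ enter through the drift of the birth--death chain rather than through a variance estimate. The paper uses a curvature criterion: if $K(k,k+1)-K(k,k-1)-K(k+1,k+2)+K(k+1,k)\ge\beta$ for all $k$, then $\operatorname{Gap}(K)\ge\beta$. This follows from a one-step coupling showing that the lazy chain $(I+K)/2$ contracts the Lipschitz seminorm by $1-\beta/2$, applied to an eigenfunction for $\lambda_2(K)$.

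The drift is then computed exactly. One has $\tPn(k,k+1)-\tPn(k,k-1)=\E[Z-U_1\mid N_1(U)=k]$, where $\E[Z\mid U]=p(U_n,1)=p(0,1)+(1-\operatorname{Gap}(P))U_n$. By reversibility, $U_n$ and $U_1$ have the same conditional law given the counts. Hence the drift is $\operatorname{Gap}(P)r_k-p(1,0)$ with $r_k=\Pp(U_1=0\mid N_1(U)=k)$, and the curvature is exactly $\operatorname{Gap}(P)(r_k-r_{k+1})$. So $\operatorname{Gap}(P)$ appears as the slope of $x\mapsto p(x,1)$, not through the variance of $\tmun$.

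The factor $1/n$ and the constant $\alpha_P^4$ come from the combinatorial bound $r_k-r_{k+1}\ge\alpha_P^4/n$. A weight-preserving reflection bijection identifies $(r_k-r_{k+1})\tmun(k)\tmun(k+1)$ with the weight of a set of ballot-type admissible pairs. The cycle lemma shows that exactly one of the $n$ simultaneous cyclic rotations of any pair is admissible. Your own rotation ratio, applied to both paths of a pair, then supplies the factor $\alpha_P^{\pm4}$.
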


The factor \(\alpha_P^4\) is not uniform over all positive two-state kernels; it might deteriorate as \(P\) approaches the boundary of  positive stochastic matrices. Thus, Theorem~\ref{thm:two-state-gap} does not settle for an absolute-constant comparison. On the other hand, if the two rows of \(P\) are identical, then \(\alpha_P=1\) and \(\operatorname{Gap}(P)=1\). The theorem then gives
\[
\operatorname{Gap}(\widetilde P_n)\ge\frac1n,
\]
and the upper bound proved in the preceding section gives equality. This is identical with the result of the preceding section in the i.i.d. case.

The proof of Theorem~\ref{thm:two-state-gap} has two ingredients. First, a positive discrete derivative of the drift gives a spectral-gap lower bound for a birth--death kernel. Second, a weighted cycle argument shows that the relevant drift derivative is of order \(1/n\).

\subsection{A drift-curvature criterion for birth--death kernels}

We begin with the one-dimensional spectral-gap estimate used below. For a birth--death kernel
\(K=(K(i,j))_{0\le i,j\le n}\), adopt the boundary convention
\[
K(0,-1)=K(n,n+1)=0.
\]

\begin{lemma}[Drift-curvature criterion]\label{lem:two-state-birth-death-curvature}
Let \(K\) be an irreducible reversible Markov kernel on \(\{0,1,\ldots,n\}\), and assume that
\[
K(i,j)=0
\qquad\text{whenever }|i-j|\ge2.
\]
Suppose that, for some \(\beta>0\),
\begin{equation}\label{eq:two-state-birth-death-curvature}
K(k,k+1)-K(k,k-1)
-K(k+1,k+2)+K(k+1,k)
\ge \beta
\end{equation}
for every \(0\le k<n\). Then
\[
\operatorname{Gap}(K)\ge\beta.
\]
\end{lemma}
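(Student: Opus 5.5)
We would prove this via a discrete Bakry--Émery / coupling argument for birth--death chains. The approach is to show that $K$ contracts the discrete gradient: for every $f$ on $\{0,\dots,n\}$, writing $\nabla f(k):=f(k+1)-f(k)$, we aim for a pointwise bound $|\nabla(Kf)(k)|\le \sum_j c_{k,j}|\nabla f(j)|$ whose weights satisfy $\sum_k c_{k,j}\le 1-\beta$, or better, a monotone coupling contracting the distance. The cleanest route uses path coupling in Wasserstein distance. Take two chains started at adjacent states $k$ and $k+1$. Hypothesis \eqref{eq:two-state-birth-death-curvature} says precisely that the expected displacement $\E_k[X_1-k]=K(k,k+1)-K(k,k-1)$ exceeds $\E_{k+1}[X_1-(k+1)]$ by at least $\beta$. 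Hence
\[
\E_{k+1}[X_1]-\E_k[X_1]\le 1-\beta .
\]

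To turn this into a contraction of $W_1$, we need a monotone coupling, so that $|X_1-X_1'|=X_1'-X_1$ in expectation. Such a coupling exists when the laws $K(k,\cdot)$ and $K(k+1,\cdot)$ are stochastically ordered. A general birth--death kernel need not be monotone, for instance when holding probabilities are small. We would first pass to the lazy kernel $(I+K)/2$, which halves $\beta$ and gaps; that loses a factor 2. Instead, I would argue directly on eigenfunctions. Let $f$ be an eigenfunction for $\lambda_2$. For birth--death chains it is classical that $f$ can be taken monotone, since the second eigenfunction of an irreducible reversible tridiagonal kernel has one sign change. Set $g:=\nabla f\ge 0$. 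Computing $\nabla(Kf)(k)$ via summation by parts gives
\[
\nabla(Kf)(k)=K(k+1,k+2)g(k+1)+\bigl(1-K(k,k+1)-K(k+1,k)\bigr)g(k)+K(k,k-1)g(k-1).
\]
Here the boundary terms vanish by convention, and the middle coefficient may be negative.

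With $\lambda_2 g(k)=\nabla(Kf)(k)$, pick $k^*$ maximizing $g$, which is positive since $f$ is nonconstant. Bounding $g(k^*\pm1)\le g(k^*)$ requires nonnegative coefficients, and those on $g(k^*\pm 1)$ are nonnegative. For the middle coefficient, even if it is negative we still get $\lambda_2 g(k^*)\le (\text{sum of the three coefficients})\,g(k^*)$ provided we use $g(k^*\pm1)\le g(k^*)$ only on the nonnegative terms. That yields
\[
\lambda_2\le 1-\bigl[K(k,k+1)-K(k,k-1)-K(k+1,k+2)+K(k+1,k)\bigr]\le 1-\beta .
\]
Thus $\Gap(K)=1-\lambda_2\ge\beta$.

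The main obstacle is justifying that the eigenvector for $\lambda_2$ (the second largest eigenvalue, relevant to the right gap) is monotone. I would prove this by the standard Sturm oscillation argument for Jacobi matrices, after symmetrizing $K$ by $\rho^{1/2}$. Alternatively, avoid it by a variational argument: restrict the Rayleigh quotient to monotone functions, using that replacing $f$ by a monotone rearrangement does not increase the Dirichlet form relative to the variance. I expect the Sturm route to be cleaner. If a negative middle coefficient causes trouble, a fallback is to apply the maximum argument to $|g|$ and again exploit that only the off-diagonal coefficients need signs.
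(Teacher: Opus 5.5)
Your argument is correct, but it follows a different route from the paper's.

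\textbf{The paper's route.} The paper passes to the lazy kernel $\overline K=(I+K)/2$. It writes down an explicit coupling of $\overline K(k,\cdot)$ and $\overline K(k+1,\cdot)$ with $\mathbb{E}|\overline X-\overline Y|\le 1-\beta/2$, and deduces $\operatorname{Lip}(\overline Kf)\le(1-\beta/2)\operatorname{Lip}(f)$ for every $f$. It then applies this to a $\lambda_2$-eigenfunction. Contrary to your worry, laziness costs nothing there. $\overline K$ has eigenvalue $(1+\lambda_2)/2$ and curvature $\beta/2$, and $(1+\lambda_2)/2\le 1-\beta/2$ is literally $\lambda_2\le 1-\beta$.

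\textbf{Your route.} You use the intertwining $\nabla(Kf)=\widehat K\,\nabla f$. Here $\widehat K$ is tridiagonal, with nonnegative off-diagonal entries $K(k+1,k+2)$ and $K(k,k-1)$, and a diagonal $1-K(k,k+1)-K(k+1,k)$ of either sign. You combine this with a one-sided maximum principle at a maximizer $k^*$ of $g=\nabla f$. Because the possibly negative diagonal coefficient multiplies $g(k^*)$ itself, you get $\lambda_2\le 1-\beta$ directly. This is the row-sum bound on the top eigenvalue of the Metzler matrix $\widehat K$, whose spectrum is that of $K$ with one copy of $1$ removed.

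\textbf{What each buys.} Your route is more elementary and one-sided, which is exactly what the right gap requires. The paper's coupling proves a stronger, two-sided statement: $\overline K$ contracts the Lipschitz seminorm of every function. That two-sidedness is why it needs the nonnegative lazy diagonal. It is also why your fallback of running the maximum argument on $|g|$ would break down when the middle coefficient is negative.

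\textbf{The monotonicity step is unnecessary.} The step you single out as the main obstacle is not needed. Your maximum-principle estimate uses only $g(k^*\pm1)\le g(k^*)$, which is the maximality of $k^*$, not monotonicity of $f$. All you need is $\max_k g(k)>0$. This holds after replacing $f$ by $-f$ if necessary, since $g\not\equiv 0$: an eigenfunction for $\lambda_2<1$ is orthogonal to constants, hence nonconstant.

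This is fortunate, because the justification you sketch is incomplete. A single sign change does not by itself make $f$ monotone. You would additionally need, for example, the flux identity $\rho(k)K(k,k+1)\nabla f(k)=(\lambda_2-1)\sum_{j\le k}\rho(j)f(j)$, with $\rho$ the reversible law of $K$, or Perron--Frobenius applied to $\widehat K$. With the sign normalization in place of monotonicity, your proof is complete.
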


\begin{proof}
Set
\[
\overline K:=\frac{I+K}{2}.
\]
Fix \(0\le k<n\). We couple one step from \(k\) and one step from \(k+1\) under \(\overline K\) by assigning the following masses:
\[
\begin{array}{c|c}
(\overline X,\overline Y) & \text{probability} \\ \hline
(k-1,k+1) & \frac12 K(k,k-1) \\
(k+1,k+1) & \frac12 K(k,k+1) \\
(k,k) & \frac12 K(k+1,k) \\
(k,k+2) & \frac12 K(k+1,k+2),
\end{array}
\]
and placing the remaining mass on \((k,k+1)\). At the two boundaries, the corresponding zero-mass outcomes in the table are omitted. The remaining mass is nonnegative because the total off-diagonal mass in each row of \(K\) is at most one. A direct check shows that
\[
\overline X\sim\overline K(k,\cdot),
\qquad
\overline Y\sim\overline K(k+1,\cdot).
\]
Moreover, by \eqref{eq:two-state-birth-death-curvature},
\begin{align*}
\mathbb E|\overline X-\overline Y|
&=1+\frac12 K(k,k-1)+\frac12 K(k+1,k+2)
   -\frac12 K(k,k+1)-\frac12 K(k+1,k)\\
&\le 1-\frac\beta2.
\end{align*}

For \(f:\{0,1,\ldots,n\}\to\mathbb R\), let
\[
\operatorname{Lip}(f):=
\max_{0\le k<n}|f(k+1)-f(k)|.
\]
The preceding coupling gives, for every \(0\le k<n\),
\[
|\overline Kf(k+1)-\overline Kf(k)|
\le
\operatorname{Lip}(f)\,\mathbb E|\overline X-\overline Y|
\le
\left(1-\frac\beta2\right)\operatorname{Lip}(f).
\]
Hence
\begin{equation}\label{eq:two-state-Lipschitz-contraction}
\operatorname{Lip}(\overline Kf)
\le
\left(1-\frac\beta2\right)\operatorname{Lip}(f).
\end{equation}

Let \(f\) be a nonconstant eigenfunction of \(K\) with eigenvalue \(\lambda_2(K)\). Then
\[
\overline Kf=\frac{1+\lambda_2(K)}2 f.
\]
Since \(\lambda_2(K)\ge-1\) and \(\operatorname{Lip}(f)>0\), \eqref{eq:two-state-Lipschitz-contraction} implies
\[
\frac{1+\lambda_2(K)}2
\le
1-\frac\beta2.
\]
Therefore \(1-\lambda_2(K)\ge\beta\), as claimed.
\end{proof}

\subsection{A weighted cycle estimate}

Under the identification of the count state \((n-k,k)\) with \(k\), we also write
\[
u\sim k
\qquad\text{when}\qquad
N_1(u)=k.
\]
For \(U\sim\mu_n\), define
\begin{equation}\label{eq:two-state-r-k}
r_k
:=
\mathbb P\bigl(U_1=0\mid N_1(U)=k\bigr).
\end{equation}
The next proposition is the quantitative combinatorial input in the proof of Theorem~\ref{thm:two-state-gap}.

\begin{proposition}[Endpoint monotonicity]\label{prop:two-state-endpoint-monotonicity}
For every \(0\le k<n\),
\begin{equation}\label{eq:two-state-endpoint-monotonicity}
\frac{\alpha_P^4}{n}
\le
r_k-r_{k+1}
\le
\frac{\alpha_P^{-4}}{n}.
\end{equation}
\end{proposition}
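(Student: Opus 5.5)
We plan to write $r_k=a_k/(a_k+b_k)$, where
\[
a_k:=\sum_{u\sim k,\;u_1=0}\mu_n(u),\qquad b_k:=\sum_{u\sim k,\;u_1=1}\mu_n(u).
\]
Then
\[
r_k-r_{k+1}=\frac{a_kb_{k+1}-a_{k+1}b_k}{(a_k+b_k)(a_{k+1}+b_{k+1})}.
\]
So \eqref{eq:two-state-endpoint-monotonicity} amounts to comparing the numerator, a signed sum over pairs $(u,v)$ with $u\sim k$ and $v\sim k+1$, with $\tfrac1n$ times the denominator, a sum over the same pairs. The key structural tool is the rotation structure of Step~2 in the proof of Proposition~\ref{prop:coordinate-oscillation-dirichlet-bound}. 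Write $c(u):=\prod_{r=1}^{n}p(u_r,u_{r+1})$ with $u_{n+1}:=u_1$. This cyclic weight is invariant under every rotation $\theta_i$, and
\[
\mu_n(u)=c(u)\,\frac{\pi(u_1)}{p(u_n,u_1)}.
\]
Using reversibility in the form $\pi(x)/p(y,x)=\pi(y)/p(x,y)$ together with $\alpha_P=\min p(x,y)/p(x',y)$, we will check that the boundary factor $g(u_n,u_1):=\pi(u_1)/p(u_n,u_1)$ takes values whose ratios lie in $[\alpha_P^{2},\alpha_P^{-2}]$ after normalization by a class-independent constant. Hence, within a rotation class, $\mu_n$ is comparable to uniform with a factor $\alpha_P^{\pm2}$.

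Next we decompose each sum over rotation classes. For a pair of classes $(\mathcal C,\mathcal D)$ with $\mathcal C\sim k$ and $\mathcal D\sim k+1$, the contribution to the numerator is
\[
c(\mathcal C)c(\mathcal D)\sum_{i,j}g_ig'_j\bigl[\1\{u^{(i)}_1=0,v^{(j)}_1=1\}-\1\{u^{(i)}_1=1,v^{(j)}_1=0\}\bigr].
\]
Here $g_i$ and $g'_j$ are the boundary factors of the rotations. With uniform weights, this bracket sum equals
\[
(n-k)(k+1)-k(n-k-1)=n,
\]
compared with $n^2$ for the denominator contribution. That is exactly the ratio $1/n$, which identifies the source of the $1/n$.

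The main obstacle is that, with nonuniform weights $g_i$, a multiplicative comparison of each $a_k,b_k$ to its uniform value only controls $r_k$ up to constants, not the \emph{difference} $r_k-r_{k+1}$. To handle this, we will use that the boundary factor depends only on the pair $(u_{i-1},u_i)$ at the cut. Within a class, the sums $\sum_{i:u_i=0}g_i$ and $\sum_{i:u_i=1}g_i$ are therefore determined by the counts of the four cyclic transition types $00,01,10,11$. In a binary cycle, the numbers of $01$ and $10$ transitions agree, and $n_{00},n_{11}$ are determined by $k$ and the number $s$ of runs.

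Writing these weighted sums explicitly in terms of $(n,k,s)$ should turn the numerator contribution into an expression that is a positive combination bounded below by $\alpha_P^{4}$ times a quantity of order $n$. We would pair the class $\mathcal D$ with classes obtained from $\mathcal C$ by flipping one $0$ to a $1$ if direct algebra fails. The denominator is bounded above correspondingly by $\alpha_P^{-4}$ times order $n^2$, and symmetric estimates give the upper bound $\alpha_P^{-4}/n$.

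If this explicit run-count computation becomes unwieldy, we would fall back on a flip-coupling. We would map each $u\sim k$ with $u_1=0$ to the $n-k$ paths obtained by flipping a $0$ to a $1$, and compare with the reverse map. The resulting single changed transition factors cost at most $\alpha_P^{\pm2}$ each, which accounts for the fourth power.
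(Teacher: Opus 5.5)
\textbf{There is a genuine gap: the proposal does not establish the sign or the size of $r_k-r_{k+1}$.}

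Your setup is sound. $r_k-r_{k+1}$ is the cross-difference $a_kb_{k+1}-a_{k+1}b_k$ divided by $\widetilde\mu_n(k)\widetilde\mu_n(k+1)$. Your uniform-weight count ($n$ against $n^2$) correctly locates the $1/n$. You also rightly note that the $\alpha_P^{\pm2}$ rotation comparison cannot control an $O(1/n)$ difference between quantities of order one.

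Nothing in the proposal overcomes this obstacle. The run-count route cannot work class pair by class pair, because a single class pair's contribution is genuinely signed. Write $g(x,y)=\pi(y)/p(x,y)$, so that $g(0,1)=g(1,0)$ by reversibility. Take $\mathcal C=[1010\cdots0]$ with $k=2$ and $\mathcal D=[1110\cdots0]$. Your bracket sum is then
\[
(n-4)\,g(0,0)\bigl(2g(1,1)-g(0,1)\bigr)+4\,g(1,1)\,g(0,1).
\]
This is negative for large $n$ as soon as $p(1,1)>2p(0,1)$. So positivity of the numerator appears only after aggregating over all classes, and you give no identity that performs that aggregation.

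The flip-coupling fallback has the same defect you already identified. Each flip changes path weights by factors in $[\alpha_P^{2},\alpha_P^{-2}]$. That perturbs $a_k$ and $b_k$ multiplicatively and swamps an $O(1/n)$ difference. As written, the argument does not even show $r_k>r_{k+1}$.

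\textbf{The missing idea} is an exact identity that turns the cross-difference into a nonnegative sum before any inequality is used. The paper proves
\[
(r_k-r_{k+1})\,\widetilde\mu_n(k)\widetilde\mu_n(k+1)=\sum_{(w,v)\in\mathcal A_k}\mu_n(w)\mu_n(v).
\]
Here $\mathcal A_k$ consists of pairs with $w\sim k$, $v\sim k+1$, $w_1=0$, $v_1=1$, and $\sum_{i=2}^t w_i\ge\sum_{i=2}^t v_i$ for all $t$.

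This identity rests on a bijection. Let $j$ be the largest index with $\sum_{i=j+1}^n w_i>\sum_{i=j+1}^n v_i$. Reverse the first $j+1$ coordinates of both paths and exchange them. This map matches the non-admissible pairs in $a_kb_{k+1}$ bijectively with all pairs in $a_{k+1}b_k$. Reversibility makes it exactly weight-preserving, so those terms cancel.

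Next, the paper rotates the \emph{pair} $(w,v)$ simultaneously, not the two paths independently as in your class-pair decomposition. The cycle lemma for the walk $S_t=\sum_{i\le t}(w_i-v_i)$, with $S_n=-1$, then shows that exactly one of the $n$ joint rotations lies in $\mathcal A_k$.

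Only at this point are multiplicative comparisons harmless. The bound $\alpha_P^{\pm2}$ per path gives $\alpha_P^{\pm4}$ per pair. Combined with the exact double counting, this yields both inequalities in \eqref{eq:two-state-endpoint-monotonicity}.
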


\begin{proof}
Fix \(0\le k<n\). Define the set of admissible pairs
\begin{align}\label{eq:two-state-admissible-pairs}
\mathcal A_k:=\biggl\{(w,v)\in\Omega^n\times\Omega^n:\;&
 w\sim k,\ v\sim k+1,\ w_1=0,\ v_1=1,\\[-2mm]
&\sum_{i=2}^t w_i\ge\sum_{i=2}^t v_i
\quad\text{for every }2\le t\le n
\biggr\}.\nonumber
\end{align}
We first show that
\begin{equation}\label{eq:two-state-reflection-identity}
(r_k-r_{k+1})\widetilde\mu_n(k)\widetilde\mu_n(k+1)
=
\sum_{(w,v)\in\mathcal A_k}\mu_n(w)\mu_n(v).
\end{equation}

By the definition of $r_k$,
\begin{align}
(r_k-r_{k+1}) \widetilde\mu_n(k)\widetilde\mu_n(k+1) 
 = \left(\sum_{\substack{w\sim k \\ w_1=0}}\mu_n(w)\right) \widetilde\mu_n(k+1)
- \left(\sum_{\substack{w\sim k+1 \\ w_1=0}}\mu_n(w)\right) \widetilde\mu_n(k).
\label{eq:two-state-cross-product-first}
\end{align}
For each $\ell$, decompose the total mass
\[
\widetilde\mu_n(\ell)
=
\sum_{\substack{u\sim \ell \\ u_1=0}}\mu_n(u)
+
\sum_{\substack{u\sim \ell \\ u_1=1}}\mu_n(u).
\]
Substituting these decompositions into
\eqref{eq:two-state-cross-product-first}, the terms in which both paths start from $0$ cancel. We therefore obtain
\begin{align}\label{eq:two-state-cross-product}
(r_k-r_{k+1}) \widetilde\mu_n(k)\widetilde\mu_n(k+1)
 = \sum_{\substack{w\sim k,\; v\sim k+1 \\ w_1=0,\; v_1=1}}
\mu_n(w)\mu_n(v)
- \sum_{\substack{w\sim k+1,\; v\sim k \\ w_1=0,\; v_1=1}}
\mu_n(w)\mu_n(v).
\end{align}

We now partition the pairs in the first sum according to whether they belong to $\mathcal A_k$. Their admissible contribution is
\[
\sum_{(w,v)\in\mathcal A_k}\mu_n(w)\mu_n(v),
\]
which is the right-hand side of
\eqref{eq:two-state-reflection-identity}. It remains to prove that the total weight of the non-admissible pairs in the first sum is exactly the second sum in
\eqref{eq:two-state-cross-product}. We establish this equality by constructing a weight-preserving bijection between the two corresponding classes of pairs.

Fix \(w\sim k\) and \(v\sim k+1\) such that
\[
w_1=0,
\qquad
v_1=1,
\qquad
(w,v)\notin\mathcal A_k.
\]
Since
\[
\sum_{i=2}^n(w_i-v_i)=0,
\]
the failure of the inequalities in \eqref{eq:two-state-admissible-pairs} is equivalent to the existence of an index \(t\) such that
\[
\sum_{i=t+1}^n(w_i-v_i)>0.
\]
Let \(j\) be the largest such index. Maximality gives
\begin{equation}\label{eq:two-state-last-positive-suffix}
w_{j+1}=1,
\qquad
v_{j+1}=0,
\qquad
\sum_{i=j+2}^n w_i=\sum_{i=j+2}^n v_i.
\end{equation}
Define
\begin{align*}
\widehat w
&:=(w_1,v_j,v_{j-1},\ldots,v_2,w_{j+1},\ldots,w_n),\\
\widehat v
&:=(v_1,w_j,w_{j-1},\ldots,w_2,v_{j+1},\ldots,v_n).
\end{align*}
We first verify that the image pair belongs to the class indexed by the second sum in \eqref{eq:two-state-cross-product}. Using \eqref{eq:two-state-last-positive-suffix} and
\[
\sum_{i=1}^n(w_i-v_i)=-1,
\qquad
w_1-v_1=-1,
\]
we obtain
\[
\sum_{i=2}^j(w_i-v_i)=-1.
\]
Consequently,
\[
\widehat w\sim k+1,
\qquad
\widehat v\sim k,
\qquad
\widehat w_1=0,
\qquad
\widehat v_1=1.
\]

We next verify that the transformation preserves the weight assigned to each pair:
\begin{equation}\label{eq:two-state-reflection-weight}
\mu_n(w)\mu_n(v)
=
\mu_n(\widehat w)\mu_n(\widehat v).
\end{equation}
Indeed, write
\[
T
:=
\prod_{i=j+1}^{n-1}
p(w_i,w_{i+1})p(v_i,v_{i+1}),
\]
with the convention that $T=1$ when $j=n-1$. Since
\[
(w_1,v_1)=(0,1),
\qquad
(w_{j+1},v_{j+1})=(1,0),
\]
reversibility of $P$ gives
\begin{align*}
\mu_n(w)\mu_n(v)
&=
\pi(w_1)\pi(v_1)
\prod_{i=1}^{j}
p(w_i,w_{i+1})p(v_i,v_{i+1}) \cdot T
\\
&=
\pi(w_{j+1})\pi(v_{j+1})
\prod_{i=1}^{j}
p(w_{i+1},w_i)p(v_{i+1},v_i) \cdot T.
\end{align*}
By the definition of $\widehat w$ and $\widehat v$,
\[
(\widehat w_1,\ldots,\widehat w_{j+1})
=
(v_{j+1},v_j,\ldots,v_1),
\]
and
\[
(\widehat v_1,\ldots,\widehat v_{j+1})
=
(w_{j+1},w_j,\ldots,w_1).
\]
Moreover, the two paths are unchanged after coordinate $j+1$. Therefore,
\begin{align*}
\mu_n(w)\mu_n(v)
&=
\pi(\widehat w_1)\pi(\widehat v_1)
\prod_{i=1}^{j}
p(\widehat w_i,\widehat w_{i+1})
p(\widehat v_i,\widehat v_{i+1}) \cdot T
\\
&=
\mu_n(\widehat w)\mu_n(\widehat v),
\end{align*}
which proves \eqref{eq:two-state-reflection-weight}.

It remains to show that the preceding transformation is a bijection from the non-admissible pairs in the first sum of \eqref{eq:two-state-cross-product} onto the pairs in the second sum.

We first prove injectivity. It is enough to show that the cutting index $j$ can be recovered uniquely from the image pair $(\widehat w,\widehat v)$. By construction, the coordinates after $j+1$ are unchanged:
\[
(\widehat w_i,\widehat v_i)=(w_i,v_i), \qquad i\ge j+1.
\]
Hence $j$ is still the largest index $t$ such that
\[
\sum_{i=t+1}^n \widehat w_i > \sum_{i=t+1}^n \widehat v_i.
\]
Thus $j$ is determined by $(\widehat w,\widehat v)$. Once $j$ is known, reversing the first $j+1$ coordinates and exchanging the two paths recovers $(w,v)$. Therefore the transformation is injective.

We next prove surjectivity. Let $(\widehat w,\widehat v)$ be an arbitrary pair appearing in the second sum of \eqref{eq:two-state-cross-product}; that is,
\[
\widehat w\sim k+1, \qquad \widehat v\sim k, \qquad \widehat w_1=0, \qquad \widehat v_1=1.
\]
Since the two paths contain $k+1$ and $k$ ones, respectively, while their first coordinates are $0$ and $1$, we have
\[
\sum_{i=2}^n \widehat w_i > \sum_{i=2}^n \widehat v_i.
\]
Consequently, there exists at least one index $t\in\{1,\ldots,n-1\}$ such that
\[
\sum_{i=t+1}^n \widehat w_i > \sum_{i=t+1}^n \widehat v_i.
\]
Let $j$ be the largest such index, and define
\[
w = (\widehat w_1,\widehat v_j,\ldots,\widehat v_2, \widehat w_{j+1},\ldots,\widehat w_n),
\]
and
\[
v = (\widehat v_1,\widehat w_j,\ldots,\widehat w_2, \widehat v_{j+1},\ldots,\widehat v_n).
\]
This is precisely the inverse reversal-and-exchange operation. The choice of $j$ implies that
\[
w\sim k, \qquad v\sim k+1, \qquad w_1=0, \qquad v_1=1,
\]
and that $(w,v)\notin\mathcal A_k$. Applying the original transformation to $(w,v)$ returns $(\widehat w,\widehat v)$. Hence every pair in the second sum has a preimage, and the transformation is surjective.

We have therefore constructed a weight-preserving bijection between the non-admissible pairs in the first sum and all pairs in the second sum. The two non-admissible contributions in \eqref{eq:two-state-cross-product} cancel, proving \eqref{eq:two-state-reflection-identity}.

It remains to estimate the weight of \(\mathcal A_k\). For \(1\le s\le n\), use the cyclic rotation
\[
\theta_s(u_1,\ldots,u_n)
:=(u_s,u_{s+1},\ldots,u_n,u_1,\ldots,u_{s-1}),
\]
which is the same rotation used in the proof of Proposition~\ref{prop:coordinate-oscillation-dirichlet-bound}. For a pair \(w\sim k\), \(v\sim k+1\), put
\[
a_i:=w_i-v_i,
\qquad
S_0:=0,
\qquad
S_t:=\sum_{i=1}^t a_i.
\]
Since \(S_n=-1\), let \(\tau\) be the smallest index at which \(S_t\) attains its minimum over \(1\le t\le n\). The increments \(a_i\) belong to \(\{-1,0,1\}\), so
\[
a_\tau=-1,
\qquad
S_{\tau-1}=S_\tau+1.
\]
Moreover,
\[
S_t\ge S_\tau\quad(t\ge\tau),
\qquad
S_t\ge S_\tau+1=S_{\tau-1}\quad(t<\tau),
\]
where the second inequality uses the choice of the first minimizer. These inequalities are exactly the prefix inequalities in \eqref{eq:two-state-admissible-pairs} for the rotated pair, and hence
\[
(\theta_\tau w,\theta_\tau v)\in\mathcal A_k.
\]
Conversely, the same prefix inequalities show that if \((\theta_s w,\theta_s v)\in\mathcal A_k\), then \(s\) is the smallest index at which \(S_t\) attains its minimum. Thus exactly one of the \(n\) simultaneously rotated pairs
\[
(\theta_s w,\theta_s v),
\qquad 1\le s\le n,
\]
belongs to \(\mathcal A_k\). This is the elementary cycle lemma in the present form.

We next compare the weights of cyclic rotations. For \(u\in\Omega^n\) and \(2\le s\le n\), cancellation in the open-path density gives
\begin{equation}\label{eq:two-state-binary-rotation-ratio}
\frac{\mu_n(\theta_su)}{\mu_n(u)}
=
\frac{\pi(u_s)p(u_n,u_1)}{\pi(u_1)p(u_{s-1},u_s)}.
\end{equation}
Using \eqref{eq:two-state-pi}, we can rewrite the right-hand side as
\[
\frac{p(u_n,u_1)}{p(1-u_1,u_1)}
\frac{p(1-u_s,u_s)}{p(u_{s-1},u_s)}.
\]
Each factor compares two transition probabilities with the same terminal state. By \eqref{eq:two-state-alpha},
\begin{equation}\label{eq:two-state-binary-rotation-comparison}
\alpha_P^2
\le
\frac{\mu_n(\theta_su)}{\mu_n(u)}
\le
\alpha_P^{-2}.
\end{equation}
The same bounds are trivial for \(s=1\). Hence, for every pair \((w,v)\),
\begin{equation}\label{eq:two-state-pair-rotation-comparison}
\alpha_P^4
\le
\frac{\mu_n(\theta_sw)\mu_n(\theta_sv)}{\mu_n(w)\mu_n(v)}
\le
\alpha_P^{-4}.
\end{equation}

The uniqueness of the admissible rotation gives the exact double-counting identity
\begin{align}\label{eq:two-state-cycle-double-count}
\widetilde\mu_n(k)\widetilde\mu_n(k+1)
=
\sum_{s=1}^n
\sum_{(w,v)\in\mathcal A_k}
\mu_n(\theta_s^{-1}w)\mu_n(\theta_s^{-1}v).
\end{align}
Applying \eqref{eq:two-state-pair-rotation-comparison} to \eqref{eq:two-state-cycle-double-count}, we obtain
\begin{equation}\label{eq:two-state-admissible-weight-bounds}
\frac{\alpha_P^4}{n}
\widetilde\mu_n(k)\widetilde\mu_n(k+1)
\le
\sum_{(w,v)\in\mathcal A_k}\mu_n(w)\mu_n(v)
\le
\frac{\alpha_P^{-4}}{n}
\widetilde\mu_n(k)\widetilde\mu_n(k+1).
\end{equation}
Combining \eqref{eq:two-state-reflection-identity} and \eqref{eq:two-state-admissible-weight-bounds} proves \eqref{eq:two-state-endpoint-monotonicity}.
\end{proof}

\subsection{Proof of the two-state comparison}

\begin{proof}[Proof of Theorem~\ref{thm:two-state-gap}]
Let \(U=(U_1,\ldots,U_n)\) have law \(\mu_n\), and, conditional on \(U\), let \(Z\) be sampled from \(P(U_n,\cdot)\). Since one transition of the count chain changes the number of ones by \(Z-U_1\), for every \(0\le k\le n\),
\begin{equation}\label{eq:two-state-drift-expectation}
\widetilde P_n(k,k+1)-\widetilde P_n(k,k-1)
=
\mathbb E[Z-U_1\mid N_1(U)=k].
\end{equation}
Here and below, we use the boundary convention
\[
\widetilde P_n(0,-1)=\widetilde P_n(n,n+1)=0.
\]

We next compute the conditional expectation on the right-hand side. Conditional on the hidden path $U$, the appended state $Z$ is sampled from $P(U_n,\cdot)$. Thus
\begin{align}
\mathbb E[Z\mid N_1(U)=k]
&=
\mathbb E\left[
\mathbb E[Z\mid U]
\;\middle|\;
N_1(U)=k
\right]
\nonumber\\
&=
\mathbb E\left[
p(U_n,1)
\;\middle|\;
N_1(U)=k
\right].
\label{eq:two-state-conditional-Z}
\end{align}
By \eqref{eq:two-state-base-gap},
\[
p(x,1)
=
p(0,1)
+
\bigl(1-\operatorname{Gap}(P)\bigr)x,
\qquad x\in\{0,1\}.
\]

Reversibility of the stationary path law implies that, conditional on $N_1(U)=k$, the two endpoints $U_1$ and $U_n$ have the same distribution. By the definition of $r_k$,
\[
\mathbb E[U_n\mid N_1(U)=k]
=
\mathbb E[U_1\mid N_1(U)=k]
=
\mathbb P(U_1=1\mid N_1(U)=k)
=
1-r_k.
\]
Substituting this identity into
\eqref{eq:two-state-conditional-Z} gives
\[
\mathbb E[Z\mid N_1(U)=k]
=
p(0,1)
+
\bigl(1-\operatorname{Gap}(P)\bigr)(1-r_k).
\]
Then \eqref{eq:two-state-drift-expectation} yields
\begin{align}
\widetilde P_n(k,k+1)-\widetilde P_n(k,k-1)
&=
p(0,1)
+
\bigl(1-\operatorname{Gap}(P)\bigr)(1-r_k)
-
(1-r_k)
\nonumber\\
&=
\operatorname{Gap}(P)r_k-p(1,0).
\label{eq:two-state-drift}
\end{align}

Consequently, for every \(0\le k<n\),
\begin{align*}
&\widetilde P_n(k,k+1)-\widetilde P_n(k,k-1)
-\widetilde P_n(k+1,k+2)+\widetilde P_n(k+1,k)\\
&\hspace{20mm}=\operatorname{Gap}(P)(r_k-r_{k+1})
\ge
\alpha_P^4\frac{\operatorname{Gap}(P)}{n},
\end{align*}
where the last inequality is Proposition~\ref{prop:two-state-endpoint-monotonicity}. Since \(\widetilde P_n\) is a reversible birth--death kernel, Lemma~\ref{lem:two-state-birth-death-curvature} gives
\[
\operatorname{Gap}(\widetilde P_n)
\ge
\alpha_P^4\frac{\operatorname{Gap}(P)}{n}.
\]
\end{proof}

\section{Matrix concentration from the count-space Poincaré inequality}
\label{sec:matrix-concentration}

In this section, we study applications of the count-space Poincar\'e
inequality for matrix-valued empirical averages.
Applying the scalar Poincaré inequality to a fixed matrix entry or a fixed quadratic form gives only scalar, directionwise control. Our purpose is to obtain simultaneous operator-norm control by combining the count-space Poincaré inequality with a general matrix-concentration principle.

Let $\mathbb{H}_d$ denote the real vector space of $d\times d$ Hermitian matrices,
equipped with the $\ell_2$ operator norm $\|\cdot\|$.  For a function
$g:\Omega\to\mathbb{H}_d$, write
\[
    \pi(g):=\sum_{x\in\Omega}\pi(x)g(x)\in\mathbb{H}_d.
\]
The next result converts the scalar Poincar\'e inequality for the resampled count
kernel into operator-norm concentration for a matrix-valued empirical average.

\begin{theorem}[Matrix concentration for a stationary path segment]
\label{thm:matrix-empirical-average}
Let $P$ be an irreducible Markov kernel on the finite state space $\Omega$ that is
reversible with respect to $\pi$, and fix $n\ge 2$.  Let
$U=(U_1,\ldots,U_n)$ have law $\mu_n$.  If
$g:\Omega\to\mathbb{H}_d$ satisfies
\[
    \max_{x\in\Omega}\|g(x)\|\le 1,
\]
then, for every $t>0$,
\begin{equation}
\label{eq:matrix-tail-gap}
\mathbb{P}_{\mu_n}\left(
    \left\|\frac{1}{n}\sum_{i=1}^n g(U_i)-\pi(g)\right\|\ge t
\right)
\le
6d\exp\left(
    -n\sqrt{\frac{\operatorname{Gap}(\widetilde P_n)}{2}}\,t
\right).
\end{equation}
Moreover,
\begin{equation}
\label{eq:matrix-mean-gap}
\mathbb{E}_{\mu_n}
\left\|\frac{1}{n}\sum_{i=1}^n g(U_i)-\pi(g)\right\|
\le
\frac{\sqrt{2}\,\log(6\mathrm{e}d)}
     {n\sqrt{\operatorname{Gap}(\widetilde P_n)}}.
\end{equation}
\end{theorem}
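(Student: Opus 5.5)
Since the random matrix $\frac1n\sum_i g(U_i)$ depends on $U$ only through its count vector, we work on the count space. Define the matrix-valued function $\Phi:\Om_{n,+}^{\#}\to\mathbb H_d$ by
\[
\Phi(\eta):=\frac1n\sum_{x\in\Om}\eta_x\,g(x)-\pi(g),
\]
so that $\Phi(N(U))=\frac1n\sum_i g(U_i)-\pi(g)$. Its law under $\mu_n$ equals the law of $\Phi$ under $\tmun$, and by stationarity $\tmun(\Phi)=0$. We then invoke the Huang--Tropp matrix Poincar\'e principle~\cite{huang2021poincare}. For a reversible Markov kernel $K$ with stationary law $\rho$ and spectral gap $\Gap(K)$, it states that any $\Phi$ with $\rho(\Phi)=0$ obeys a tail bound of the form
\[
\Pbb(\|\Phi\|\ge t)\le 6d\exp\bigl(-t\sqrt{\Gap(K)}/\sqrt{v}\bigr),
\]
together with $\E\|\Phi\|\le\sqrt{v/\Gap(K)}\,\log(6\mathrm e d)$. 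Here $v$ bounds the matrix variance proxy (carr\'e du champ)
\[
\Gamma(\Phi)(\eta)=\frac12\sum_\xi K(\eta,\xi)\bigl(\Phi(\xi)-\Phi(\eta)\bigr)^2
\]
in the semidefinite order, uniformly in $\eta$. The argument uses $K=\tPn$ and $\rho=\tmun$, which is reversible by Proposition~\ref{prop:count-kernel-structure}. The scalar Poincar\'e inequality with constant $1/\Gap(\tPn)$ is exactly the input this principle requires, so no positivity of $P$ is needed.

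The key step is to bound $\Gamma(\Phi)$ uniformly. Every transition of $\tPn$ has the form $\xi=\eta-e_{u_1}+e_z$, and for such a move
\[
\Phi(\xi)-\Phi(\eta)=\frac1n\bigl(g(z)-g(u_1)\bigr).
\]
Its square has operator norm at most $(2/n)^2=4/n^2$, because $\|g\|\le1$. Since $\tPn(\eta,\cdot)$ is a probability measure, this gives
\[
\Gamma(\Phi)(\eta)\preceq\frac12\cdot\frac4{n^2}I=\frac{2}{n^2}I
\qquad\text{for every }\eta .
\]
We therefore take $v=2/n^2$.

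Substituting $v=2/n^2$ into the tail bound gives the exponent
\[
t\sqrt{\Gap(\tPn)}\cdot\frac{n}{\sqrt2}=n\sqrt{\frac{\Gap(\tPn)}{2}}\;t,
\]
which is \eqref{eq:matrix-tail-gap}. The expectation bound gives
\[
\sqrt{\frac{2}{n^2\,\Gap(\tPn)}}\;\log(6\mathrm e d)=\frac{\sqrt2\,\log(6\mathrm e d)}{n\sqrt{\Gap(\tPn)}},
\]
which is \eqref{eq:matrix-mean-gap}. Alternatively, \eqref{eq:matrix-mean-gap} follows by integrating the tail bound over $t$, splitting at $t_0=\log(6d)/(n\sqrt{\Gap(\tPn)/2})$.

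The main obstacle is matching the precise normalization of the Huang--Tropp theorem. Conventions for the carr\'e du champ (with or without the factor $\tfrac12$) and for the gap (continuous-time generator $\tPn-I$ versus the discrete kernel) affect the constants. The plan is to use the continuous-time Markov semigroup $e^{t(\tPn-I)}$. Its Dirichlet form coincides with $\cE_{\tPn}$, and its generator has the same spectral gap, so the Huang--Tropp hypotheses (reversibility, ergodicity, and a Poincar\'e constant) hold verbatim. Once that is settled, the constants $6d$, $\sqrt{2}$ and $\log(6\mathrm e d)$ follow directly from their statement with $v=2/n^2$.
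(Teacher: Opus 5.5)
Your proposal is correct and follows the paper's proof. Like the paper, you lift to the count observable $\frac1n\sum_x\eta_x g(x)$ and use the Poissonized semigroup $e^{t(\widetilde P_n-I)}$, whose Poincar\'e constant is $1/\Gap(\widetilde P_n)$. You then bound the matrix carr\'e du champ by $2/n^2$ and apply Huang--Tropp's Theorem~2.7 with $v=2/n^2$. Your alternative derivation of the expectation bound is also a useful independent check: integrating the tail bound with the split at $t_0=\log(6d)/(n\sqrt{\Gap(\widetilde P_n)/2})$ reproduces exactly $\sqrt{2}\log(6\mathrm{e}d)/(n\sqrt{\Gap(\widetilde P_n)})$.
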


\begin{proof}
Consider the continuous-time Markov semigroup on $\Omega_{n,+}^{\#}$ given by
\begin{equation}
\label{eq:poissonized-count-semigroup}
    \widehat P_t:=\exp\bigl(t(\widetilde P_n-I)\bigr),
    \qquad t\ge 0.
\end{equation}
Since $\widetilde P_n$ is irreducible and reversible with respect to
$\widetilde\mu_n$, the semigroup in
\eqref{eq:poissonized-count-semigroup} is ergodic and reversible with the same
stationary measure.  Its scalar Dirichlet form is $\mathcal E_{\widetilde P_n}$.
Therefore, by the variational definition of the spectral gap, every scalar
function $h:\Omega_{n,+}^{\#}\to\mathbb R$ satisfies
\begin{equation}
\label{eq:poissonized-count-poincare}
    \operatorname{Var}_{\widetilde\mu_n}(h)
    \le
    \frac{1}{\operatorname{Gap}(\widetilde P_n)}
    \mathcal E_{\widetilde P_n}(h,h).
\end{equation}

Define the matrix-valued count observable
\begin{equation}
\label{eq:matrix-count-observable}
    \bm F_g(\eta):=\frac{1}{n}\sum_{x\in\Omega}\eta_x g(x),
    \qquad \eta\in\Omega_{n,+}^{\#}.
\end{equation}
Because $\widetilde\mu_n$ is the push-forward of $\mu_n$ under $N$ and each
coordinate of a path with law $\mu_n$ has marginal distribution $\pi$, we have
\begin{equation}
\label{eq:matrix-count-mean}
    \mathbb E_{\widetilde\mu_n}\bm F_g
    =\mathbb E_{\mu_n}\bm F_g(N(U))
    =\frac{1}{n}\sum_{i=1}^n\mathbb E_{\mu_n}g(U_i)
    =\pi(g).
\end{equation}

The matrix carr\'e du champ associated with the semigroup
\eqref{eq:poissonized-count-semigroup} is
\begin{equation}
\label{eq:matrix-carre-du-champ}
\bm\Gamma_n(\bm F_g)(\eta)
:=\frac{1}{2}\sum_{\xi\in\Omega_{n,+}^{\#}}
\widetilde P_n(\eta,\xi)
\bigl(\bm F_g(\xi)-\bm F_g(\eta)\bigr)^2.
\end{equation}
Every nontrivial transition of $\widetilde P_n$ replaces one count with another.
Thus, whenever $\widetilde P_n(\eta,\xi)>0$ and $\eta\ne\xi$, there are
$x,z\in\Omega$ such that $\xi=\eta-e_x+e_z$, and hence
\[
    \bm F_g(\xi)-\bm F_g(\eta)=\frac{1}{n}\bigl(g(z)-g(x)\bigr).
\]
Since $g(z)-g(x)$ is Hermitian and its norm is at most $2$, it follows from
\eqref{eq:matrix-carre-du-champ} that
\begin{equation} \label{eq:matrix-carre-du-champ-bound}
\left\|\bm\Gamma_n(\bm F_g)(\eta)\right\|
\le \frac{1}{2}\sum_{\xi\in\Omega_{n,+}^{\#}}
\widetilde P_n(\eta,\xi)
\left\|\bm F_g(\xi)-\bm F_g(\eta)\right\|^2 
 \le \frac{2}{n^2}.
\end{equation}

Apply Theorem~2.7 of \cite{huang2021poincare} to the semigroup
\eqref{eq:poissonized-count-semigroup}.  In view of
\eqref{eq:poissonized-count-poincare} and
\eqref{eq:matrix-carre-du-champ-bound}, its Poincar\'e constant and variance proxy
may be taken to be
\[
    \alpha=\frac{1}{\operatorname{Gap}(\widetilde P_n)},
    \qquad
    v_{\bm F_g}\le \frac{2}{n^2}.
\]
Consequently, for every $\lambda>0$,
\begin{equation}
\label{eq:huang-tropp-applied}
\mathbb P_{\widetilde\mu_n}\left(
    \left\|\bm F_g-\mathbb E_{\widetilde\mu_n}\bm F_g\right\|
    \ge
    \sqrt{\frac{2}{n^2\operatorname{Gap}(\widetilde P_n)}}\,\lambda
\right)
\le 6d\exp(-\lambda).
\end{equation}
Set
$\lambda=n\sqrt{\operatorname{Gap}(\widetilde P_n)/2}\,t$ in
\eqref{eq:huang-tropp-applied}.  Using
\eqref{eq:matrix-count-observable} and \eqref{eq:matrix-count-mean}, together
with the push-forward relation between $\mu_n$ and $\widetilde\mu_n$, gives
\eqref{eq:matrix-tail-gap}.  The expectation estimate in Theorem~2.7 of
\cite{huang2021poincare}, with the same values of $\alpha$ and
$v_{\bm F_g}$, gives \eqref{eq:matrix-mean-gap}.
\end{proof}

\begin{corollary}[Explicit bound under strict positivity]
\label{cor:matrix-empirical-average-positive}
Assume in addition that
$p_*:=\min_{x,y\in\Omega}p(x,y)>0$, and set
\begin{equation}
\label{eq:kappa-for-matrix-concentration}
    \kappa(P):=
    \frac{p_*^4\bigl(1-\sqrt{\delta(P)}\bigr)^2}{4}.
\end{equation}
Then, under the assumptions of Theorem~\ref{thm:matrix-empirical-average},
\begin{equation}
\label{eq:matrix-tail-explicit}
\mathbb{P}_{\mu_n}\left(
    \left\|\frac{1}{n}\sum_{i=1}^n g(U_i)-\pi(g)\right\|\ge t
\right)
\le
6d\exp\left(-\sqrt{\frac{\kappa(P)n}{2}}\,t\right),
\qquad t>0,
\end{equation}
and
\begin{equation}
\label{eq:matrix-mean-explicit}
\mathbb{E}_{\mu_n}
\left\|\frac{1}{n}\sum_{i=1}^n g(U_i)-\pi(g)\right\|
\le
\log(6\mathrm{e}d)\sqrt{\frac{2}{\kappa(P)n}}.
\end{equation}
\end{corollary}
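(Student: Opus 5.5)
The corollary follows by substituting the lower bound of Theorem~\ref{thm:main-gap-lower-bound} into Theorem~\ref{thm:matrix-empirical-average}. Under strict positivity, Theorem~\ref{thm:main-gap-lower-bound} gives
\[
\Gap(\tPn)\ge \frac{\kappa(P)}{n},
\]
with $\kappa(P)$ as in \eqref{eq:kappa-for-matrix-concentration}. Irreducibility and reversibility are automatic under $p_*>0$, so the hypotheses of Theorem~\ref{thm:matrix-empirical-average} hold.

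For the tail bound \eqref{eq:matrix-tail-explicit}, I would use that the right-hand side of \eqref{eq:matrix-tail-gap} is nonincreasing in $\Gap(\tPn)$. It therefore suffices to replace the gap by its lower bound. The exponent then satisfies
\[
n\sqrt{\frac{\Gap(\tPn)}{2}}\,t \;\ge\; n\sqrt{\frac{\kappa(P)}{2n}}\,t \;=\; \sqrt{\frac{\kappa(P)\,n}{2}}\;t .
\]
Since $t>0$ and $\exp(-\,\cdot\,)$ is decreasing, this yields \eqref{eq:matrix-tail-explicit}.

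For the expectation bound \eqref{eq:matrix-mean-explicit}, the right-hand side of \eqref{eq:matrix-mean-gap} is decreasing in $\Gap(\tPn)$. Replacing the gap by its lower bound gives
\[
\frac{\sqrt2\,\log(6\mathrm e d)}{n\sqrt{\kappa(P)/n}}
\;=\;
\log(6\mathrm e d)\,\frac{\sqrt2}{\sqrt{\kappa(P)\,n}}
\;=\;
\log(6\mathrm e d)\sqrt{\frac{2}{\kappa(P)\,n}} .
\]
This is exactly \eqref{eq:matrix-mean-explicit}.

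There is no genuine obstacle here. The only points needing care are confirming the monotone direction of each bound in the gap, which is what justifies the substitution, and simplifying $n\sqrt{\kappa/n}=\sqrt{\kappa n}$ correctly.
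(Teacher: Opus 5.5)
Your proof is correct and matches the paper's argument: you substitute $\Gap(\tPn)\ge\kappa(P)/n$ from Theorem~\ref{thm:main-gap-lower-bound} into \eqref{eq:matrix-tail-gap} and \eqref{eq:matrix-mean-gap}, using that both right-hand sides are nonincreasing in the gap. One small slip is that strict positivity gives irreducibility but not reversibility; reversibility is instead part of the standing hypotheses of Theorem~\ref{thm:matrix-empirical-average}, so the argument is unaffected.
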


\begin{proof}
Theorem~\ref{thm:main-gap-lower-bound} gives
\[
    \operatorname{Gap}(\widetilde P_n)\ge \frac{\kappa(P)}{n}.
\]
Substituting this estimate into \eqref{eq:matrix-tail-gap} and
\eqref{eq:matrix-mean-gap} proves \eqref{eq:matrix-tail-explicit} and
\eqref{eq:matrix-mean-explicit}.
\end{proof}

\begin{remark}

The semigroup in \eqref{eq:poissonized-count-semigroup} is used only as an auxiliary
reversible dynamics with stationary law $\widetilde{\mu}_n$; it is not the
multi-step dynamics of the original sliding-window count process.
The resulting bound is subexponential in the deviation parameter
\cite{huang2021poincare}. In the overlapping setting, direct concentration
inequalities for additive matrix functionals of Markov chains yield stronger
Hoeffding- or Bernstein-type tails \cite{neeman2024concentration}.
Thus, the purpose of this section is to illustrate a consequence of the
count-space Poincaré inequality, rather than to obtain an optimal matrix
concentration bound; moreover, Corollary~\ref{cor:matrix-empirical-average-positive} inherits the possibly nonsharp
dependence on $P$ from Theorem~\ref{thm:main-gap-lower-bound}.

\end{remark}

\section{The general spectral-gap comparison problem}
\label{sec:comparison-problem}

Under the strict positivity assumption, the preceding sections determine the
dependence of $\operatorname{Gap}(\widetilde P_n)$ on the window length for
each fixed underlying kernel. The remaining question is whether the dependence
on $P$ can be described universally through $\operatorname{Gap}(P)$, without
requiring all transition probabilities to be positive.


\subsection{What is proved under strict positivity}
\label{subsec:positive-kernel-results}

For every fixed strictly positive reversible kernel, the lower bound of
Theorem~\ref{thm:main-gap-lower-bound} and the upper bounds of Section~\ref{sec:upper-bounds}
determine the correct order in the window length.

\begin{corollary}[Matching order for fixed positive kernels]
\label{cor:matching-order}
Let $P$ be a reversible Markov kernel on a finite state space $\Omega$ with
$|\Omega|\geq 2$, and assume that
\[
    p_*:=\min_{x,y\in\Omega}p(x,y)>0.
\]
Then there exist constants
\[
    0<c_-(P)\leq c_+(P)<\infty
\]
such that, for every $n\geq 2$,
\begin{equation}
    \frac{c_-(P)}{n}
    \leq
    \operatorname{Gap}(\widetilde P_n)
    \leq
    \frac{c_+(P)}{n}.
    \label{eq:matching-order}
\end{equation}
In particular,
\begin{equation}
    \operatorname{Gap}(\widetilde P_n)
    =
    \Theta_P(n^{-1}).
    \label{eq:theta-fixed-P}
\end{equation}
\end{corollary}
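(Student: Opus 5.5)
The lower bound is read off from Theorem~\ref{thm:main-gap-lower-bound}. We take
\[
c_-(P):=\frac{p_*^4\bigl(1-\sqrt{\delta(P)}\bigr)^2}{4}.
\]
This constant is strictly positive because $p_*>0$ and, by \eqref{eq:positive-dobrushin-bound}, $\delta(P)\le 1-mp_*<1$. It depends only on $P$, and \eqref{eq:main-gap-bound} gives $\Gap(\tPn)\ge c_-(P)/n$ for every $n\ge2$.

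For the upper bound we use Corollary~\ref{cor:simplified-upper-bound}, whose hypothesis is $\Gap(P)<2$, i.e.\ $\lambda_2(P)>-1$. Strict positivity makes $P$ aperiodic and irreducible, so $-1$ is not an eigenvalue. Hence every eigenvalue $\lambda\ne1$ lies in $(-1,1)$. Moreover $m\ge2$ guarantees that $\lambda_2(P)$ exists, so $\lambda_2(P)\in(-1,1)$. More concretely, if $Pf=-f$ with $f\ne0$, then taking $x$ with $|f(x)|$ maximal gives $-f(x)=\sum_y p(x,y)f(y)$ with all weights at least $p_*>0$. This forces $f(y)=-f(x)$ for all $y$, including $y=x$, so $f\equiv0$, a contradiction. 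The corollary then yields
\[
\Gap(\tPn)\le \frac{c_+^{(0)}(P)}{n},
\qquad
c_+^{(0)}(P):=\max\Bigl\{\Gap(P),\ \frac{\Gap(P)}{2-\Gap(P)}\Bigr\},
\]
valid for all $n\ge2$. Alternatively one can apply Theorem~\ref{thm:explicit-upper-bound} with $\lambda=\lambda_2$ directly.

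The statement also requires $c_-(P)\le c_+(P)$. We set $c_+(P):=\max\{c_+^{(0)}(P),c_-(P)\}$. This choice is harmless, and the two inequalities above already force $c_-\le c_+^{(0)}$ anyway. The $\Theta_P(n^{-1})$ statement is then just a restatement of \eqref{eq:matching-order}.

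The only nonroutine point is verifying that $-1$ is not an eigenvalue, so that the upper-bound results apply. The positivity argument above handles it.
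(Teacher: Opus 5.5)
Your proof is correct and follows the same route as the paper: the lower bound comes from Theorem~\ref{thm:main-gap-lower-bound}, and the upper bound from Corollary~\ref{cor:simplified-upper-bound} once strict positivity is used to check $\Gap(P)<2$. Your maximum-principle argument ruling out the eigenvalue $-1$, and your explicit choice of $c_+(P)$ ensuring $c_-(P)\le c_+(P)$, spell out details that the paper states in a single line.
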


\begin{proof}
The lower bound follows from
Theorem~\ref{thm:main-gap-lower-bound}. Strict positivity implies that $P$ is
primitive and hence $\operatorname{Gap}(P)<2$. The upper bound then follows
from Corollary~\ref{cor:simplified-upper-bound}, with the resulting
$P$-dependent quantity absorbed into $c_+(P)$.
\end{proof}

Thus the dependence on $n$ is completely determined for each fixed positive
kernel. The remaining issue is the dependence of the constants on $P$.
The lower bound in Theorem~\ref{thm:main-gap-lower-bound} is expressed through
the local parameters $p_*$ and $\delta(P)$, whereas the upper bounds in
Section~\ref{sec:upper-bounds} are naturally expressed through the Poincar\'e spectral gap
$\operatorname{Gap}(P)$. This raises the possibility that the dependence on
$p_*$ and $\delta(P)$ reflects the present comparison argument rather than
the intrinsic scale of the count kernel.

\subsection{Why the present proof requires strict positivity}
\label{subsec:method-limitations}

Strict positivity enters both main steps in the proof of
Theorem~\ref{thm:main-gap-lower-bound}.

The first step is the path-space variance estimate
\eqref{eq:path-variance-bound}. Its coupling argument uses
$\delta(P)<1$ to obtain geometric decay of disagreement probabilities.
If zero transition probabilities are allowed, one may have $\delta(P)=1$
even when $P$ is irreducible and has a positive spectral gap. The same argument compares the distribution of an intermediate path with the
corresponding conditional Markov path law. At the interface between its two
pieces, one transition factor may be missing; strict positivity yields the
required pointwise domination at a cost of $p_*^{-1}$. When $p_*=0$, the
intermediate path may contain a forbidden transition, and this domination may
fail.

The second step is the coordinate-to-endpoint comparison
\eqref{eq:coordinate-oscillation-dirichlet-bound}. The pointwise oscillation
estimate applies Lemma~\ref{lem:reference-state-oscillation-bound} with
$q(z)=p(u_{i-1},z)$, and therefore requires a uniform positive lower bound on
the entries of this transition row. The density comparison under cyclic
rotation likewise uses strict positivity to compare
$\mu_n(\theta_i u)$ with $\mu_n(u)$. When transition probabilities vanish, a
cyclic rotation of a path with positive $\mu_n$-mass may have zero mass, so
these two estimates are not available in their present form.


Therefore, it remains open to establish a universal lower bound without the strict positivity condition. The proof would  require a support-sensitive comparison controlled by the global relaxation of $P$, rather than by the smallest positive transition
probability.

\section*{Acknowledgment}
The authors are grateful to Dr. De Huang, who initiated our studies for this work.   

\bibliographystyle{plain}
\bibliography{reference}

\end{document}